\theoremstyle{plain}
\newtheorem{theorem}{Theorem}
\newtheorem{lemma}[theorem]{Lemma}
\newtheorem{conjecture}[theorem]{Conjecture}
\newtheorem{corollary}[theorem]{Corollary}
\g@addto@macro\bfseries{\boldmath}
\def\eref#1{$(\ref{#1})$}
\def\sref#1{\S$\ref{#1}$}
\def\lref#1{Lemma~$\ref{#1}$}
\def\tref#1{Theorem~$\ref{#1}$}
\def\fref#1{Figure~$\ref{#1}$}
\def\cref#1{Conjecture~$\ref{#1}$}
\def\Cref#1{Corollary~$\ref{#1}$}
\renewcommand{\geq}{\geqslant}
\renewcommand{\leq}{\leqslant}
\renewcommand{\ge}{\geqslant}
\renewcommand{\le}{\leqslant}
\renewcommand{\emptyset}{\varnothing}
\def\Z{\mathbb{Z}}
\def\R{\mathbb{R}}
\def\O{\mathcal{O}}
\def\F{\mathcal{F}}
\def\P{\mathcal{P}}
\def\PP{\mathscr{P}} 
\def\Y{\mathcal{Y}}
\def\={\equiv}
\def\supp{\text{supp}}
\def\ind{\mathscr{S}}
\def\row{\mathscr{R}}
\def\col{\mathscr{C}}
\def\zz{\psi}
\title{Maximal sets of mutually orthogonal frequency squares}
\author{
Nicholas J. Cavenagh\thanks{
Department of Mathematics,
 The University of Waikato,
 Private Bag 3105,
 Hamilton 3240, New Zealand.}\\
\texttt{nickc@waikato.ac.nz}\\ 
\and
Adam Mammoliti
\thanks{
School of Mathematics, Monash University, Clayton 3800, Australia.}\\ 
\texttt{adam.mammoliti@monash.edu}\\ 
\and
Ian M. Wanless
\footnotemark[3]\\ 
\texttt{ian.wanless@monash.edu}\\
}
\begin{document}

\date{}
\maketitle

\begin{abstract}
A \emph{frequency square} is a square matrix in which each row and column is
a permutation of the same multiset of symbols.  A frequency square is
of \emph{type $(n;\lambda)$} if it contains $n/\lambda$ symbols,
each of which occurs $\lambda$
times per row and $\lambda$ times per column. In the case when
$\lambda=n/2$ we refer to the frequency square as \emph{binary}.  A 
set of \hbox{$k$-MOFS$(n;\lambda)$}
is a set of $k$ frequency squares of type $(n;\lambda)$ such that when
any two of the frequency squares are superimposed, each possible
ordered pair occurs equally often.

A set of $k$-maxMOFS$(n;\lambda)$ is a set of $k$-MOFS$(n;\lambda)$
that is not contained in any set of $(k+1)$-MOFS$(n;\lambda)$.  For
even $n$, let $\mu(n)$ be the smallest $k$ such that there exists a
set of $k$-maxMOFS$(n;n/2)$.  It was shown in \cite{BCMW} that
$\mu(n)=1$ if $n/2$ is odd and $\mu(n)>1$ if $n/2$ is even.  Extending
this result, we show that if $n/2$ is even, then $\mu(n)>2$.  Also, we
show that whenever $n$ is divisible by a particular function of $k$,
there does not exist a set of $k'$-maxMOFS$(n;n/2)$ for any $k'\le k$.
In particular, this means that $\limsup \mu(n)$ is unbounded.
Nevertheless we can construct infinite families of maximal binary MOFS
of fixed cardinality.
More generally, let $q=p^u$ be a prime power and let $p^v$ be the
highest power of $p$ that divides $n$. If $0\le v-uh<u/2$ for $h\ge1$ then we
show that there exists a set of $(q^h-1)^2/(q-1)$-{\rm maxMOFS}$(n;n/q)$.

\medskip

\noindent {\bf MSC 2010 Codes: 05B15}

\noindent {Keywords: Frequency square; MOFS; integral polytope; relation.}  
\end{abstract}

\section{Introduction}\label{s:intro}

Let $N(n)=\{0,1,\dots,n-1\}$.  In what follows, rows and columns of an
$m\times n$ array $L$ are indexed by $N(m)$ and $N(n)$, respectively,
and $L[i,j]$ denotes the entry in row $i$ and column $j$ of $L$.
A \emph{frequency square} $F$ of \emph{type}
$(n;\lambda_0,\lambda_1,\dots,\lambda_{m-1})$ is an $n\times n$ array
such that symbol $i$ occurs $\lambda_i$ times in each row and
$\lambda_i$ times in each column for each $i\in N(m)$; necessarily
$\sum_{i=0}^{m-1}\lambda_i=n$.  In the case where
$\lambda_0=\lambda_1=\dots=\lambda_{m-1}=\lambda$ we say that $F$ is of
type $(n;\lambda)$. If $\lambda=n/2$, then we refer to the
frequency square as {\em binary}.  A frequency square of type $(n;1)$
is a \emph{Latin square} of order~$n$.  Two frequency squares of type
$(n;\lambda_0,\lambda_1,\dots,\lambda_{m-1})$ are \emph{orthogonal} if
each ordered pair $(i,j)$ occurs $\lambda_i\lambda_j$ times when the
squares are superimposed.  A set of \emph{mutually orthogonal
  frequency squares} (MOFS) is a set of frequency squares in which
each pair of squares is orthogonal.  We use the notation
$k$-MOFS$(n;\lambda)$ to denote $k$ MOFS, each of type
$(n;\lambda)$.

Research into frequency squares focuses mainly on constructions of
sets of MOFS, motivated originally by problems in statistical experiment
design.  Hedayat, Raghavarao and Seiden~\cite{HRS} showed that the
maximum $k$ such that a set of $k$-MOFS$(n;n/m)$ exists is $k=(n-1)^2/(m-1)$;
such a set is called \emph{complete}.  We give a new explanation for
this result (\Cref{cor:newbie}). In the case when $m=n/\lambda>2$,
complete sets of MOFS of type $(n;\lambda)$ are only known to exist
when $n$ is a prime power~\cite{LM,LZD,Mav,street79}; a unified theory
for all known constructions is given in~\cite{JMM}.

A non-prime power result for $m=2$ is given by Federer~\cite{Fe} (see
also~\cite{street79}), who showed that if there exists a
Hadamard matrix of order $n$, then there exists a complete set of MOFS
of type $(n;n/2)$. Conversely, it is shown in \cite{BCMW} that there
does not exist a complete set of MOFS of type $(n;n/2)$ whenever
$n/2$ is odd.

Two sets of frequency squares are \emph{isomorphic} if one can be
obtained from the other by some sequence of the following operations:
\begin{itemize}[topsep=4pt,partopsep=0pt,itemsep=2pt,parsep=2pt]
  \item Applying the same permutation to the rows of all squares in the set.
  \item Applying the same permutation to the columns of all squares in the set.
  \item Transposing all squares in the set.
  \item Permuting the symbols in one of the squares.
  \item Permuting the squares within the set (in cases where we have imposed an
    order on the set).
\end{itemize}
Isomorphism is an equivalence relation and the equivalence classes it
induces are \emph{isomorphism classes}.

A set $\{F_1,F_2,\dots,F_k\}$ of $k$-MOFS$(n;\lambda)$ is said to be
\emph{maximal} if there does not exist a frequency square $F$ of type
$(n;\lambda)$ that is orthogonal to $F_i$ for each $1\leq i\leq k$.
If we wish to specify that a set of $k$-MOFS$(n;\lambda)$ is maximal,
then we may write $k$-maxMOFS$(n;\lambda)$.  Maintaining consistency
with Latin square terminology, a $1$-maxMOFS$(n;\lambda)$ is called a
\emph{bachelor} frequency square.

In \cite{BCMW} a number of existence and non-existence results are
given for sets of $k$-maxMOFS$(n;n/2)$, which we summarise in the next
two theorems.  Note that a set of $(n-1)^2$-MOFS$(n;n/2)$ is complete
and thus trivially maximal.

\begin{theorem}\label{t:rez1} 
There exists a set of $k$-{\rm maxMOFS}$(n;n/2)$ if:
\begin{itemize}
\item $k=1$ and $n\equiv2\pmod 4$ (furthermore such a frequency square
  is unique up to isomorphism);
\item $n=6$ and either $5\leq k\leq 15$ or $k=17$; or 
\item $k=5$, $n\equiv2\pmod 4$ and $n>2$.
\end{itemize}
\end{theorem}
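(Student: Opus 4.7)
The plan is to handle the three cases separately, as each requires distinct techniques. Throughout, I would pass to the $\pm 1$ formulation: associate to each binary frequency square $F$ of type $(n;n/2)$ the matrix $M_F = J - 2F$, where $J$ is the all-ones matrix. Then $M_F$ has zero row and column sums, and two binary frequency squares $F, G$ are orthogonal precisely when $\langle M_F, M_G\rangle = 0$ in the entrywise inner product.

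For case~(i), a bachelor is an $F$ for which the only $\pm 1$ matrix $N$ with vanishing marginals and $\langle M_F, N\rangle = 0$ is $N = \pm M_F$. I would show existence by exhibiting an explicit candidate --- for instance a suitably perturbed near-circulant with first row $(1,\dots,1,0,\dots,0)$ --- and rule out orthogonal mates by combining the row/column constraints with a parity argument driven by $n/2$ being odd (so that $n^2/4$ is odd, which tightly constrains the intersection patterns of any hypothetical mate with $F$). For the uniqueness clause I would isolate a combinatorial invariant, for example the multiset of pairwise row inner products of $M_F$ or the row-equivalence structure modulo complementation, that is forced to take a specific value whenever $F$ is bachelor, and then verify that the isomorphism group acts transitively on the squares realising that value.

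For case~(iii), write $n = 2m$ with $m$ odd. I would construct five MOFS explicitly, most likely via a blow-up from a Latin square of order $m$ together with a small family of its conjugates, and verify pairwise orthogonality directly. Maximality would then follow by showing that the linear system imposed by orthogonality with $F_1,\dots,F_5$ restricts any candidate $F_6$ to a low-dimensional affine subspace of the zero-marginal matrix space, and ruling out all $\pm 1$ points in that subspace via the same parity obstruction used in case~(i). Case~(ii) is finite: for $n=6$ an exhaustive computer enumeration of binary frequency squares followed by greedy extension searches should produce maximal MOFS sets of every claimed cardinality $k \in \{5,\dots,15\}\cup\{17\}$, together with a check that no analogous set of size $16$ arises.

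The hardest step is unquestionably the uniqueness clause in case~(i). Linear algebra alone will not suffice, because the affine space of zero-marginal $\pm 1$ matrices orthogonal to a given one is typically much larger than $\{\pm M_F\}$; success will hinge on finding a combinatorial invariant strong enough to collapse every bachelor into a single isomorphism orbit, coupled with a genuinely arithmetic argument exploiting that $n \equiv 2 \pmod 4$. A secondary difficulty is identifying the right algebraic template for case~(iii) so that the parity obstruction recycles cleanly for arbitrary $n\equiv 2\pmod 4$ rather than requiring a separate construction for each residue class modulo a larger modulus.
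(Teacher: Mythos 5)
The first thing to note is that this paper does not prove \tref{t:rez1} at all: the sentence immediately preceding it says the next two theorems \emph{summarise} results established in \cite{BCMW}, so there is no internal proof to compare your attempt against; the proof lives in that reference. Judged on its own merits, your proposal is a plan rather than a proof, and two of its load-bearing steps do not work as described. For case~(iii), orthogonality of a candidate $F_6$ to five given squares, together with the marginal conditions, imposes only $1+2(n-1)+5=2n+4$ independent linear constraints (this is exactly the count in \tref{t:linindep}) on an $n^2$-dimensional space, so the admissible affine subspace has dimension about $n^2-2n-4$; it is nowhere near ``low-dimensional'', and no dimension count will rule out all $\pm1$ points in it. The mechanism that actually delivers maximality in \cite{BCMW}, both here and in case~(i), is the relation machinery that the present paper recalls as \tref{relationz}: one exhibits subsets of rows, columns and symbols such that every cell of the orthogonal array meets an even number of them, and when $\lambda=n/2$ is odd this parity invariant obstructs any orthogonal mate. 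Your ``parity argument driven by $n/2$ being odd'' gestures at exactly this, but you never formulate the invariant, and without it neither the bachelor case nor the $k=5$ case closes.

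The second genuine gap is the uniqueness clause in case~(i), which you correctly identify as the hardest step but then leave entirely open: ``isolate a combinatorial invariant \dots{} that is forced to take a specific value'' is a wish, not an argument. The content of that clause is a classification. The unique bachelor is the $(n/2)$-dilation of the $2\times2$ Latin square (equivalently, the square forced by a relation with $|X_0|=|X_1|=n/2$), and one must prove that \emph{every other} binary frequency square of order $n\equiv2\pmod4$ admits an orthogonal mate; in \cite{BCMW} this requires an explicit mate construction for each non-exceptional square, not merely the identification of an invariant. By contrast, your treatment of case~(ii) by exhaustive computation for $n=6$ is indeed how \cite{BCMW} handles it, and that part of the plan is sound.
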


\begin{theorem}\label{t:rez2} 
There does not exist a set of $k$-{\rm maxMOFS}$(n;n/2)$ if:
\begin{itemize}
\item $k=1$ and $n\equiv0\pmod 4$;
\item $n=4$ and $k<9$;  or
\item $n=6$ and either $k\in\{2,3,4,16\}$ or $k\ge18$.
\end{itemize}
\end{theorem}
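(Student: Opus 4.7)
The three bullets concern very different regimes and require different techniques; I sketch each in turn.

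Bullet one is the only infinite statement. The goal is to show that every binary frequency square $F$ of order $n=4m$ has an orthogonal mate. Viewing $F$ as a $\{0,1\}$-matrix with all row- and column-sums equal to $n/2=2m$, orthogonality with another such matrix $F'$ reduces to the single bilinear condition $\sum_{i,j}F[i,j]F'[i,j]=n^2/4$, since the marginal sums already force the remaining three pair-counts to equal $n^2/4$ as well. My plan is constructive: because $n/2=2m$ is even, the ones in each row of $F$ admit a balanced split into two halves of size $m$, and one builds $F'$ row-by-row using these splits, adjusting the choices to maintain column sums equal to $n/2$ and to hit the required intersection count. The parity feature that makes this work -- evenness of $n/2$ -- is precisely what fails in the odd-$n/2$ regime, where bachelors do exist by \tref{t:rez1}. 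A cleaner alternative would be an integer-programming argument: the set of $\{0,1\}$-matrices with the prescribed line sums forms a high-dimensional integral transportation polytope, and one shows that the single orthogonality hyperplane meets the integer hull at a lattice point other than $F$ itself.

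Bullets two and three concern the small orders $n=4$ and $n=6$, and here the plan is exhaustive computer enumeration. One first classifies frequency squares of type $(n;n/2)$ up to isomorphism, then performs a backtracking search that extends a partial orthogonal set one square at a time, recording exactly which cardinalities $k$ yield maximal sets. For $n=4$ a complete set of $(n-1)^2=9$ MOFS exists (since $n$ is a prime power), and one confirms by search that every set of fewer than $9$ orthogonal squares can be extended. For $n=6$ no complete set exists -- there is no Hadamard matrix of order~$6$, so Federer's construction is unavailable -- and the achievable cardinalities form a genuinely nontrivial set; the search must rule out $k\in\{2,3,4,16\}$ and all $k\ge18$ up to the unreachable complete size of $(n-1)^2=25$. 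Symmetry reduction and pruning via the isomorphism group are the main practical issues in keeping the search tractable.

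The principal obstacle is bullet one, where a uniform structural argument is required for every $n\equiv 0\pmod 4$. The delicate point in the constructive approach is simultaneously meeting the column-sum condition on $F'$ and the intersection-count condition with $F$; in the polytope approach, the subtlety lies in showing that the orthogonality hyperplane hits the integer hull at a point distinct from $F$ whenever the parity condition is satisfied. Bullets two and three, by contrast, are routine in principle but demand a careful implementation to be computationally feasible.
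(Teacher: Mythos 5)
This theorem is not proved in the paper at all: it is a summary of results quoted from \cite{BCMW}, so there is no in-paper proof to match against. That said, the paper's own machinery in \sref{s:asympnonexist} does reprove your first bullet (take $k=1$ in \tref{towerofpower2}, noting $4m_2=4$), and comparing your sketch with that argument exposes a genuine gap. Your reduction of orthogonality to the single condition $\sum_{i,j}F[i,j]F'[i,j]=n^2/4$ is correct (it is essentially \lref{ezy}), but the construction you then propose --- split the ones of each row into two halves of size $m$ and build $F'$ \emph{row by row}, ``adjusting the choices'' to keep column sums right and to hit the intersection count --- is exactly the part that needs an argument, and a greedy row-by-row scheme does not obviously terminate without violating a column sum. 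The construction that works (in \cite{BCMW}, and in Lemmas~\ref{peazy} and~\ref{twoatatime} here) processes the rows \emph{in pairs}: for a pair of rows $\{r_1,r_2\}$ the column classes counted by $(F[r_1,c],F[r_2,c])$ satisfy $n_{00}=n_{11}$ and $n_{01}=n_{10}$, so the columns can be matched into $n/2$ pairs on which $F$ flips both coordinates; one then 2-colours these $n/2$ column-pairs with each colour used exactly $n/4$ times and fills the two rows of $F'$ accordingly. Each column automatically receives one $0$ and one $1$, each row receives $n/2$ of each, and each ordered pair occurs $n/2$ times in $F(r_1,r_2)\oplus F'(r_1,r_2)$. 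The hypothesis $n\equiv0\pmod4$ enters at precisely one point --- the $n/2$ column-pairs must split into two equal halves --- which is the parity you allude to but never locate. Your alternative ``transportation polytope'' remark merely restates the existence question and does not constitute an argument.

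Your treatment of the second and third bullets (exhaustive search over isomorphism classes for $n\in\{4,6\}$, with the caveat that for $n=6$ one must also establish that no set of $k$-MOFS$(6;3)$ exists for $k\ge18$, the maximum being $17$) is the right approach in principle and is how these facts were obtained in \cite{BCMW}; as presented, though, it is a computational plan rather than a proof, so the burden of the claim rests on an implementation you have not carried out.
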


The structure of this paper is as follows.  In \sref{s:maxMOFS}, we
give a construction for non-complete sets of maxMOFS$(n;n/2)$
when $n\equiv0\pmod 4$ (we are not aware of any earlier construction of this
nature). For instance, we construct $(2^v-1)^2$-maxMOFS$(2^{v}c;2^{v-1}c)$
for all $v\ge2$ and odd $c$. This is part of a more general construction
for sets of maxMOFS$(n;\lambda)$ given by \Cref{cy:maxhighpow2}.  In
\sref{s:asympnonexist} and \sref{s:polytope}, by exploiting the theory
of integral convex polytopes, we show that for any given $k$ there are
infinitely many values of $n$ such that a set of $k'$-maxMOFS$(n;n/2)$
does not exist for any $k'\le k$.  

Finally, in \sref{s:nomono}, we show that a set of
$2$-maxMOFS$(n;n/2)$ does not exist whenever $n$ is divisible by $4$.
The case when $n/2$ is odd remains elusive, but we conjecture the
following (which holds for $n<8$ by \tref{t:rez2}):

\begin{conjecture}
If $n$ is even, then there does not exist a set of $2$-{\rm maxMOFS}$(n;n/2)$. 
\end{conjecture}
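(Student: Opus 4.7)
The conjecture splits by the parity of $n/2$. When $n \equiv 0 \pmod 4$ it follows from the result in \sref{s:nomono}, so the substance lies in the case $n \equiv 2 \pmod 4$, where $n/2$ is odd and \tref{t:rez1} supplies a unique bachelor FS up to isomorphism. My plan is to assume, toward a contradiction, that a set $\{F_1, F_2\}$ of $2$-maxMOFS$(n;n/2)$ exists with $n \equiv 2 \pmod 4$, and to produce a third binary frequency square $F_3$ orthogonal to both.

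Viewing the squares as $\{0,1\}$-matrices, orthogonality of $F_3$ to $F_i$ is the scalar linear equation $\langle F_i, F_3 \rangle = n^2/4$. So it suffices to exhibit a $\{0,1\}$-point in the polytope
\[
P = \bigl\{X \in [0,1]^{n \times n} : \text{row and column sums of } X \text{ equal } n/2,\ \langle F_1, X\rangle = \langle F_2, X\rangle = n^2/4\bigr\},
\]
which is nonempty since $\tfrac12 J \in P$. Without the two orthogonality constraints, $P$ is the classical integral polytope of $\{0,1\}$-matrices with row and column sums $n/2$, whose integrality is immediate from the total unimodularity of the row--column incidence matrix (compare the polytope machinery of \sref{s:polytope}). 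The strategy is to round $\tfrac12 J$ to a $\{0,1\}$-vertex of $P$ by moving along $\pm 1$ alternating cycles whose sign patterns are balanced against the intersection sets $A_{ij} = \{(r,c) : F_1[r,c] = i,\ F_2[r,c] = j\}$, as such cycles simultaneously lie in the kernel of the row, column, and both orthogonality constraints.

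The hard part will be exhibiting a rich enough family of such balanced cycles. Adjoining two generic linear equations to a totally unimodular system can easily destroy integrality, so one needs a structural lemma of the rough form: \emph{whenever $F_1, F_2$ are orthogonal binary frequency squares with $n \equiv 2 \pmod 4$, the grid admits enough $(F_1, F_2)$-balanced alternating cycles to span the kernel of the constraints and round any point of $P$ to a $\{0,1\}$-point.} Proving such a lemma, or equivalently classifying the vertices of $P$, is where I expect the main obstacle to lie and is likely why the conjecture has remained open.

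A complementary route is an averaging argument: the expected value of $\langle F_2, F_3 \rangle$ over a uniformly random $F_3$ orthogonal to $F_1$ is, by the symmetry argument that each cell has marginal $1/2$, easily checked to equal $n^2/4$. The task then reduces to controlling the variance sufficiently to find a specific $F_3$ realising the mean exactly. The parity constraints forced by $n \equiv 2 \pmod 4$, together with the rigidity supplied by the uniqueness of the bachelor in \tref{t:rez1}, are the features I would try to exploit to push either approach through; absent such rigidity they both seem to stall, consistent with the greater difficulty the authors report for the $n/2$ odd case.
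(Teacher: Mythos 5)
The statement you are addressing is stated in the paper as a \emph{conjecture}, not a theorem: the authors prove only the case $n\equiv0\pmod4$ (\tref{t:main}, via partitioning the rows of $F_1\oplus F_2$ into balanceable sets), and they explicitly describe the case where $n/2$ is odd as remaining open. Your proposal correctly reduces to that open case, but what follows is a research plan rather than a proof. Both of your routes terminate exactly at the point where the real difficulty begins: the polytope route requires the structural lemma that the constrained polytope $P$ has a $\{0,1\}$-vertex (equivalently, that enough $(F_1,F_2)$-balanced alternating cycles exist to round $\tfrac12 J$), and you state yourself that proving this ``is where I expect the main obstacle to lie.'' Adjoining the two orthogonality hyperplanes to the totally unimodular transportation system does in general destroy integrality, so the nonemptiness of $P$ and the fractional point $\tfrac12 J$ give nothing by themselves. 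Likewise the averaging route only establishes that the mean of $\langle F_2,F_3\rangle$ over frequency squares orthogonal to $F_1$ is $n^2/4$; without control of the distribution (e.g.\ that the achieved values form an interval of integers containing the mean) this does not produce a single $F_3$ attaining the mean. Neither gap is filled, so no case of the conjecture beyond $n\equiv0\pmod4$ is actually established.

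Two further points. First, the rigidity you hope to extract from the uniqueness of the bachelor square in \tref{t:rez1} is not available here: in a putative set of $2$-maxMOFS$(n;n/2)$ the squares $F_1$ and $F_2$ are orthogonal to each other, so neither is a bachelor, and the classification of the unique bachelor square says nothing about them. Second, the paper's own machinery for $n\equiv0\pmod4$ (classifying non-balanceable pairs of rows in \lref{lem:exceptions} and repairing a row-pairing) is noted by the authors to break down when $n\equiv2\pmod4$, precisely because any pair of rows that are complementary in one of $F_1,F_2$ is then non-balanceable and the analogue of \lref{l:any4rows} fails; your cycle-rounding formulation would have to contend with the same complementary-row obstruction, which is worth confronting explicitly before investing in either approach.
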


For even $n$, define $\mu(n)$ to be the size of the smallest maximal set
of binary MOFS of order $n$.  Then from the above observations,
$\mu(n)=1$ if $n\equiv2\pmod4$, and $2<\mu(n)\le9$ if $n\equiv4\pmod8$,
but $\limsup \mu(n)$ is unbounded.
By comparison, it seems that maximal pairs of orthogonal Latin squares
exist for all orders $n>6$, and this has been proved 
for all orders that are not twice a prime \cite{DWW11}.

\section{A new construction for maximal MOFS}\label{s:maxMOFS}

Here we give a new construction for maximal sets of MOFS via dilations
of complete sets of MOFS.  In the following, we consider $n\times n$
matrices as vectors in an $n^2$-dimensional real vector space equipped
with the inner product $A\circ B=\sum_i\sum_ja_{ij}b_{ij}$ for
matrices $A=[a_{ij}]$ and $B=[b_{ij}]$.  Let $J_n$ be the $n\times n$
matrix with each entry equal to $1$.

\begin{theorem}\label{t:linindep}
Suppose that $\F=\{F_1,F_2,\dots,F_k\}$ is a set of
$k$-{\rm MOFS}$(n;n/m)$ with symbols $N(m)$.
For $r,c \in N(n)$,  $s \in N(m)$ and $1\le t\le k$ we define
\begin{align*}
\row_r[i,j]&=\begin{cases}
1&\text{if }i=r\text{ and}\\
0&\text{otherwise},
\end{cases}\\
\col_c[i,j]&=\begin{cases}
1&\text{if }j=c,\text{ and}\\
0&\text{otherwise},
\end{cases}\\
\ind_{s,t}[i,j]&=\begin{cases}
1&\text{if }F_t[i,j]=s,\text{ and}\\
0&\text{otherwise}.
\end{cases}
\end{align*}
Then $\{J_n\}\cup\{\row_r:1\le r\le n-1\}\cup\{\col_c:1\le c\le n-1\}
\cup\{\ind_{s,t}:1\le s\le m-1,\ 1\le t\le k\}$ is a linearly independent
set.
\end{theorem}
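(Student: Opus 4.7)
The plan is to start from a hypothetical linear dependence
\[
\alpha J_n + \sum_{r=1}^{n-1}\beta_r\row_r + \sum_{c=1}^{n-1}\gamma_c\col_c + \sum_{t=1}^{k}\sum_{s=1}^{m-1}\delta_{s,t}\ind_{s,t}=0
\]
and show that every coefficient must vanish. Reading the relation entrywise at cell $(i,j)$, and introducing the harmless conventions $\beta_0=\gamma_0=0$ and $\delta_{0,t}=0$ for all $t$, the dependence is equivalent to the scalar identities
\[
\alpha + \beta_i + \gamma_j + \sum_{t=1}^{k} \delta_{F_t[i,j],t} = 0 \qquad\text{for all }i,j\in N(n). \quad(\star)
\]

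The first step is to eliminate $\beta$ and $\gamma$. Fix $i$ and sum $(\star)$ over $j\in N(n)$; since each of the $m$ symbols occurs exactly $n/m$ times in row $i$ of every $F_t$, the result is
\[
n\alpha + n\beta_i + \sum_{j}\gamma_j + (n/m)\sum_{s,t}\delta_{s,t}=0,
\]
whose only $i$-dependence sits in the term $n\beta_i$. So all $\beta_i$ coincide, and $\beta_0=0$ forces $\beta_r=0$ for every $r$. The symmetric argument (summing over $i$ with $j$ fixed) gives $\gamma_c=0$ for every $c$.

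Equation $(\star)$ then reduces to $\alpha + \sum_{t}\delta_{F_t[i,j],t}=0$. Writing $D_t:=\sum_{s=0}^{m-1}\delta_{s,t}$ and summing over all $n^2$ cells gives $m\alpha=-\sum_t D_t$. The decisive step is to sum $(\star)$ instead over just the $n^2/m$ cells with $F_{t_0}[i,j]=s_0$, for a fixed pair $1\le t_0\le k$ and $1\le s_0\le m-1$. The $t=t_0$ contribution is a direct count, $(n^2/m)\delta_{s_0,t_0}$; and for each $t\neq t_0$, mutual orthogonality tells us the pair $(s_0,s)$ appears exactly $(n/m)^2$ times when $F_{t_0}$ and $F_t$ are superimposed, contributing $(n/m)^2 D_t$. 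Substituting the earlier expression for $\alpha$ collapses the restricted-sum equation to $\delta_{s_0,t_0}=D_{t_0}/m$; summing this identity over $s_0=1,\dots,m-1$ yields $D_{t_0}=(m-1)D_{t_0}/m$, hence $D_{t_0}=0$. Thus $\delta_{s,t}=0$ for every $s,t$, and finally $\alpha=0$ from the total-sum identity.

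The main obstacle is purely bookkeeping in the last step: one must correctly split the restricted sum into the diagonal term $t=t_0$ (a pure single-symbol count) and the off-diagonal terms $t\neq t_0$ (where mutual orthogonality is invoked), and then match the $\alpha$-contribution against the total-sum identity so that all terms involving $\sum_t D_t$ cancel cleanly. Once this cancellation is observed, the structural content is pleasantly simple: mutual orthogonality of the $F_t$ forces each cross-type sum to depend on the coefficients $\delta_{s,t}$ only through the sums $D_t$, which is precisely what makes each $D_t$ vanish.
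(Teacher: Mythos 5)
Your proof is correct. Every step checks out: the entrywise identity $(\star)$ is the right reading of the dependence; summing over a row (resp.\ column) isolates $n\beta_i$ (resp.\ $n\gamma_j$) because each symbol occurs exactly $n/m$ times per line, so the normalisation $\beta_0=\gamma_0=0$ kills those coefficients; and the restricted sum over the $n^2/m$ cells with $F_{t_0}[i,j]=s_0$ correctly splits into the diagonal count $(n^2/m)\delta_{s_0,t_0}$ and the off-diagonal terms $(n/m)^2D_t$ via orthogonality, collapsing to $\delta_{s_0,t_0}=D_{t_0}/m$ and hence $D_{t_0}=0$.

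The route is genuinely different in organisation from the paper's, though it rests on the same counts. The paper does not hypothesise a dependence at all: it replaces each vector by a centred version ($\row'_r=n\row_r-J_n$, $\col'_c=n\col_c-J_n$, $\ind'_{s,t}=m\ind_{s,t}-J_n$), verifies that vectors from different blocks are pairwise orthogonal under the inner product $A\circ B$ (the cross terms $\ind'_{s,t}\circ\ind'_{s',t'}=0$ for $t\ne t'$ being exactly where MOFS orthogonality enters), and concludes independence from orthogonality of the blocks. Your elimination argument is implicitly performing the same inner products --- summing $(\star)$ over row $i$ is pairing against $\row_i$, and the restricted sum is pairing against $\ind_{s_0,t_0}$ --- but it is more elementary in that it never needs the insight of which shifted vectors to construct, at the cost of more bookkeeping. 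What the paper's packaging buys is a reusable structural fact (an explicit near-orthogonal spanning set), which makes the dimension count of Corollary~\ref{cor:newbie} and the uniqueness argument in Theorem~\ref{t:dilsuff} immediate; your version delivers the same theorem but would need to be re-run rather than quoted for those downstream uses. Either way, the result stands as proved.
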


\begin{proof}
Define $\row'_r=n\row_r-J_n$, $\col'_c=n\col_c-J_n$ and
$\ind'_{s,t}=m\ind_{s,t}-J_n$ for each $r,c,s,t$.
It is trivial that $\{J_n\}\cup\{\row_r:1\le r\le n-1\}$ and
$\{J_n\}\cup\{\col_c:1\le c\le n-1\}$ are both linearly independent sets,
as is $\{J_n\}\cup\{\ind_{s,t}:1\le s\le m-1\}$ 
for any given $t$. It follows that 
$\{J_n\}\cup\{\row'_r:1\le r\le n-1\}$,
$\{J_n\}\cup\{\col'_c:1\le c\le n-1\}$ and
$\{J_n\}\cup\{\ind'_{s,t}:1\le s\le m-1\}$ are each linearly independent sets. 
It is also easy to check that 
$$\row'_r\circ\col'_c=\row'_r\circ\ind'_{s,t}=\ind'_{s,t}\circ\col'_c
=\ind'_{s,t}\circ\ind'_{s',t'}=0$$
for all $r,c,s,t,s',t',$ provided $t\ne t'$. Hence
$$\{J_n\}\cup\{\row'_r:1\le r\le n-1\}\cup\{\col'_c:1\le c\le n-1\}
\cup\{\ind'_{s,t}:1\le s\le m-1,\ 1\le t\le k\}$$ 
is a linearly independent set, from which the result follows.
\end{proof}

\begin{corollary}\label{cor:newbie}
If $\F$ is a set of $k$-{\rm MOFS}$(n;n/m)$,
then $k\le(n-1)^2/(m-1)$.
\end{corollary}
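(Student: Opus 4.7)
The plan is to derive the bound as an immediate dimension count from \tref{t:linindep}. The ambient space of $n\times n$ real matrices has dimension $n^2$, so any linearly independent set of matrices has at most $n^2$ elements.

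First I would count the cardinality of the linearly independent set produced by \tref{t:linindep}. It consists of $J_n$, of $n-1$ row-indicator matrices, of $n-1$ column-indicator matrices, and of $(m-1)k$ symbol-indicator matrices $\ind_{s,t}$. This totals
\[
1 + (n-1) + (n-1) + (m-1)k = 2n-1+(m-1)k
\]
elements.

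Next I would compare this to the dimension bound. Linear independence in an $n^2$-dimensional space forces
\[
2n-1+(m-1)k \le n^2,
\]
which rearranges to $(m-1)k \le n^2-2n+1 = (n-1)^2$, and hence $k \le (n-1)^2/(m-1)$, as required.

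There is no real obstacle here once \tref{t:linindep} is in hand: the corollary is purely a cardinality-versus-dimension argument, and the only thing worth noting is that the bound is tight (which is why such a set is called complete) exactly when the linearly independent set of \tref{t:linindep} is actually a basis of the space of $n\times n$ matrices.
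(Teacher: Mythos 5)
Your proposal is correct and is essentially identical to the paper's own proof: both count the $1+2(n-1)+(m-1)k$ linearly independent matrices from \tref{t:linindep}, compare against the ambient dimension $n^2$, and rearrange to get $(m-1)k\le(n-1)^2$. Nothing further is needed.
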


\begin{proof}
\tref{t:linindep} exhibited a set of $1+2(n-1)+(m-1)k$ independent
vectors in a $n^2$-dimensional vector space. It follows that
$(m-1)k\le(n-1)^2$.
\end{proof}
\noindent
Note that previous proofs of \Cref{cor:newbie} have been given in
\cite{HRS}, \cite{JP20} and \cite{JMM}. 

Let $\F$ be a set of $k$-MOFS$(n;n/m)$. The
\emph{$d$-dilation} of $\F$ is the set of
$k$-MOFS$(dn;dn/m)$ obtained by replacing every entry $e$ in every
square in $\F$ by a $d\times d$ block of entries, each equal
to $e$. It is trivial to check that $d$-dilation does indeed create a
new set of MOFS. We now explore the more interesting question of
whether $d$-dilation preserves maximality.  We first give some
necessary conditions.

\begin{lemma}\label{l:dilnec}
Let $\F$ be a set of $k$-{\rm MOFS}$(n;n/m)$. Let $\F'$ be the
$d$-dilation of $\F$ and suppose that $\F'$ is maximal. Then $\F$ is
maximal and $d\not\equiv0\pmod m$.
\end{lemma}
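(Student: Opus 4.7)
The plan is to establish both conclusions by contrapositive, since in each case the strategy is to exhibit an extra frequency square orthogonal to every member of $\F'$.

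For the first part, suppose $\F$ is not maximal, so some frequency square $G$ of type $(n;n/m)$ is orthogonal to every $F_i\in\F$. I would form the $d$-dilation $G'$ of $G$ and verify that $G'$ is orthogonal to every $F'_t\in\F'$. This is immediate from the dilation construction: each position $(i,j)$ of the superposition $G\circ F_t$ blows up into a constant $d\times d$ block of $G'\circ F'_t$, so every ordered-pair count is multiplied by $d^2$, turning $(n/m)^2$ into $(dn/m)^2$, which is exactly the orthogonality requirement for squares of type $(dn;dn/m)$. Hence $\F'\cup\{G'\}$ is a strictly larger set of MOFS, contradicting the maximality of $\F'$.

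For the second part, suppose $d\equiv 0\pmod m$ and write $d=mc$. The idea is that when $m\mid d$ each $d\times d$ block of $\F'$ is large enough to accommodate a full frequency square of type $(d;c)$, so one can build an orthogonal mate for $\F'$ that is completely decoupled from the block structure. Concretely, let $H$ be any frequency square of type $(d;c)$ (such squares exist trivially since $m\mid d$, for instance $H[a,b]=\lfloor((a+b)\bmod d)/c\rfloor$) and define $G'$ by tiling, i.e.\ $G'[id+a,jd+b]=H[a,b]$ for $i,j\in N(n)$ and $a,b\in N(d)$. The row and column counts of $G'$ give each symbol $n(d/m)=dn/m$ occurrences, so $G'$ is of type $(dn;dn/m)$. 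Since the pair at position $(id+a,jd+b)$ equals $(H[a,b],F_t[i,j])$, the count of any pair $(s,r)$ decouples as $|\{(a,b):H[a,b]=s\}|\cdot|\{(i,j):F_t[i,j]=r\}|=(d^2/m)(n^2/m)=(dn/m)^2$, giving orthogonality of $G'$ with every $F'_t$ and again contradicting maximality.

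I do not anticipate any real obstacle; both halves reduce to short orthogonality counts, and the construction in the second half exploits divisibility in the most direct way possible. The only point worth checking is that the new square produced in either case is automatically distinct from every $F'_t$, since orthogonal frequency squares of type $(dn;dn/m)$ with $m\ge 2$ never coincide.
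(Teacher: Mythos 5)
Your proof is correct and follows essentially the same route as the paper: the first claim by dilating a hypothetical extension of $\F$, and the second by tiling each dilated block with a frequency square of type $(d;d/m)$ (your explicit formula for $H$ is just a concrete instance of the circulant the paper suggests). The closing worry about distinctness is unnecessary, since maximality only requires that no frequency square be orthogonal to all members of the set.
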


\begin{proof}
If $\F$ was not maximal then we could extend it with a new frequency square $F$.
But then the $d$-dilation of $F$ would be orthogonal to every square in $\F'$.
So we can be sure that $\F$ is maximal.

If $d$ is divisible by $m$ then there exists a frequency square $C$ of
type $(d;d/m)$. For example, we could create $C$ as a circulant
matrix in which its first row contains every symbol $d/m$ times. Now
build a frequency square of type $(dn;dn/m)$ orthogonal to every
square in $\F'$, by simply putting a copy of $C$ in the position of
each dilated block. This contradicts the maximality of $\F'$, and
completes the proof.
\end{proof}

Next we give some sufficient conditions. We first need to explain the
idea of a {\em relation}, which was a fundamental tool to show the
existence of maximal sets of MOFS in \cite{BCMW} and \cite{JP20}.  The
technique of relations was previously used in \cite{Do} and \cite{DH}
(with origins in \cite{St}) to analyse maximal sets of mutually
orthogonal Latin squares.

A set $\F=\{F_1,\dots,F_k\}$ of $k$-MOFS$(n;n/m)$ can be written as an
$n^2\times(k+2)$ orthogonal array $\O$ in which there is a row
\begin{equation}\label{e:rowofOA}
\big[i,j,F_1[i,j],F_2[i,j],\dots,F_k[i,j]\big]\,,
\end{equation}
for each $i \in N(n)$ and $j \in N(n)$. In this context it is safest
to consider sets of MOFS to have an indexing that implies an ordering
on the squares (and hence the order of the columns in $\O$ is
well-defined).  Let $Y_c$ be the set of symbols that occur in column
$c$ of $\O$.  Then a \emph{relation} is a $(k+2)$-tuple
$(X_0,\dots,X_{k+1})$ of sets such that $X_i\subseteq Y_i$ for $i \in N(k+2)$, with the property that every row~\eref{e:rowofOA} of $\O$
has an even number of columns $c$ for which the symbol in column $c$
is an element of $X_c$.  
We will consider a particular type of relation from \cite{JP20}, for which we make the following definition.
A \emph{Jedwab-Popatia relation} is a relation such that
$|X_i| = 1$ for $i \geq 2$ and at least one of $\emptyset \subsetneq X_0 \subsetneq Y_0$ and $\emptyset \subsetneq X_1 \subsetneq Y_1$ holds.

The following theorem is proved in \cite{JP20}, generalising an
earlier result from \cite{BCMW}. It shows that under certain
conditions a Jedwab-Popatia relation implies maximality. See \cite{BCMW,JP20}
for a more extensive study of the structure of relations for sets of
MOFS, including restrictions on which relations can be achieved.

\begin{theorem}\label{relationz}
Suppose $\lambda$ is odd and let $\F$ be a set of
$k$-{\rm MOFS}$(n;\lambda)$ that satisfies a Jedwab-Popatia relation. Then $\F$
is maximal.
\end{theorem}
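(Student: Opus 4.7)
The plan is to argue by contradiction: assume there is a frequency square $F$ of type $(n;\lambda)$ orthogonal to every $F_t\in\F$, then evaluate a single parity sum two different ways to derive a conflict. Write $u_i=[i\in X_0]$, $v_j=[j\in X_1]$, and for each $1\le t\le k$ let $s_t$ denote the unique element of $X_{t+1}$ (which exists by the Jedwab--Popatia hypothesis). The relation condition says precisely that
$$N(i,j):=u_i+v_j+\sum_{t=1}^{k}[F_t[i,j]=s_t]$$
is even for every $(i,j)\in N(n)\times N(n)$, so summing $N(i,j)$ over any subset of cells yields an even total.

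First, I would fix any symbol $s\in N(m)$ and sum $N(i,j)$ over the $n\lambda$ cells with $F[i,j]=s$. Since $s$ occurs $\lambda$ times in each row and column of $F$, the $u_i$ and $v_j$ parts contribute $\lambda|X_0|$ and $\lambda|X_1|$ respectively, and for each $t$ the orthogonality of $F$ with $F_t$ gives exactly $\lambda^2$ cells where $F[i,j]=s$ and $F_t[i,j]=s_t$. Hence the sum equals $\lambda(|X_0|+|X_1|)+k\lambda^2$, which must be even; since $\lambda$ is odd, this forces $|X_0|+|X_1|+k\equiv 0\pmod 2$.

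Next, I would repeat the computation with $F$ replaced by one of the given squares $F_u$ and with an ``off-diagonal'' choice of symbol. Pick any $u\in\{1,\dots,k\}$ and any $s\in N(m)\setminus\{s_u\}$ (available because $m\ge 2$), and sum $N(i,j)$ over the cells with $F_u[i,j]=s$. The row and column parts again give $\lambda(|X_0|+|X_1|)$; for $t\ne u$, orthogonality of $F_u$ with $F_t$ contributes $\lambda^2$; but for $t=u$ the contribution is $0$ because $s\ne s_u$. The total is therefore $\lambda(|X_0|+|X_1|)+(k-1)\lambda^2$, also necessarily even, forcing $|X_0|+|X_1|+k\equiv 1\pmod 2$. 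This contradicts the previous conclusion, so no such $F$ exists and $\F$ is maximal.

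The whole argument is a straightforward double-count and I do not anticipate a serious obstacle; the only slightly subtle point is the observation that exchanging $F$ for an existing $F_u$ at an off-diagonal symbol deletes exactly one $\lambda^2$ contribution relative to the first sum, and the hypothesis that $\lambda$ is odd is what converts this single-unit shift in the coefficient $k$ into a parity flip of $|X_0|+|X_1|+k$. The Jedwab--Popatia singleton structure on $X_i$ for $i\ge 2$ is exactly what makes the orthogonality terms collapse cleanly to $\lambda^2$ each, so the two parity computations line up to give the required contradiction.
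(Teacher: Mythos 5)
Your proof is correct: both parity counts check out (the sum of the relation parity over the $n\lambda$ cells occupied by a fixed symbol of the hypothetical mate $F$ gives $\lambda(|X_0|+|X_1|)+k\lambda^2$, while the same sum over the cells of $F_u$ carrying a symbol $s\ne s_u$ gives $\lambda(|X_0|+|X_1|)+(k-1)\lambda^2$, and with $\lambda$ odd these cannot both be even), so no orthogonal mate exists. The paper does not actually prove \tref{relationz} but defers to \cite{JP20}; your double count is precisely the standard relation argument from \cite{BCMW} and \cite{JP20}, and the only points worth flagging are cosmetic --- your $N(i,j)$ clashes with the paper's notation $N(n)$ for $\{0,\dots,n-1\}$, and the properness condition on $X_0,X_1$ in the Jedwab--Popatia definition (as well as the assumption $m\ge2$, which is forced whenever such a relation exists) is never needed in your argument.
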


We can now present conditions which guarantee that the $d$-dilation of
a set of MOFS is maximal.

\begin{theorem}\label{t:dilsuff}
Suppose that $\F'$ is the $d$-dilation of a set $\F$ of $k$-{\rm MOFS}$(n;n/m)$.
Then $\F'$ is maximal if either
\begin{itemize}
\item $d^2\not\equiv0\pmod m$ and $\F$ is a complete set of {\rm MOFS} or 
\item $d$ and $n/m$ are odd, and $\F$ satisfies Jedwab-Popatia relation.
\end{itemize} 
\end{theorem}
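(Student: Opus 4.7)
The plan is to handle the two sufficient conditions separately.

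For the first condition, where $\F$ is complete and $d^2\not\equiv 0\pmod m$, I would argue by contradiction. Suppose there exists a frequency square $F$ of type $(dn;dn/m)$ that is orthogonal to every square in $\F'$. The key idea is to ``average'' $F$ back down to the original $n\times n$ scale, one symbol at a time. For each symbol $s\in N(m)$, define $C_s\in\Z^{n\times n}$ by letting $C_s[i,j]$ count the occurrences of $s$ in the $d\times d$ block of $F$ at position $(i,j)$ of the undilated grid. Using the row-sum and column-sum constraints on $F$ gives $C_s\circ J_n=d^2n^2/m$ and $C_s\circ\row_r=C_s\circ\col_c=d^2n/m$. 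Using orthogonality of $F$ with each $F'_t$, combined with the fact that $F'_t$ is constant on each dilated block with value $F_t[i,j]$, gives $C_s\circ\ind_{s',t}=d^2n^2/m^2$ for all $s'$ and $t$. A direct check shows that the rational matrix $(d^2/m)J_n$ has exactly these same inner products with all the listed vectors. Since $\F$ is complete, \tref{t:linindep} supplies $1+2(n-1)+(m-1)(n-1)^2/(m-1)=n^2$ linearly independent vectors, i.e.\ a basis of the ambient space, so these inner products determine $C_s$ uniquely and hence $C_s=(d^2/m)J_n$. This contradicts $C_s\in\Z^{n\times n}$ unless $m\mid d^2$.

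For the second condition, where $d$ and $n/m$ are odd and $\F$ satisfies a Jedwab-Popatia relation $(X_0,X_1,X_2,\ldots,X_{k+1})$, the plan is to exhibit a Jedwab-Popatia relation for $\F'$ itself and then invoke \tref{relationz}, noting that the associated parameter $\lambda=dn/m$ is indeed odd. The lifted relation is just the ``dilation'' of the original: put $X'_0=\{di+a:i\in X_0,\ 0\le a<d\}$, define $X'_1$ from $X_1$ analogously, and set $X'_c=X_c$ for $c\ge 2$. Verification is routine and splits into three parts: (i) each row of the orthogonal array of $\F'$ indexed by $(I,J)=(di+a,dj+b)$ has the same number of ``special'' entries as the row of the orthogonal array of $\F$ indexed by $(i,j)$, so the parity condition transfers; (ii) $|X'_c|=|X_c|=1$ for all $c\ge2$; and (iii) whichever of $X_0,X_1$ was a proper non-empty subset of $N(n)$ lifts to a proper non-empty subset of $N(dn)$ of $d$ times the size, which lies strictly between $0$ and $dn$.

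The main obstacle, if there is one, lies in the first case, and it is essentially bookkeeping: identifying the right rational matrix $(d^2/m)J_n$ against which to compare $C_s$, and carrying out the inner-product computations cleanly. Once those calculations are in hand, both parts of the theorem reduce almost immediately to previously established results (\tref{t:linindep} and \tref{relationz}, respectively), with the integrality obstruction $m\nmid d^2$ delivering the contradiction in one case and the odd value of $\lambda$ powering the maximality criterion in the other.
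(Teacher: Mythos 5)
Your proposal is correct and follows essentially the same route as the paper: for the complete case, the paper likewise counts symbol occurrences per dilated block (it uses only symbol $0$ where you use every symbol $s$, an immaterial difference), compares against the constant matrix $(d^2/m)J_n$, and derives a contradiction with \tref{t:linindep} from the integrality failure; for the relation case, the paper simply asserts that the Jedwab--Popatia relation lifts to the dilation, which your explicit construction of $X'_0,X'_1,X'_c$ verifies in detail.
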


\begin{proof}
If both $n/m$ and $d$ are odd then $dn/m$ is also odd. Also,
if $\F$ satisfies a Jedwab-Popatia relation then it is easy to see
that $\F'$ also satisfies a Jedwab-Popatia relation, and hence
is maximal by \tref{relationz}.

Hence for the remainder of the proof we may assume that
$\F=\{F_1,\dots,F_k\}$ is a complete set; in other words $k=(n-1)^2/(m-1)$. 

Aiming for a contradiction, assume that there exists a frequency
square $F$ of type $(dn;dn/m)$ that is orthogonal to every square
in the $d$-dilation $\F'=\{ F'_1,\dots,F'_k\}$ of $\F$.  Let
$X=[x_{ij}]$ be the integer matrix of order $n$ in which 
$x_{ij}$ is the number of times that symbol 0 occurs in the $ij$-th block of
$F$.  Since $0$ occurs $dn/m$ times in every row and column of $F$ we have
\begin{align}
X\circ\row_r&=d^2n/m,\label{e:rowsum}\\
X\circ\col_c&=d^2n/m,\label{e:colsum}\\
X\circ J_n&=d^2n^2/m,\label{e:Jsum}
\end{align}
for $r,c \in N(n)$. Also, the fact that $F$ and $F'_t$ 
are orthogonal frequency squares means that
\begin{align}
X\circ\ind_{s,t}&=(dn/m)^2\label{e:XorthM}
\end{align}
for $s \in N(m)$ and $1\le t\le k$.

However \eref{e:rowsum}, \eref{e:colsum}, \eref{e:Jsum} and \eref{e:XorthM}
have another simultaneous solution, namely the matrix with all entries equal to
$d^2/m$. If $d^2\not\equiv0\pmod m$ then this solution is not
an integer matrix, and hence is different from the solution exhibited above.
Having two distinct solutions contradicts \tref{t:linindep}, so
we conclude that $\F'$ must be maximal.
\end{proof}

\begin{corollary}\label{cy:maxhighpow2}
  Let $q=p^u$ be a prime power and let $p^v$ be the highest power of $p$ that
  divides $n$. If $0\le v-uh<u/2$ for $h\ge1$ then there exists a
  set of $(q^h-1)^2/(q-1)$-{\rm maxMOFS}$(n;n/q)$.
\end{corollary}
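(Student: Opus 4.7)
The plan is to apply \tref{t:dilsuff} to the $d$-dilation of a complete set of MOFS whose order is a pure power of the prime $p$.

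First I would invoke a complete set $\F$ of $(q^h-1)^2/(q-1)$-MOFS$(q^h;q^{h-1})$. For $q>2$ this comes from the classical prime-power constructions cited in \sref{s:intro}; for $q=2$ it follows from Federer's theorem applied to the Sylvester--Hadamard matrix of order $2^h$. In either case, $\F$ is a complete set of MOFS of prime-power order having exactly the cardinality that appears in the corollary.

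Next I would set $d=n/q^h$. Writing $n=p^v c$ with $\gcd(c,p)=1$, the hypothesis $v-uh\ge0$ guarantees that $d=p^{v-uh}c$ is a positive integer. The $d$-dilation $\F'$ of $\F$ is then a set of $(q^h-1)^2/(q-1)$-MOFS$(n;n/q)$, since $dq^h=n$ and $dq^{h-1}=n/q$.

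Finally I would verify the first bullet of \tref{t:dilsuff}: with $m=q=p^u$ and $d^2=p^{2(v-uh)}c^2$ where $\gcd(c,p)=1$, the condition $d^2\not\equiv0\pmod m$ becomes $2(v-uh)<u$, which is exactly the upper half of the hypothesis. Thus \tref{t:dilsuff} applies and $\F'$ is maximal. The argument is essentially an arithmetic repackaging of \tref{t:dilsuff}, so no serious obstacle is expected; the only step requiring a brief comment is the source of the complete set $\F$, which splits into the $q=2$ (Hadamard) and $q>2$ (prime-power) cases noted above.
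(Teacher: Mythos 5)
Your proposal is correct and follows essentially the same route as the paper: take a complete set of MOFS$(q^h;q^{h-1})$, dilate by $d=n/q^h$, and check $d^2\not\equiv0\pmod q$ via $2(v-uh)<u$ before applying \tref{t:dilsuff}. The only cosmetic difference is your case split for the source of the complete set; the paper simply cites \cite{HRS} for all prime powers.
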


\begin{proof}
From \cite{HRS} we know that a complete set $\F$ of MOFS$(q^h;q^{h-1})$
exists. Let $d=n/q^h$ and note that $d$ is an integer because $v\ge uh$.
Also, by assumption, the highest power of $p$ dividing
$d$ is $p^{v-uh}$. It follows that $d^2\not\equiv0\pmod q$ since $2(v-uh)<u$.
Hence, applying \tref{t:dilsuff} gives the result.
\end{proof}

Applying \Cref{cy:maxhighpow2} to the binary case we have $q=2$, which
necessitates $v=h$, and we find that there exists a
set of $(2^v-1)^2$-{\rm maxMOFS}$(n;n/2)$ whenever $n\equiv2^v\pmod{2^{v+1}}$.
The $v=1$ case of this statement is just the existence of bachelor frequency squares,
as given in \tref{t:rez1}, but for $v>1$ we get something new.

Next we give an example that shows that a $d$-dilation of a maximal
set of binary MOFS need not itself be maximal, even if $d$ is
odd. This shows that a tempting generalisation of \tref{t:dilsuff}
fails.

Consider the example (16) given in \cite{BCMW} of a set $\F$ of
5-maxMOFS$(6;3)$ that do not satisfy a relation.  Let $\F'$ be the
3-dilation of $\F$. We claim that $\F'$ is not maximal. Indeed, a
frequency square that extends $\F'$ can be obtained from the following
matrix:
\[
\left[
\begin{array}{cccccc}
0&3&2&1&3&0\\
1&0&2&2&3&1\\
3&1&0&2&1&2\\
1&2&2&1&0&3\\
1&1&2&2&0&3\\
3&2&1&1&2&0\\
\end{array}\right].
\]
We simply replace each entry $c$ in this matrix by a frequency square of type $(3;3-c,c)$; for example, a binary $3\times3$ circulant block that has $c$ positive entries in each row.

\section{Asymptotic non-existence results}\label{s:asympnonexist}

In this section we show that $\mu(n)$, the size of the smallest maximal set of
MOFS$(n,n/2)$, does not satisfy any bound that is uniform in $n$.
Instead, we find that for any $k$ there exist infinitely many $n$ for
which $\mu(n)>k$.
This provides an interesting counterpoint to \tref{t:rez1}
and \Cref{cy:maxhighpow2}, both of which provide infinite families of
maximal sets of binary MOFS of a fixed cardinality. For example,
\Cref{cy:maxhighpow2} shows that $\mu(n)\le9$
for all $n\equiv4\pmod8$, and $\mu(n)\le49$
for all $n\equiv8\pmod{16}$, and so on.

Let $\Gamma(m)$ be the least common multiple of the integers
$1,2,\dots,m$.  Let $m_1=m_2=1$, $m_3=2$ and recursively define 
$m_{i+1}=2m_i(m_i-1)\Gamma(2m_i-1)$ for $i\geq 3$.
We will show that:

\begin{theorem}\label{towerofpower}
  If $4m_{2k}$ divides $n$, then there does not exist a set of
  $k$-{\rm maxMOFS}$(n;n/2)$.
\end{theorem}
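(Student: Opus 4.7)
The plan is to encode the existence of an orthogonal mate to a given $k$-MOFS as the existence of a $0/1$ point in a convex polytope, and then exploit the strong divisibility hypothesis $4m_{2k}\mid n$ to manufacture such an integer point, contradicting maximality. Suppose $\F=\{F_1,\dots,F_k\}$ is a purported set of $k$-maxMOFS$(n;n/2)$. An orthogonal extension of $\F$ would be a binary frequency square $F$, encoded by its indicator $X=\ind_{0,F}\in\{0,1\}^{n\times n}$ of symbol $0$, which must satisfy the linear constraints from \tref{t:linindep}:
\[
X\circ\row_r=\tfrac{n}{2},\qquad X\circ\col_c=\tfrac{n}{2},\qquad X\circ\ind_{s,t}=\tfrac{n^{2}}{4}\quad(r,c\in N(n),\,s\in\{0,1\},\,1\le t\le k).
\]
Let $P\subset[0,1]^{n\times n}$ denote the polytope of real matrices obeying these equalities. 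By \tref{t:linindep} the defining vectors are independent, so $P$ has dimension exactly $(n-1)^{2}-k$; it is non-empty because $\tfrac12 J_{n}\in P$. The whole game is thus to prove that $P$ contains a lattice point.

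I would proceed by induction on $k$, mirroring the recursion $m_{i+1}=2m_i(m_i-1)\Gamma(2m_i-1)$. The base case $k=1$ with $4\mid n$ is exactly the first bullet of \tref{t:rez2}. For the inductive step, consider the ``one-smaller'' polytope $P'\supset P$ obtained by dropping the two orthogonality constraints against $F_k$; since $4m_{2k}$ is divisible by $4m_{2(k-1)}$, the inductive hypothesis supplies integer points of $P'$, i.e.\ binary frequency squares orthogonal to $F_1,\dots,F_{k-1}$. The task then becomes to select such an integer point that is moreover orthogonal to $F_k$, i.e.\ that sits on the extra hyperplane $X\circ\ind_{0,k}=n^{2}/4$. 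The natural device is to average finitely many known integer extensions of $\{F_1,\dots,F_{k-1}\}$ to produce a point on this hyperplane and then round back to an integer solution via a swap/rounding argument inside the affine slice of $P'$; the divisibility by $4m_{2k}$ is what guarantees that every denominator introduced by the averaging or rounding step can be cleared.

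The main obstacle, and the reason the $m_i$ grow so rapidly, is controlling those denominators. A convex combination of at most $N$ integer points has entries with denominator dividing $\Gamma(N)$, so for the round-and-swap step to land on $\{0,1\}$-values one needs $n/2$ (and hence $n$) to be divisible by $\Gamma(2m_{2k-1}-1)$ and by the combinatorial factors $2m_{2k-1}(m_{2k-1}-1)$ coming from enumerating the ways in which averaging can fail to be integral. This is precisely the content of the recursion, and is where I expect a polyhedral lemma from \sref{s:polytope} to do the heavy lifting.

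A final wrinkle is to verify that the integer point of $P$ produced in this way genuinely gives a \emph{new} frequency square, i.e.\ is not just (the indicator of $0$ in) some $F_i$. This should be automatic: as noted when setting up $P$, the indicator $\ind_{0,F_i}$ fails the orthogonality equation $\ind_{0,F_i}\circ\ind_{0,i}=n^{2}/4$ (the actual value is $n^{2}/2$), so the $F_i$ themselves do not even lie in $P$, and any lattice point of $P$ automatically yields a genuine orthogonal extension of $\F$, contradicting the assumed maximality.
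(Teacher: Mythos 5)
Your reduction of the problem to finding a lattice point in the polytope $P$ is a fair restatement of what must be proved, and your base case ($k=1$ with $4\mid n$, via the first bullet of \tref{t:rez2}) is fine. But the inductive step contains the entire difficulty and is not actually an argument. Averaging integer points of $P'$ can certainly place you on the hyperplane $X\circ\ind_{0,k}=n^2/4$ (for instance, average a mate $X$ of $F_1,\dots,F_{k-1}$ with its complement $J_n-X$), but what you then hold is a \emph{fractional} point of $P$, and ``round back to an integer solution via a swap/rounding argument'' is precisely the assertion that $P$ contains a lattice point --- i.e.\ the theorem itself. No general rounding principle applies here: $P$ is not known to be an integral polytope, its vertices are not identified, and the integer-decomposition machinery you gesture at requires an \emph{integral} polytope with explicitly controlled vertices and dimension. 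The hypothesis $4m_{2k}\mid n$ does not by itself clear the denominators of an arbitrary convex combination followed by an unspecified rounding, and your induction on $k$ does not account for the actual shape of the recursion $m_{i+1}=2m_i(m_i-1)\Gamma(2m_i-1)$, which is indexed by $2k$ steps, not $k$.

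The paper's route is structurally different and shows where that recursion comes from. It proves the stronger \tref{towerofpower2} (no orthogonality among the $F_i$ needed) by building the mate two rows at a time (\lref{peazy}, \lref{twoatatime}) and inducting not on $k$ but on the $2k$ pairs (row of the row-pair, square index): at stage $s$ one has an equipartition of the columns into $2\beta_s=n/(2m_s)$ parts that is ``good'' for $s$ of these pairs, and one coarsens it to accommodate one more. The coarsening step is \lref{problem1}, a decomposition statement about integer partitions, and it is there --- for the explicit $(2m-1)$-dimensional polytope $\PP(b)$ whose vertex set is computed in \lref{l:vert} --- that integrality ($\Gamma(2m-1)\mid b$, \Cref{cor:probisintpoly}) and the integer decomposition property ($(2m-2)\PP$ has IDP by \tref{thm:intpolyhasidp}) are invoked; the two factors of $f(m)=(2m-2)\Gamma(2m-1)$ correspond exactly to these two requirements. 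Your plan applies polytope integrality at the wrong scale; to make it work you would have to localise to something like the paper's two-row, partition-refinement setup, at which point you would be reproving \tref{towerofpower2}.
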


Note that $m_i$ divides $m_{2k}$ for all $i<2k$, so
\tref{towerofpower} implies that $\mu(n)>k$ if $4m_{2k}$ divides $n$.
We actually prove a more general result which implies
\tref{towerofpower} but does not require the frequency squares to be
orthogonal:

\begin{theorem}\label{towerofpower2}
  If $4m_{2k}$ divides $n$, then given any set
  $\F=\{F_1,F_2,\dots,F_k\}$ of binary frequency squares of order $n$,
  there exists a binary frequency square $F$ which is orthogonal to every
  frequency square in $\F$.
\end{theorem}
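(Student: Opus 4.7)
My plan is to reformulate the statement as the existence of an integer point in a rational convex polytope. For an $n\times n$ real matrix $X=[x_{ij}]$, define $P_\F \subset \R^{n\times n}$ by the constraints $0\le x_{ij}\le 1$, $\sum_i x_{ij}=\sum_j x_{ij}=n/2$, and $X\circ F_t=n^2/4$ for each $F_t\in\F$. A binary frequency square orthogonal to every member of $\F$ is exactly a $\{0,1\}$-valued point of $P_\F$, and the polytope is nonempty because the centroid $\tfrac{1}{2}J_n$ always lies in it. So the goal reduces to showing that $P_\F$ contains an integer point when $4m_{2k}\mid n$.

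I would argue by induction on $k$, with the base case $k=0$ immediate: any binary circulant is a binary frequency square and exists for every even $n$. For the inductive step, fix $\F=\{F_1,\dots,F_k\}$ with $4m_{2k}\mid n$. Since $m_{2k-2}\mid m_{2k}$, the inductive hypothesis applied to $\F\setminus\{F_k\}$ yields a binary frequency square $F'$ orthogonal to $F_1,\dots,F_{k-1}$. It remains to modify $F'$ by some integer matrix $\Delta$, with $F'+\Delta\in\{0,1\}^{n\times n}$, so that $\Delta$ has zero row and column sums, satisfies $\Delta\circ F_t=0$ for every $t<k$, and corrects the inner product $F'\circ F_k$ to exactly $n^2/4$.

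The admissible moves $\Delta$ form a sublattice $\Lambda$ of the integer matrices with zero row and column sums. A convenient generating set for the ambient lattice consists of elementary $2\times 2$ intercalate swaps of the pattern $\bigl(\begin{smallmatrix}0&1\\1&0\end{smallmatrix}\bigr)\leftrightarrow\bigl(\begin{smallmatrix}1&0\\0&1\end{smallmatrix}\bigr)$, each of which preserves $\{0,1\}$-validity locally and shifts $F'\circ F_k$ by the integer $\pm(F_k[i_1,j_1]-F_k[i_1,j_2]-F_k[i_2,j_1]+F_k[i_2,j_2])\in\{-2,-1,0,1,2\}$. Imposing the orthogonality to $F_1,\dots,F_{k-1}$ cuts the generating set down to integer combinations lying in $\Lambda$, and the task is to realise the integer correction $n^2/4-F'\circ F_k$ as a sum of elements of $\Lambda$ that remain $\{0,1\}$-valid.

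The main obstacle—and the reason for the particular recursion defining $m_i$—is arithmetic: one must show that the shifts in $X\circ F_k$ achievable by elements of $\Lambda$ that preserve the $\{0,1\}$ constraint form a sublattice of $\Z$ whose index divides a bounded integer. This is where the machinery of \sref{s:polytope} does the work. Considering the rational slice of $P_\F$ obtained by fixing all but the final orthogonality constraint, the denominators of its vertices can be bounded via Cramer's rule in terms of minors of the coefficient matrix built from $F_1,\dots,F_{k-1}$; iterating such sub-determinant bounds produces exactly the factor $2m_i(m_i-1)\Gamma(2m_i-1)$ at stage $i$. The hypothesis $4m_{2k}\mid n$ then guarantees sufficiently many disjoint sub-blocks of $N(n)\times N(n)$ on which $\Lambda$ achieves every residue class of the correction, so concatenating suitable moves produces the required orthogonal mate $F$.
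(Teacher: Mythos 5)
Your reduction to finding an integer point of $P_\F$ is a fair restatement, and the base case is fine, but the inductive step has a genuine gap at its only substantive point. Having produced $F'$ orthogonal to $F_1,\dots,F_{k-1}$, you must show that some $\Delta$ in the lattice $\Lambda$ with $F'+\Delta\in\{0,1\}^{n\times n}$ realises the exact correction $n^2/4-F'\circ F_k$. The set of shifts achievable subject to the $\{0,1\}$ constraint is a bounded finite set of integers, not a sublattice of $\Z$, so bounding the ``index'' of a lattice tells you nothing about whether the particular value $n^2/4-F'\circ F_k$ (which can be of order $n^2$) is attained; and the claim that Cramer's-rule denominator bounds on vertices of a slice of $P_\F$ ``produce exactly the factor $2m_i(m_i-1)\Gamma(2m_i-1)$'' is asserted, not argued. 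There is no reason the minors of a coefficient matrix built from arbitrary binary frequency squares $F_1,\dots,F_{k-1}$ should be controlled by that recursion, and nothing in your outline shows that the required residue classes are achieved on ``disjoint sub-blocks'' while simultaneously preserving the $k-1$ earlier orthogonalities. A further structural mismatch: your induction adds one square per step, whereas the quantity $m_{2k}$ in the hypothesis reflects $2k$ separate constraints (two rows times $k$ squares), so a one-step-per-square repair cannot naturally account for the stated divisibility.

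For comparison, the paper never constructs an intermediate square and repairs it. It builds $F$ two rows at a time: for each pair of rows it maintains an equipartition of the columns and, over $2k$ inductive stages (one per pair (row, square)), coarsens that partition so that each part is split evenly by every row of every $F_j$. The polytope machinery is applied not to the large polytope $P_\F$ but to a small $(2m-1)$-dimensional polytope $\PP(b)$ recording the multiset of part-profiles; the factor $f(m)=(2m-2)\Gamma(2m-1)$ comes from making $\PP(b)$ integral ($\Gamma(2m-1)$) and from the integer decomposition property of $(d-1)\PP$ for a $d$-dimensional integral polytope ($2m-2$), which is where the recursion $m_{i+1}=m_if(m_i)$ actually originates. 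Your proposal does not engage with this mechanism, and as written the key step would fail.
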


Since $\log(\Gamma(m))\sim m$ (see, for example, \cite{RS}),
the sequence $\{m_i\}$
grows asymptotically faster than the tetration (iterated
exponentiation) of $k$ base $e$; so certainly $n\gg k$.

In the remainder of the paper, given arrays $L_1$, $L_2,\dots,L_k$ of
the same dimensions, the {\em join} $L_1\oplus L_2\oplus \dots\oplus L_k$
is defined to be the array obtained by overlapping these arrays;
that is, the array in which cell $(r,c)$ contains the ordered
$k$-tuple $(L_1[r,c],L_2[r,c],\dots,L_k[r,c])$.  Also, given rows
$r_1$ and $r_2$ of any rectangular array $L$, we use $L(r_1,r_2)$ to denote
the two-rowed array in which the first row is equal to row $r_1$ of $L$
and the second row is equal to row $r_2$ of $L$.
We start with an elementary lemma that gives a strategy for constructing
orthogonal mates for frequency squares, two rows at a time.

\begin{lemma}\label{peazy}
Let $F$ be a frequency square of type $(n;n/2)$ and let ${\mathcal R}$
be a partition of the rows of $F$ into pairs.  Suppose there
exists a binary $n\times n$ array $F'$ such that for each
$\{r_1,r_2\}\in {\mathcal R}$:
\begin{itemize}
\item each row of $F'(r_1,r_2)$ contains $n/2$ zeros and $n/2$ ones;  
\item each column of $F'(r_1,r_2)$ contains $1$ zero and $1$ one;
\item within $F(r_1,r_2)\oplus F'(r_1,r_2)$, each of the pairs
  $(0,0)$, $(0,1)$, $(1,0)$ and $(1,1)$ occurs $n/2$ times.  
\end{itemize}
Then $F$ and $F'$ are orthogonal frequency squares of type $(n;n/2)$. 
\end{lemma}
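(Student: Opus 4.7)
The plan is to verify directly that $F'$ is a binary frequency square and that it is orthogonal to $F$ by exploiting the fact that $\mathcal{R}$ is a partition: every row (and hence every column position of every row) lies in exactly one pair, so local counts on each pair $(r_1,r_2)$ can be summed over $\mathcal{R}$ to yield global counts. Since the partition has $n/2$ pairs, each global count is $n/2$ times the corresponding local count.

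First I would check that $F'$ is a frequency square of type $(n;n/2)$. The row frequencies are immediate: the first hypothesis gives that each of $r_1$ and $r_2$ has $n/2$ zeros and $n/2$ ones, and since every row of $F'$ belongs to exactly one pair in $\mathcal{R}$, every row of $F'$ has the required symbol distribution. The column frequencies require summing: fix a column $c$; for each pair $\{r_1,r_2\}\in\mathcal{R}$, the second hypothesis contributes exactly one $0$ and one $1$ to column $c$, and summing over the $n/2$ pairs gives $n/2$ zeros and $n/2$ ones in column $c$ of $F'$.

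Next I would verify orthogonality. I want to show that each ordered pair $(a,b)\in\{0,1\}^2$ occurs exactly $(n/2)^2=n^2/4$ times in the superposition $F\oplus F'$. The third hypothesis says that inside the $2\times n$ subarray $F(r_1,r_2)\oplus F'(r_1,r_2)$ each pair occurs $n/2$ times, and summing this count over the $n/2$ pairs in $\mathcal{R}$ (which partition the rows and therefore partition the cells of the $n\times n$ array) gives exactly $(n/2)(n/2)=n^2/4$ occurrences of each pair in $F\oplus F'$. Since $F$ has row and column symbol frequencies equal to $n/2$ by assumption and $F'$ does by the previous paragraph, this is precisely the condition for $F$ and $F'$ to be orthogonal frequency squares of type $(n;n/2)$.

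There is no real obstacle here; the lemma is essentially a bookkeeping statement, and the only thing one needs to make explicit is that $\mathcal{R}$ partitions the rows, so that local conditions on each pair sum cleanly to the required global conditions. The three hypotheses are chosen precisely to yield, after summing over $\mathcal{R}$, the row condition for $F'$, the column condition for $F'$, and the orthogonality condition for $F$ and $F'$ respectively.
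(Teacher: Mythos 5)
Your proof is correct and is exactly the elementary bookkeeping argument the paper has in mind: the authors state this lemma without proof, and your verification (summing the local row, column, and pair counts over the $n/2$ pairs of the partition to get the global frequency and orthogonality conditions) is the intended justification. No issues.
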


Henceforth in this section, $F$ is the join of a set $\{F_1,F_2,\dots,F_k\}$
of frequency squares, each of type $(n;n/2)$. Let $r_1$, $r_2$ be two
rows of $F$ and let $\P$ be an equipartition of the columns
of $F(r_1,r_2)$.  We say that $\P$ is {\em good} with
respect to row $r_i$ and square $F_j$ (where $i\in \{1,2\}$ and
$1\leq j\leq k$) if:
(a) $|\P|$ is even and each element of $\P$ has
even cardinality; and 
(b) for each $P\in \P$, the
number of columns $c$ in $P$ with $F_j[r_i,c]=1$ is equal to $|P|/2$. 
Note that (b) implies that for each $P\in \P$, the
number of columns $c$ in $P$ with $F_j[r_i,c]=0$ is also equal to $|P|/2$. 

\begin{lemma}\label{twoatatime}
Let $F$ be the join of a set $\F=\{F_1,F_2,\dots,F_k\}$
of frequency squares, each of type $(n;n/2)$ 
and let ${\mathcal R}$ be a partition of the rows of $F$ into
pairs. Suppose that for each $\{r_1,r_2\}\in {\mathcal R}$, there
exists an equipartition $\P$ of the columns of $F(r_1,r_2)$
such that $\P$ is {\em good} with respect to row $r_i$ and
square $F_j$ for each $i\in \{1,2\}$ and $1\leq j\leq k$.  Then there
exists a binary frequency square $F'$ orthogonal to each frequency square in 
$\F$. 
\end{lemma}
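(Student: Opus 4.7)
The plan is to build $F'$ one row-pair at a time by applying \lref{peazy} to each $F_j$ separately. For each $\{r_1,r_2\}\in{\mathcal R}$ with associated good equipartition $\P$, I will construct a $2\times n$ binary array $G=F'(r_1,r_2)$ satisfying the three bullets of \lref{peazy} simultaneously with respect to every $F_j$. Since the rows of $F$ are partitioned into pairs by ${\mathcal R}$, assembling these $G$'s across all pairs yields a binary $n\times n$ matrix $F'$ whose row and column distributions are correct (by the first two bullets) and which is orthogonal to every $F_j$ (by the third bullet).

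To enforce the column bullet I set $G[r_2,c]=1-G[r_1,c]$ for every $c$; then the row bullet becomes $|S|=n/2$ where $S:=\{c:G[r_1,c]=0\}$. Writing $Z_i^{(j)}:=\{c:F_j[r_i,c]=0\}$, a short computation shows that the count of $(0,0)$ in $F_j(r_1,r_2)\oplus G$ equals $n/2+|S\cap Z_1^{(j)}|-|S\cap Z_2^{(j)}|$, and the other three pair counts have analogous forms. Hence all four pair counts are $n/2$ precisely when $|S\cap Z_1^{(j)}|=|S\cap Z_2^{(j)}|$ for every $j$, and the entire task for this row pair reduces to finding a single set $S\subseteq\{0,1,\dots,n-1\}$ with $|S|=n/2$ that satisfies this balance identity for every $j\in\{1,\dots,k\}$.

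To build such an $S$ from $\P$, note that goodness gives $|P\cap Z_1^{(j)}|=|P\cap Z_2^{(j)}|=|P|/2$ for every $P\in\P$ and every $j$, so if $S$ is a union of whole parts then the balance identity is automatic. It is therefore sufficient to select a subcollection $\mathcal Q\subseteq\P$ with $\sum_{P\in\mathcal Q}|P|=n/2$, which I would try to produce using the evenness of $|\P|$ and of each $|P|$. In cases where no whole-part subcollection has size exactly $n/2$, the fallback is to take $S$ to be a union of some parts together with a balanced partial subset $T$ of one further part $P^*$; existence of $T$ can be extracted from the observation that, by goodness, $|Z_1^{(j)}\triangle Z_2^{(j)}|$ is even on each part, so the columns of $P^*$ in the symmetric difference can be picked off in matched pairs.

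The main obstacle is the subset-sum step: showing that some valid $S$ of cardinality $n/2$ always exists. Goodness does the heavy lifting (making every union-of-parts balanced), but handling the case where no whole-part subcollection sums to $n/2$ requires a careful combinatorial argument within a single part, possibly a Hall-type exchange or a direct enumeration based on the intersection structure of the $Z_i^{(j)}$'s inside that part.
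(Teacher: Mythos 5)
Your reduction is sound and is essentially the paper's own argument: make the two new rows complementary in every column, so that the whole task for a pair $\{r_1,r_2\}$ collapses to choosing a set $S$ of $n/2$ columns that is a union of parts of $\P$; goodness then forces $|S\cap Z_1^{(j)}|=|S\cap Z_2^{(j)}|$ for every $j$, and \lref{peazy} finishes. The one place you stall --- what you call the ``subset-sum step'' --- is not actually an obstacle, and the fallback involving a balanced partial subset $T$ of a further part $P^*$ is never needed. You have overlooked that $\P$ is an \emph{equipartition}: by definition all of its parts have the same cardinality, namely $n/|\P|$, and condition (a) of goodness guarantees that $|\P|$ is even. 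Hence any subcollection $\mathcal{Q}$ consisting of exactly $|\P|/2$ of the parts covers exactly $(|\P|/2)\cdot(n/|\P|)=n/2$ columns, so a whole-part selection of the required size always exists and no part ever has to be split. This is precisely what the paper does: it splits $\P$ arbitrarily into two halves $\P_0\cup\P_1$ of equal size, writes $i$ into row $r_1$ and $1-i$ into row $r_2$ on the columns covered by the parts of $\P_i$, and invokes \lref{peazy}. With that single observation inserted in place of your final paragraph, your proof is complete and coincides with the paper's.
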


\begin{proof}
We construct $F'$ two rows at a time. Let $\{r_1,r_2\}\in {\mathcal R}$
and let $\P$ be an equipartition of the columns of $F(r_1,r_2)$ satisfying
the conditions of the lemma.  Let $\P=\P_0\cup
\P_1$ be any partition of $\P$ into two parts of
equal size.  For each $P\in\P_i$ and $i\in \{0,1\}$, place
$i$ in cell $F'[r_1,c]$ and $1-i$ in cell $F'[r_2,c]$ for
each $c\in P$.  Repeat this process for each element of ${\mathcal R}$.
The result follows from \lref{peazy}.
\end{proof}

For the rest of this section, ${\mathcal R}$ is a partition of the
rows of $F$ into pairs and $\{r_1,r_2\}\in {\mathcal R}$.
\lref{twoatatime} allows us to focus on the array $F(r_1,r_2)$.  The
next lemma is a straightforward observation.

\begin{lemma}\label{sivvable}
Let $\P$ be good with respect to row $r_i$, $i\in \{1,2\}$,
and square $F_j$, $1\leq j\leq k$, for the array $F(r_1,r_2)$.  If
$\P'$ is an equipartition coarser than $\P$ and
$|\P'|$ is even, then $\P$ is also good with
respect to row $r_i$ and square $F_j$.
\end{lemma}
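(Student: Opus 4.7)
The plan is to verify directly that $\P'$ satisfies conditions (a) and (b) of the definition of goodness (reading the conclusion of the lemma as ``$\P'$ is also good'', since $\P$ is good by assumption and so the stated conclusion would be vacuous). The crucial observation is that since $\P'$ is coarser than $\P$, each part $P' \in \P'$ decomposes as a disjoint union $P' = P_{a_1} \cup P_{a_2} \cup \dots \cup P_{a_t}$ of parts in $\P$, and both defining conditions of goodness are additive over such a decomposition.

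For condition (a), $|\P'|$ is even by hypothesis, and for any $P' \in \P'$ we have $|P'| = \sum_{s=1}^{t} |P_{a_s}|$, which is a sum of even integers by condition (a) applied to $\P$, hence even. For condition (b), fix $P' \in \P'$ with the decomposition above. The number of columns $c \in P'$ with $F_j[r_i,c] = 1$ is the sum over $s$ of the corresponding counts in each $P_{a_s}$; applying condition (b) of goodness of $\P$ to each $P_{a_s}$ then gives a total of $\sum_{s=1}^{t} |P_{a_s}|/2 = |P'|/2$, as required.

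There is essentially no obstacle: the lemma is a bookkeeping observation that the defining properties of goodness are stable under coarsening, provided only that the coarser partition retains an even number of parts. Note that the hypothesis that $\P'$ is an equipartition (all parts having common size) plays no role in the verification itself; it is included so that $\P'$ is of the type required by the subsequent application in \lref{twoatatime}.
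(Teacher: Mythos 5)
Your proof is correct: the paper offers no written proof for this lemma (it is introduced as ``a straightforward observation''), and your additivity argument over the decomposition of each part of $\P'$ into parts of $\P$ is exactly the intended verification, including the correct reading of the conclusion as ``$\P'$ is also good'' (the statement's final ``$\P$'' is a typo). Your closing remark that the equipartition hypothesis is not needed for the verification itself is also accurate.
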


Let $f(1) = 2$ and $f(m)=(2m-2)\Gamma(2m-1)$ for $m \geq 2$.
Informally, the following lemma states that if $\beta$ is divisible by $f(m)$, 
then we can partition any
integer partition of $2m\beta$ with maximum part size $2m$ and average
part size $m$ into $2\beta/f(m)$ integer partitions of $mf(m)$, with
average part size $m$ in each of the smaller partitions.  For the purposes
of motivation, we will apply \lref{problem1} before proving it in the
next section, using the theory of integral convex polytopes.

\begin{lemma}\label{problem1}
Let $m\geq 1$ and $\beta$ be integers such that
$f(m)$ divides $\beta$. Then for any non-negative
integers $x_0,x_1,\dots,x_{2m}$ such that
\begin{equation}\label{e:condforporb}
\sum_{i=0}^{2m} ix_{i} =2m\beta;\quad \sum_{i=0}^{2m} x_i=2\beta 
\end{equation}
there exist non-negative integers $x_{i,j}$, for $0\leq i\leq 2m$ and 
$1\leq j\leq 2\beta/f(m)$ such that 
\begin{equation}\label{e:condforprobsoln}
\sum_{i=0}^{2m} ix_{i,j}= mf(m);\quad    \sum_{i=0}^{2m} x_{i,j}=f(m)
\end{equation}
for each  $1\leq j\leq 2\beta/f(m)$ 
and 
$$\sum_{j=1}^{2\beta/f(m)} x_{i,j}=x_i$$
for each $0\leq i\leq 2m$. 
\end{lemma}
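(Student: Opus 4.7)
The plan is to translate the lemma into a lattice-point decomposition problem for a specific integral convex polytope, and then to appeal to the theory developed in the following section.

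Set
$$P_m=\Bigl\{y\in\R^{2m+1}_{\ge 0}:\sum_{i=0}^{2m}y_i=f(m),\ \sum_{i=0}^{2m}iy_i=mf(m)\Bigr\},$$
and let $N=2\beta/f(m)$, which is a positive integer by the hypothesis $f(m)\mid\beta$. Hypothesis \eref{e:condforporb} is equivalent to saying that $(x_0,\dots,x_{2m})$ is a lattice point of the dilate $NP_m$, and the existence of the claimed integers $x_{i,j}$ is equivalent to writing this lattice point as a sum of $N$ lattice points of $P_m$. So the lemma is exactly the assertion that $P_m$ has the integer decomposition property.

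Next I verify that $P_m$ is an integral polytope. Since $P_m$ is the nonnegative orthant cut by two hyperplanes, each vertex has at most two nonzero coordinates. A vertex whose nonzero coordinates lie at positions $i\le m\le j$ solves $y_i+y_j=f(m)$ and $iy_i+jy_j=mf(m)$, giving
$$y_i=\frac{f(m)(j-m)}{j-i},\qquad y_j=\frac{f(m)(m-i)}{j-i}.$$
These are nonnegative integers because $f(m)$ is divisible by every $d\in\{1,\dots,2m\}$: for $d\le 2m-1$ this holds since $d\mid\Gamma(2m-1)$, and for $d=2m$ one has $2m\mid(2m-2)\Gamma(2m-1)$ using $\gcd(m,m-1)=1$ and $m\mid\Gamma(2m-1)$. (When $i=m$ or $j=m$ the formula collapses to the central vertex $f(m)e_m$.) Thus every vertex $v_{i,j}$ of $P_m$ is a lattice point.

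Finally I establish the integer decomposition property for $P_m$, which is the principal obstacle. Because $\dim P_m=2m-1\ge 3$ when $m\ge 2$, this does not follow from the classical fact that integral polygons are normal, so the specific structure of $P_m$ must be used. I plan an induction on $N$: given a lattice point $x\in NP_m$ with $N\ge 2$, exhibit a vertex $v_{i,j}$ of $P_m$ with $v_{i,j}\le x$ coordinatewise, so that $x-v_{i,j}$ lies in $(N-1)P_m$ (the two defining equalities are automatically preserved, because $v_{i,j}$ and $x$ satisfy them with totals $f(m),mf(m)$ versus $Nf(m),Nmf(m)$), and then invoke the induction hypothesis. The delicate point — and the reason $f(m)$ is defined with such generous divisibility — is that some vertex $v_{i,j}$ always dominates $x$ in the two relevant coordinates; locating such a $v_{i,j}$ uniformly is the technical work to be carried out in \sref{s:polytope} via integer convex polytope arguments.
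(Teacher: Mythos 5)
Your setup coincides with the paper's: you recast the lemma as the assertion that the polytope $\PP(f(m))$ (your $P_m$) has the integer decomposition property, and your verification that its vertices are lattice points matches \lref{l:vert} and \Cref{cor:probisintpoly}. The problem is that you have not proved the one statement that carries all of the difficulty, namely the integer decomposition property itself; you only sketch a strategy and defer ``the technical work''. Worse, the strategy you sketch --- induct on $N$ by finding a vertex $v_{i,j}$ of $P_m$ with $v_{i,j}\le x$ coordinatewise and subtracting it --- cannot be made to work, because such a vertex need not exist. Take $m=2$, so $f(2)=12$ and the vertices of $P_2$ are $(4,0,0,8,0)$, $(6,0,0,0,6)$, $(0,6,0,6,0)$, $(0,8,0,0,4)$ and $(0,0,12,0,0)$. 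The lattice point $x=(3,5,6,9,1)$ lies in $2P_2$ (coordinate sum $24$, weighted sum $48$), yet it dominates no vertex, since $x_0<4$, $x_1<6$ and $x_2<12$. So the greedy induction step fails already at $N=2$, even though $x$ does decompose, e.g.\ as $(3,0,3,6,0)+(0,5,3,3,1)$.

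The paper closes this gap by a different mechanism. The factor $2m-2$ in $f(m)=(2m-2)\Gamma(2m-1)$ is not there for divisibility of the vertex coordinates (the factor $\Gamma(2m-1)$ already secures integrality); it is there so that $\PP(f(m))=(2m-2)\PP(\Gamma(2m-1))$ is the $(d-1)$-st dilate of an integral polytope of dimension $d=2m-1$. Theorem~1.1 of \cite{CHHH}, quoted as \tref{thm:intpolyhasidp}, states that the $(d-1)$-st dilate of any $d$-dimensional integral polytope has the integer decomposition property, and that is precisely what the paper invokes for $m\ge2$; the case $m=1$ is checked by hand. Unless you cite such a result or supply a genuinely different argument for the decomposition, your proof is incomplete at its central step.
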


\begin{proof}[Proof of \tref{towerofpower2}]
Consider the array $F(r_1,r_2)$. For $2\leq s\leq 2k$,
define $\beta_s=n/4m_s$.  Our proof will be by induction on $s$.  We first
construct an equipartition $\P$ that is good with respect to
{\em both} row $r_1$ and square $F_1$ {\em and} row $r_2$ and square
$F_1$. Here each $P\in \P$ is a pair of columns $\{c,c'\}$
such that $F_1[r_1,c]=1-F_1[r_1,c']$ and $F_1[r_2,c]=1-F_1[r_2,c']$.
The fact that $F_1$ is a frequency square ensures such a $\P$ exists
(cf. \lref{ezy} later).  Note also that $|\P|=n/2$, which by
assumption is even.

For the inductive step, assume that there exists an equipartition $\P$ which is
good with respect to row $r_i$ and square $F_j$ for a set $S$ of order
pairs $(i,j)\in \{1,2\}\times \{1,2,\dots,k\}$ such that $|S|=s$,
where $2 \leq s< 2k$ and $|\P|=2\beta_s$. 
The base case $s=2$ follows from the previous paragraph, since
$\beta_2=n/4$.

Next, let $(t,u)$ be a fixed pair in
$(\{1,2\}\times\{1,2,\dots,k\})\setminus S$. 
We will show that there exists an equipartition $\P'$ such that: (a)
$\P'$ is coarser than $\P$; (b) $|\P'|=2\beta_{s+1}$ and (c) $\P'$ is
good with respect to row $r_t$ and square $F_{u}$.  The result then
follows by induction and Lemmas~\ref{twoatatime} and~\ref{sivvable}.

Let $\beta=\beta_s$, $m=m_s$ and $\Y=\{1,2,\dots,{2\beta}\}$.  Let
$\P=\{P_j:j\in\Y\}$ be the equipartition of the columns
and for $j\in\Y$ let $y_j$ be the number of $1$'s in row $r_t$ and square
$F_{u}$ within the columns of $P_j$. Then
\begin{equation}\label{e:ys}
\sum_{j\in\Y}y_j=2m\beta
\end{equation}
and 
$0\leq y_j\leq 2m$ for $j\in\Y$.

Next, define $x_i$ to be the number of indices $j\in\Y$ such that $y_j=i$.
Thus by \eref{e:ys}:
$$\sum_{i=0}^{2m} ix_i = 2m\beta \quad\text{and}\quad \sum_{i=0}^{2m} x_i=2\beta.$$ 
By definition, 
$$\beta_{s+1}=\frac{n}{4m_{s+1}}=\frac{n}{4m f(m)}
=\frac{\beta}{f(m)},$$ 
since $m_{\ell+1} = m_\ell f(m_\ell) $ for all $\ell \geq 2$. 
Thus by \lref{problem1}, there exist 
$x_{i,j}$, for $0\leq i\leq 2m$ and $1\leq j\leq 2\beta_{s+1}$
 such that 
\begin{equation}\label{e:part}
  \sum_{i=0}^{2m} ix_{i,j}= mf(m)=m_{s+1}; \quad\sum_{i=0}^{2m} x_{i,j}=f(m)
\end{equation}
for $1\leq j\leq 2\beta_{s+1}$ and 
$$\sum_{j=1}^{2\beta_{s+1}} x_{i,j}=x_i$$
for $0\leq i\leq 2m$.

We now use this information to construct the coarser partition
$\P'$. We do this by partitioning $\Y$ into subsets $Y_j$
for $1\leq j\leq 2\beta_{s+1}$, such that
$Y_j$ contains $x_{i,j}$ indices $\ell$ such that $y_\ell=i$
for each $0\leq i\leq 2m$.  Then, for each $1\leq j\leq 2\beta_{s+1}$, let 
$$P_j':=\bigcup_{\gamma\in Y_j} P_{\gamma}.$$ From \eref{e:part}, each
$P_j'$ is the union of $f(m)$ parts of the equipartition $\P$ which
between them contain $m_{s+1} = n/4\beta_{s+1}$ ones in row $r_t$ of
square $F_{u}$.  Hence $\P'=\{P_j'\;:\; 1\leq j\leq 2\beta_{s+1}\}$ is
an equipartition of the columns which is coarser than $\P$ and is good
with respect to row $r_t$ and square $F_u$.  This completes the proof.
\end{proof}

\section{\label{s:polytope}Proof of \lref{problem1} via integral convex polytopes}

In this section we prove \lref{problem1}.  We first rephrase the
problem in terms of convex polytopes and then use a handy result on the
integer decomposition property of integral convex polytopes from \cite{CHHH}.

We begin with the following definitions.
A halfspace in $\R^n$ is a set of the form 
$\{ \mathbf{x} \in \R^n:\, \mathbf{a}\cdot \mathbf{x} \leq  b \}$ or 
$\{ \mathbf{x} \in \R^n:\, \mathbf{a}\cdot \mathbf{x} \geq  b \}$,
for a fixed vector $\mathbf{a} \in \R^n$ and real number $b$.
A convex polytope $\PP$ is an intersection of halfspaces that is bounded.
The dimension of a convex polytope $\PP \subseteq \R^n$ is the affine
dimension of $\PP$; that is, the smallest dimension $d$ such that a
translation of $\PP$ is contained in a $d$-dimensional subspace of $\R^n$.

We will be interested in the following convex polytopes.
Let $m$ be a fixed positive integer. 
For $b \in \R^{+}$, let $\PP(b)$ be the $(2m-1)$-dimensional convex polytope 
\[
\PP(b) = \left\lbrace(x_0,\dots,x_{2m})\in\R^{2m+1}:\,
\sum_{i=0}^{2m}x_i = b,\, \sum_{i=0}^{2m}ix_i = mb,\, x_i \geq 0 \text{ for } i =0,\ldots, 2m\right\rbrace.
\]

An element $v$ of a convex polytope $\PP$ is a {\em vertex} if the only way to
write $v = ax+(1-a)y$ for $a \in (0,1)$ and $x,y \in \PP$ is to put $x=v=y$.  
Let $\mathbf{e}_\ell\in\R^{2m+1}$ be the $\ell$-th standard basis vector, where
the entries are indexed from $0$. Define
$V=V(b)=\{\mathbf{v}_{i,j}:\,0 \leq i <m <j\leq 2m \} \cup \{\mathbf{v}_{m}\}$, 
where
$\mathbf{v}_{i,j} = \frac{j-m}{j-i}b \,\mathbf{e}_i+\frac{m-i}{j-i}b \,\mathbf{e}_j$
for each $0 \leq i < m < j \leq 2m$ and $\mathbf{v}_m = b\,\mathbf{e}_m$.
It is easy to check that $V\subseteq\PP(b)$.  We show
that in fact $V$ is the set of vertices of $\PP(b)$.

\begin{lemma}\label{l:vert}
The convex polytope $\PP(b)$ has vertex set $V$. 
\end{lemma}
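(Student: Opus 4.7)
The plan is to use the standard characterization that $v$ is a vertex of a convex polytope $\PP$ if and only if $v$ cannot be written as $\tfrac{1}{2}(x+y)$ with distinct $x,y\in\PP$. First I would verify $V\subseteq\PP(b)$ by a direct computation: the coefficients of $\mathbf{v}_{i,j}$ are chosen so that $\tfrac{j-m}{j-i}+\tfrac{m-i}{j-i}=1$ and $i\cdot\tfrac{j-m}{j-i}+j\cdot\tfrac{m-i}{j-i}=m$, and nonnegativity uses $i<m<j$; membership of $\mathbf{v}_m$ is trivial.

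Next, I would show each element of $V$ is a vertex. Note that $\mathbf{v}_m$ has support of size $1$ while each $\mathbf{v}_{i,j}$ has support of size exactly $2$. Suppose $v\in V$ equals $\tfrac{1}{2}(x+y)$ with $x,y\in\PP(b)$. For each index $\ell\notin\supp(v)$, the nonnegativity of $x_\ell,y_\ell$ combined with $x_\ell+y_\ell=2v_\ell=0$ forces $x_\ell=y_\ell=0$, so both $x$ and $y$ are supported inside $\supp(v)$. The two linear equalities defining $\PP(b)$, restricted to an index set of size at most $2$, have a unique solution (the matrix $\bigl(\begin{smallmatrix}1&1\\i&j\end{smallmatrix}\bigr)$ is nonsingular when $i\neq j$), so $x=y=v$, establishing the vertex property.

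For the converse, I would show any vertex $v$ of $\PP(b)$ satisfies $|\supp(v)|\le2$, from which it is routine to identify $v$ with an element of $V$: support of size $1$ forces $v=b\,\mathbf{e}_m=\mathbf{v}_m$ (from the two equalities forcing $x_m=b$ and $\ell=m$), while support of size $2$ on $\{i,j\}$ forces $v=\mathbf{v}_{i,j}$ with $i<m<j$ (positivity of both entries requires this ordering). Suppose instead that $\supp(v)$ contains three distinct indices $a,c,d$. The $2\times 3$ homogeneous system with rows $(1,1,1)$ and $(a,c,d)$ has rank $2$ since the three columns $\binom{1}{a},\binom{1}{c},\binom{1}{d}$ are pairwise linearly independent, and hence admits a nonzero kernel vector $(\delta_a,\delta_c,\delta_d)$. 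For sufficiently small $\epsilon>0$, the points $v\pm\epsilon(\delta_a\mathbf{e}_a+\delta_c\mathbf{e}_c+\delta_d\mathbf{e}_d)$ both lie in $\PP(b)$ (the equalities are preserved by construction, and the coordinates of $v$ on $\{a,c,d\}$ are bounded away from $0$), contradicting the vertex characterization.

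The main (minor) point to get right is the perturbation step in the converse; once the rank count is in place, the remaining case analysis is straightforward bookkeeping and gives $V$ as the full vertex set.
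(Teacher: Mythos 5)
Your proof is correct, and it reaches the same conclusion by a slightly different route in the converse direction. The first half (elements of $V$ are vertices) matches the paper: both arguments reduce to the observation that a point of $\PP(b)$ whose support has size at most two is pinned down uniquely by the two equality constraints, via the fact that nonnegativity forces $\supp(x),\supp(y)\subseteq\supp(v)$ in any convex combination. For the converse, the paper first shows that every point of $\PP(b)$ must have a positive coordinate at some index below $m$ and at some index above $m$ (otherwise $\sum_i ix_i>mb$ or $<mb$), uses this to locate $i<m<j$ in the support of a putative non-vertex $x$ with at least three non-zero entries, and then explicitly peels off a small multiple of the named vertex $\mathbf{v}_{i,j}$, writing $x=a\mathbf{v}_{i,j}+(1-a)x'$ and checking by computation that $x'\in\PP(b)$. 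You instead perturb along a kernel vector of the rank-two constraint matrix restricted to any three support indices; this is the standard tight-constraint characterization of vertices, and it is somewhat cleaner: the two perturbed points satisfy the equalities automatically, and no index straddling $m$ is needed in the size-$\geq 3$ case (that structure enters only in your size-two case, where positivity of the explicit solution forces $i<m<j$, recovering the same fact the paper derives from the weighted-sum inequality). The paper's version buys an explicit convex decomposition in terms of the listed vertices at the cost of a longer verification. Both arguments are complete.
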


\begin{proof}
First we show that the elements of $V$ are indeed vertices of $\PP(b)$.
All elements of $\PP(b)$ have non-negative entries, so $v = ax +(1-a)y$
with $a \in (0,1)$ and $v,x,y \in \PP(b)$ only if $\supp(x) \subseteq
\supp(v)$ and $\supp(y) \subseteq \supp(v)$.  It is easy to check that
$\mathbf{v}_m$ is the only element of $\PP(b)$ with exactly one
non-zero entry, and hence $\mathbf{v}_m$ is a vertex. Also, $\mathbf{v}_{i,j}$
with $i < m < j$ is a vertex because there is no element $v\in\PP$
distinct from $\mathbf{v}_{i,j}$ with $\supp(v) = \{i,j\}$.

Now we show that no element $x=(x_0,\ldots, x_{2m}) \in \PP(b)\setminus V$ 
is a vertex.
By the previous paragraph, $x$ has at least two non-zero
entries.  If $x_i = 0 $ for all $i<m$, then we have the contradiction
\[
\sum_{i=0}^{2m} i x_i = \sum_{i=m}^{2m} i x_i > m \sum_{i=m}^{2m} x_i = mb,
\]
since $\sum_{i=0}^{2m} x_i = b$ and $x_j > 0$ for at least one $j >m$.
Similarly, we cannot have that $x_i =0$ for all $i > m$.  In
particular, any element of $\PP(b)$ with exactly two non-zero entries
must be one of the $\mathbf{v}_{i,j}$.

So, assume that $x$ has at least 3 non-zero entries. 
By the previous argument,
there must be $i < m < j$ such that $x_i$ and $x_j$ are non-zero.
Thus, we can find $a\in(0,1)$ that is sufficiently small to ensure
that $ab\frac{j-m}{j-i} < x_i$ and 
$ab\frac{m-i}{j-i} < x_j$. 
Then we can write $x = a\mathbf{v}_{i,j}+(1-a)x'$ where
$x' = (x_0',\ldots, x_{2m}')$ with 
$x_i' = \frac{1}{1-a}(x_i -ab\frac{j-m}{j-i})$,
$x_j'=\frac{1}{1-a}(x_j - ab\frac{m-i}{j-i})$ and 
$x_\ell' = \frac{1}{1-a}x_\ell$ if $\ell\notin\{i,j\}$.
By the choice of $a$, all entries of $x'$ are non-negative. 
Moreover, $x' \in \PP$, since 
\begin{align*}
\sum_{\ell = 0}^{2m}x_\ell'
&=\frac{1}{1-a}\left(x_i -ab\frac{j-m}{j-i}\right)+ \frac{1}{1-a}\left( x_j - ab\frac{m-i}{j-i}\right)+\sum_{\ell \neq i,j}\frac{x_\ell}{1-a} \\
&=\frac{1}{1-a}\sum_{\ell = 0}^{2m}x_\ell -\frac{ab}{1-a}
= \frac{b}{1-a}- \frac{ab}{1-a} = b
\end{align*}
and
\begin{align*}
\sum_{\ell = 0}^{2m}\ell x_\ell'
&= \frac{i}{1-a}\left(x_i -ab\frac{j-m}{j-i}\right)+ \frac{j}{1-a}\left(x_j - ab\frac{m-i}{j-i}\right) + \sum_{\ell \neq i,j}\frac{\ell x_\ell}{1-a} \\
&=\frac{1}{1-a}\sum_{\ell = 0}^{2m}\ell x_\ell - \frac{abm}{1-a}
=  \frac{bm}{1-a}- \frac{abm}{1-a} = bm.
\end{align*}
Therefore, $x = a\mathbf{v}_{i,j}+(1-a)x' $ with $0 <a < 1$ and
$\mathbf{v}_{i,j},x' \in \PP$. Moreover, $\mathbf{v}_{i,j}\neq x'$
since $x$ has at least 3 non-zero entries. Thus, $x$
is not a vertex and $V$ is the vertex set of $\PP(b)$, as claimed.
\end{proof}

For a convex polytope $\PP$ and an integer $k$, let
$k\PP = \{k\alpha:\, \alpha\in \PP \}$.
A convex polytope $\PP$ is {\em integral} if every vertex of $\PP$
has integer coordinates.  We remind the reader that $\Gamma(m) =
\text{lcm}(1,\ldots,m)$. The following is immediate from \lref{l:vert}
and the fact that $c\PP(b) = \PP(bc)$ for any integers $b,c$.

\begin{corollary}\label{cor:probisintpoly}
For $m \geq 1$ if $\Gamma(2m-1)$ divides $b$, then the set $\PP(b)$ is
an integral $(2m-1)$-dimensional convex polytope.
\end{corollary}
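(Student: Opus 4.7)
The plan is to apply \lref{l:vert} together with the scaling identity $c\PP(b)=\PP(bc)$ (immediate by substitution into the defining equations and inequalities of $\PP$). By definition, $\PP(b)$ is integral exactly when every element of its vertex set has integer coordinates, so \lref{l:vert} reduces the task to inspecting the explicit list $V(b)$. The dimension claim is already built into \lref{l:vert}: the polytope sits in the $(2m-1)$-dimensional affine subspace of $\R^{2m+1}$ cut out by the two independent equations $\sum_i x_i=b$ and $\sum_i ix_i=mb$, and the vertex list exhibited there affinely spans that subspace.

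First I would dispatch $\mathbf{v}_m=b\mathbf{e}_m$, which is trivially integral once $b\in\Z$. For the remaining vertices $\mathbf{v}_{i,j}$ with $0\le i<m<j\le 2m$, the two nonzero entries are $b(j-m)/(j-i)$ and $b(m-i)/(j-i)$, so it is enough to show $(j-i)\mid b$. As $(i,j)$ runs over admissible pairs, the difference $j-i$ takes every integer value in $\{2,3,\ldots,2m\}$.

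The divisibility check is then routine. For $j-i\le 2m-1$ it is immediate from $\Gamma(2m-1)\mid b$. The only value outside this range is $j-i=2m$, which forces $(i,j)=(0,2m)$; here both nonzero coordinates collapse to $b/2$, and since $2\mid\Gamma(2m-1)$ for $m\ge 2$ we still have $2\mid b$. Alternatively, via the scaling identity one may reduce the whole statement to checking that the single polytope $\PP(\Gamma(2m-1))$ is integral, after which $\PP(b)$ is integral for every positive-integer multiple $b$. There is no substantive obstacle; the argument is an elementary case analysis, and the only mild subtlety is the endpoint $j-i=2m$ which sits just outside $\{1,\ldots,2m-1\}$ but is rescued by the factor of $2$ already baked into $\Gamma(2m-1)$.
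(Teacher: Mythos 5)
Your argument follows the paper's proof exactly: the paper also deduces the corollary directly from \lref{l:vert} together with the scaling identity $c\PP(b)=\PP(bc)$, and your explicit divisibility check on the vertices $\mathbf{v}_{i,j}$ is precisely the detail the paper leaves as ``immediate''. For $m\ge 2$ your case analysis is complete and correct.

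One point is worth flagging, and your own write-up exposes it: the extreme vertex $\mathbf{v}_{0,2m}=\frac{b}{2}(\mathbf{e}_0+\mathbf{e}_{2m})$ is handled via $2\mid\Gamma(2m-1)$, which holds only for $m\ge 2$. When $m=1$ we have $\Gamma(2m-1)=\Gamma(1)=1$, so the hypothesis places no constraint on $b$, and for odd $b$ the polytope $\PP(b)=\{(t,b-2t,t):0\le t\le b/2\}$ has the non-integral vertex $(b/2,0,b/2)$. Thus the corollary as stated is false for $m=1$ and odd $b$; this is a slip in the paper's statement rather than in your argument, and it is harmless downstream because the corollary is only ever invoked with $m>1$ (the $m=1$ case of \lref{problem1} is treated separately with $b=f(1)=2$, which is even). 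To make your proof match a correct statement you should either restrict to $m\ge 2$ or add the hypothesis that $b$ is even when $m=1$; as written, your sentence ``since $2\mid\Gamma(2m-1)$ for $m\ge 2$'' silently abandons the $m=1$ case that the statement claims to cover.
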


A convex polytope $\PP \subseteq \R^n$ has the {\em integer
  decomposition property} if for all $k \geq 1$, and $\alpha \in
k\PP \cap \Z^{n}$, there is a way to write
$\alpha=\sum_{i=1}^k \alpha_i $ for some $\alpha_i \in \PP \cap \Z^n$
(such a convex polytope is also called integrally closed).  Note that if a
convex polytope $\PP$ has the integer decomposition property, then so does
$k\PP$ for any integer $k \geq 1$.  The following result is an
immediate consequence of Theorem~1.1 in \cite{CHHH}.

\begin{theorem}\label{thm:intpolyhasidp}
Let $\PP$ be an integral convex polytope of dimension $d \geq 2$. Then  
$(d-1)\PP$ has the integer decomposition property. 
\end{theorem}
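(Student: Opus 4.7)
The plan is to reconstruct the proof strategy for the integer decomposition property of $(d-1)\PP$, which is a refinement of the classical fact that $d\PP$ has IDP for any $d$-dimensional integral polytope. I would proceed in three stages.

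First, I would reduce to a single ``one-step'' decomposition. By induction on $k$, the full IDP statement follows once we know that every lattice point $\alpha \in k(d-1)\PP \cap \Z^n$ with $k\geq 2$ can be written as $\alpha = \beta + \gamma$ with $\beta \in (d-1)\PP \cap \Z^n$ and $\gamma \in (k-1)(d-1)\PP \cap \Z^n$; then repeated splitting of the $\gamma$-part handles arbitrary $k$.

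Second, for the basic geometric setup I would fix a triangulation of $\PP$ into $d$-dimensional integral simplices and locate the rational point $\alpha/(k(d-1))$ inside some simplex $\Delta$ with vertices $v_0,\dots,v_d$. Writing $\alpha=\sum_{i=0}^{d} a_i v_i$ with non-negative barycentric coefficients $a_i$ summing to $k(d-1)$, a Carath\'eodory-style construction produces candidate summands $\beta$ and $\gamma$ by separating integer and fractional parts of the $a_i$. This bookkeeping already yields the easier classical result that $d\PP$ has IDP, because the sum of the $d+1$ fractional parts is a non-negative integer bounded by $d$.

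The main obstacle, and the core of the theorem, is sharpening this bound from $d$ down to $d-1$. The improvement relies essentially on $d\geq 2$ and is achieved by perturbing the convex combination so that at least one of the fractional parts vanishes. Concretely, one exploits the freedom in the Carath\'eodory representation: when $d\geq 2$, the affine span of $v_0,\dots,v_d$ contains lattice directions along which one can shift the decomposition of $\alpha$ without leaving $\Delta$ or introducing non-integer summands, forcing one barycentric coordinate to reach the next integer boundary. This perturbation argument, which fails in dimension $1$ because there is no such direction available, is the step I expect to demand the most care and is precisely the content of Theorem~1.1 in \cite{CHHH}; combined with the one-step reduction of the first stage it delivers the stated result.
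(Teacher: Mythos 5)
The paper offers no independent proof of this theorem: it is quoted as an immediate consequence of Theorem~1.1 of \cite{CHHH}, and your proposal ultimately rests on exactly the same citation for the only genuinely hard step, so in substance the two arguments coincide. One caution about your sketch of what happens inside \cite{CHHH}: the barycentric coordinates of a point with respect to a fixed simplex of a triangulation are \emph{unique}, so there is no ``freedom in the Carath\'eodory representation'' to perturb --- the standard arguments instead triangulate $\PP$ into \emph{empty} lattice simplices and use a reflection argument to show that every lattice point of the half-open fundamental parallelepiped has height at most $d-1$ --- but since you explicitly black-box this step into the citation, exactly as the paper does, this does not invalidate your proposal.
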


We can now prove \lref{problem1}.

\begin{proof}[Proof of \lref{problem1}]
Recall that $f(1) = 2$ and $f(m)=(2m-2)\Gamma(2m-1)$ for $m \geq 2$.
Let $\beta$ be an integer such that $f(m) \mid \beta$ and
$x_0,\ldots,x_{2m}$ be non-negative integers satisfying
\eref{e:condforporb}.  By assumption, $\mathbf{x}=(x_0,\ldots, x_{2m})
\in \PP(2\beta) \cap \Z^{2m+1}$.  By \Cref{cor:probisintpoly} and
\tref{thm:intpolyhasidp}, $\PP(f(m))$ is an integral convex polytope with the
integer decomposition property, when $m>1$.  When $m=1$, observe that
$\PP(2k)= \{(a,c,a)\in \R^{3}\;:\; 2a+c=2k \text{ and } a,c \geq 0 \}$
for any $k\geq 1$ and that the vertex set of $\PP(2)$ is
$\{\mathbf{e}_0+\mathbf{e}_2,2\mathbf{e}_1\}$.  Therefore, for any
$\mathbf{y} \in \PP(2k) \cap \Z^3$,
$\mathbf{y}=a(\mathbf{e}_0+\mathbf{e}_2)+ (c/2)(2\mathbf{e}_1)$ for some
non-negative integers $a,c$ such that $2a+c=2k$ (which in particular
means that $c$ must be even). Thus, $\PP(2)$ has the integer
decomposition property, when $m=1$.  Therefore for any $m \geq 1$,
there exists
$\mathbf{x}_1,\ldots,\mathbf{x}_{2\beta/f(m)}\in\PP(f(m))\cap \Z^{2m+1}$ such that
$\mathbf{x} = \sum_{j=1}^{2\beta/f(m)} \mathbf{x}_j$.
Let $x_{i,j}$ be the $i$-th entry of
$\mathbf{x}_j$, where we index from $0$.  We show that the $x_{i,j}$
satisfy the conclusion of \lref{problem1}.  As
$\mathbf{x}_j\in\PP(f(m))\cap \Z^{2m+1}$ for each $j=1,\ldots, 2\beta/f(m)$,
$x_{0,j},\ldots, x_{2m,j}$
are non-negative integers that satisfy \eref{e:condforprobsoln}.  
The final statement in the conclusion of \lref{problem1}
is immediate from $\mathbf{x} = \sum_{j=1}^{2\beta /f(m)}\mathbf{x}_j$.
\end{proof}

\section{A non-existence result for maximal orthogonal pairs}\label{s:nomono}

\tref{towerofpower} shows that there does not exist a maximal orthogonal pair
of binary frequency squares (that is, a set of $2$-maxMOFS$(2m;m)$)
if $m$ is divisible by $48$.  We improve this significantly in this section
by proving the following:

\begin{theorem}\label{t:main}
If $m$ is even, then there does not exist a set of $2$-{\rm maxMOFS}$(2m;m)$. 
\end{theorem}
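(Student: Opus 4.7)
My plan is to apply the row-pair strategy of \lref{twoatatime}. Fix an orthogonal pair $\{F_1,F_2\}$ of binary frequency squares of order $n=2m$ with $m$ even, and consider any pair of rows $(r_1,r_2)$. Since $m$ is even, we may seek an equipartition $\P$ consisting of exactly two parts of size $m$ each, so that both $|\P|=2$ and the part size are even. The goodness conditions then reduce to the local statement: \emph{there exists $X \subseteq \{0,\dots,n-1\}$ with $|X|=m$ such that $|X \cap A_i^j|=m/2$ for each of the four sets $A_i^j=\{c:F_j[r_i,c]=1\}$, $i,j\in\{1,2\}$}. If such an $X$ is produced for every row pair, then \lref{twoatatime} supplies a frequency square $F$ orthogonal to both $F_1$ and $F_2$, contradicting maximality.

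To construct $X$, I would classify columns by their type $\tau(c)=(F_1[r_1,c],F_1[r_2,c],F_2[r_1,c],F_2[r_2,c])\in\{0,1\}^4$ and assemble the multiplicities into a $4\times 4$ matrix $M=(n_{ab,cd})$ indexed by the $F_1$-type $ab$ and the $F_2$-type $cd$. The frequency-square condition forces row marginals $(\alpha,m-\alpha,m-\alpha,\alpha)$ and column marginals $(\beta,m-\beta,m-\beta,\beta)$ for some integers $\alpha,\beta$ depending on the row pair. The subset $X$ is then specified by choosing a nonnegative integer matrix $x\le M$ (entrywise) with row marginals $(m/2-\gamma,\gamma,\gamma,m/2-\gamma)$ and column marginals $(m/2-\delta,\delta,\delta,m/2-\delta)$ for some integers $\gamma,\delta$. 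The fractional choice $x=M/2$ certifies that the associated polytope of real solutions is non-empty.

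The principal obstacle is the passage from a fractional to an \emph{integer} point: capacitated multi-dimensional transportation polytopes are not in general integral, and direct inspection of the marginal data turns up potential obstructions at small $n$ (e.g.\ $n=4$ with column-types forming an affine subgroup of $(\mathbb{F}_2)^4$, for which every valid choice of $(\gamma,\delta)$ forces $x$ to take half-integer values). The decisive input is the orthogonality of $F_1$ and $F_2$---the feature distinguishing $k=2$ from the more general setting of \tref{towerofpower2}---which imposes extra structure on $M$ and, as one can verify in small cases, rules out precisely these obstructive configurations. A case analysis on the parities of $\alpha$ and $\beta$, together with a Hall- or flow-type argument on the residual capacitated bipartite network between $F_1$-types and $F_2$-types, then delivers an integer $x$ for every realisable $M$, producing the required balanced halving $X$ and completing the proof.
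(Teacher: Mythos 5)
Your reduction of the problem to a per-pair condition is the right starting point, and your identification of the obstacle as an integrality question for a small capacitated transportation problem matches the paper's \lref{lem:balanz} exactly. However, the decisive claim in your final paragraph --- that the orthogonality of $F_1$ and $F_2$ rules out all configurations $M$ for which no integer halving $X$ exists --- is false, and this is precisely where the whole difficulty of the theorem lives. Orthogonality is a single global linear condition on all $n^2$ cells; it places essentially no constraint on the $4\times4$ column-type matrix of one fixed pair of rows. The paper's \lref{lem:exceptions} classifies the non-balanceable pairs completely and finds six exceptional families $E_1,\dots,E_6$, parametrised by $\lfloor n/6\rfloor$ and $\lfloor n/8\rfloor$; these occur for all $n\equiv2,4\pmod 6$ and $n\equiv4\pmod8$, not just at small orders, and they can and do arise inside orthogonal pairs. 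The paper explicitly notes that orthogonality is \emph{not} used in deriving these exceptions (it is only invoked much later, in \lref{dubya} and \lref{lem:numofsimtype}, to bound globally how many rows of a given exceptional type can coexist). Your proposal, if it worked, would prove the stronger statement that \emph{any} two binary frequency squares of doubly even order have a common orthogonal mate via a pairwise row matching; the paper only obtains that stronger conclusion when $n\equiv0\pmod{24}$ (\Cref{cy:mod242}), exactly because that is when the exceptional configurations cannot arise.

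What is missing is therefore the entire mechanism for handling pairs of rows that do \emph{not} balance: the paper introduces $(p,q)$-balanceability (near-balancing with a prescribed surplus), shows the exceptional pairs are $(0,1)$- and $(1,0)$-balanceable, and combines several bad pairs into balanceable sets of four or six rows via \lref{lem:balsets}; it then classifies non-balanceable rows into types $\alpha,\beta,\gamma$, proves that one cannot have too many pairwise non-balanceable rows of a given type (Lemmas~\ref{lem:4alpharows}--\ref{lem:3gammarows}), and finishes with a matching/re-pairing argument (using Tutte's theorem in one case) to partition all rows into balanceable sets. None of this is a routine ``Hall- or flow-type argument''; it is the bulk of the proof, and your proposal does not contain a substitute for it.
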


For the remainder of the paper, $F_1$ and $F_2$ are binary frequency
squares of order $n=2m$.  Initially we do \emph{not} assume that $F_1$
and $F_2$ are orthogonal. It is plausible that in some application one
might need a frequency square that is orthogonal to each member of a
set of frequency squares, even though the members of that set are not
themselves orthogonal. This viewpoint does materially change what is
possible. For example, below are two superimposed triples of frequency
squares, one of type $(4;2)$, and the other of type $(6;3)$:
\begin{equation}\label{e:nonextend}
\left[
\begin{array}{cccc}
111&011&100&000\\
101&000&010&111\\
010&100&111&001\\
000&111&001&110\\
\end{array}
\right]
\qquad
\left[
\begin{array}{cccccc}
111&111&111&000&000&000\\
111&111&111&000&000&000\\
110&110&000&111&001&001\\
001&001&100&011&110&110\\
000&000&011&100&111&111\\
000&000&000&111&111&111\\
\end{array}
\right]
\end{equation}
Both of these triples are \emph{non-extendable} in the sense that there is no
frequency square of the same type that is orthogonal to all squares in the
triple. This contrasts with \tref{t:rez2}
which showed the non-existence of $3$-maxMOFS$(n;n/2)$ for $n<8$.
Of course, any set of frequency squares of type $(6;3)$ that contains a
bachelor square will be non-extendable. However, a computation
shows that the examples in \eref{e:nonextend} are the smallest non-extendable
sets of order $n\in\{4,6\}$ that do not contain a bachelor square. 

It will be convenient for us to assume that $m$ is even from now on,
although some of our statements apply also to the case when $m$ is
odd.  Since \tref{t:rez2} has completely settled the case $n=4$, we
will assume for the remainder of the paper that
\begin{equation}\label{e:assumptions}
8\le n\equiv0\pmod4,\quad x=\left\lfloor n/6\right\rfloor\geq 1, \hbox{ \ and \ \ }
y=\left\lfloor n/8\right\rfloor\geq 1.
\end{equation}

For a set of rows $S$ of a frequency
square $F$, we define $F(S)$ to be $F$ restricted to the rows in $S$.
When $S=\{r_1,r_2\}$, $F(S)$ is (equivalent to) $F(r_1,r_2)$ defined
in \sref{s:asympnonexist}.  We say that two binary arrays $L$ and $L'$
of the same dimensions are {\em orthogonal} if each of the ordered
pairs $(1,1)$, $(0,0)$, $(0,1)$ and $(1,0)$ occur the same number of
times in $L\oplus L'$, where $\oplus$ was defined in
\sref{s:asympnonexist}.
A {\em binary frequency rectangle} is any matrix of
$0$'s and $1$'s with the same number of $0$'s and $1$'s in each row
and in each column.  For an even subset $S$ of the rows of $F_1\oplus F_2$, we
say that $S$ is {\em balanceable} if there is an $|S| \times n$ binary
frequency rectangle $F$ that is orthogonal to $F_1(S)$ and $F_2(S)$.
Clearly, the union of disjoint balanceable sets is balanceable, so the
following generalisation of \lref{peazy} is immediate.

\begin{lemma}\label{lem:partofbalsetssuff}
 If there exists a partition of the rows of $F_1\oplus F_2$ into
 balanceable sets, then there is a frequency square $F$ orthogonal to
 $F_1$ and $F_2$.
\end{lemma}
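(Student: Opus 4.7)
The plan is simply to stitch the balancing rectangles together and verify that the resulting square has the required properties. Let $\{S_1,S_2,\dots,S_\ell\}$ be a partition of $N(n)$ into balanceable sets, and for each $i$ let $F^{(i)}$ be an $|S_i|\times n$ binary frequency rectangle orthogonal to both $F_1(S_i)$ and $F_2(S_i)$. Define $F$ to be the $n\times n$ binary matrix whose restriction to the rows in $S_i$ equals $F^{(i)}$; that is, $F[r,c]=F^{(i)}[r,c]$ for each $r\in S_i$ and each $c\in N(n)$.

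First I would verify that $F$ is a frequency square of type $(n;n/2)$. Each row of $F$ comes from some $F^{(i)}$, so by the frequency-rectangle condition it contains $n/2$ zeros and $n/2$ ones. For any column $c$, the number of ones in column $c$ equals the sum over $i$ of the number of ones of $F^{(i)}$ in column $c$, which is $\sum_i |S_i|/2 = n/2$, again by the frequency-rectangle condition applied column-wise to each $F^{(i)}$. Hence $F$ is binary of type $(n;n/2)$.

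Next I would verify orthogonality of $F$ with $F_j$ for $j\in\{1,2\}$. Fix $j$ and a pair $(a,b)\in\{0,1\}^2$. Since $F^{(i)}$ is orthogonal to $F_j(S_i)$, the pair $(a,b)$ occurs exactly $|S_i|n/4$ times when we superimpose $F^{(i)}$ with $F_j(S_i)$. Summing over the parts of the partition gives
\[
\sum_{i=1}^{\ell} \frac{|S_i|n}{4}=\frac{n}{4}\sum_{i=1}^{\ell}|S_i|=\frac{n^2}{4}=(n/2)^2,
\]
which is the count required for orthogonality of two frequency squares of type $(n;n/2)$. Hence $F$ is orthogonal to both $F_1$ and $F_2$.

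There is no real obstacle here: the lemma is essentially a bookkeeping consequence of the definition, and the remark preceding the statement (that the union of disjoint balanceable sets is balanceable) is exactly this stitching observation applied to the full row set.
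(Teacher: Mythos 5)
Your proposal is correct and is exactly the argument the paper has in mind: the paper dismisses the lemma as immediate from the observation that the union of disjoint balanceable sets is balanceable, and your write-up simply makes that stitching-and-counting explicit (rows give $n/2$ of each symbol from the rectangle condition, columns give $\sum_i|S_i|/2=n/2$, and the pair counts sum to $n^2/4=(n/2)^2$ as required for orthogonality of type-$(n;n/2)$ squares). No gaps; this matches the paper's (omitted) proof in substance.
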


We prove \tref{t:main} by finding a suitable partition $\mathcal{R}$ of
the rows of $F_1\oplus F_2$ into balanceable sets and applying
\lref{lem:partofbalsetssuff}.  To do this, we define tools to
analyse pairs of rows in \sref{subsec:pairsofrows}.  We
use these tools to describe all possible pairs of rows that do not
balance and classify them into several different types.  In
\sref{subsec:setsofrows}, we show that it is not possible to
have large sets of rows of a given type that pairwise do not balance.
Finally in \sref{subsec:proofofthm}, we prove \tref{t:main},
using the results of the first four subsections.

\subsection{Preliminaries}

In this subsection we define much of the notation and terminology that
will be needed later in the proof of \tref{t:main}, as well as giving
preliminary results involving those concepts. A detailed example using
these definitions and results can be found in \sref{ss:eg}.

Define $\zz(r)$ to be the number of cells in row $r$ of $F_1\oplus F_2$
which contain $(0,0)$.  Also, given two rows $r_1,r_2$ of a
frequency square $F$, let $\eta(r_1,r_2)$ be the number of columns in
$F$ containing $0$ in row $r_1$ and $r_2$.  The following lemma
is immediate from the definition of a binary frequency square.

\begin{lemma}\label{ezy}
Let $F$ and $F'$ be two binary frequency squares of the same order
$n$. Then in row $r$ of $F\oplus F'$, the number of cells
containing $(1,1)$ is $\zz(r)$, the number of cells containing $(0,1)$
is $m-\zz(r)$ and the number of cells containing $(1,0)$ is
$m-\zz(r)$.  In any rows $r_1$ and $r_2$ of $F$, the number of columns
containing $1$ in both row $r_1$ and row $r_2$ is $\eta(r_1,r_2)$, the
number of columns containing $0$ in row $r_1$ and $1$ in row $r_2$ is
$m-\eta(r_1,r_2)$ and the number of columns containing $1$ in row
$r_1$ and $0$ in row $r_2$ is $m-\eta(r_1,r_2)$.
\end{lemma}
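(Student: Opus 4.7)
The plan is straightforward: the lemma is pure counting, and I would prove both halves by writing down a small linear system in the counts of the four possible ordered pairs and solving it. I expect no genuine obstacle; the only observation one really needs is that in a binary frequency square of order $n=2m$ each row contains exactly $m$ ones and $m$ zeros.

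For the first claim I would fix the row $r$ and let $a_{00}, a_{01}, a_{10}, a_{11}$ denote the number of columns $j$ such that $(F[r,j], F'[r,j])$ equals $(0,0), (0,1), (1,0), (1,1)$ respectively. The definition of a binary frequency square yields three linear relations: the total $a_{00}+a_{01}+a_{10}+a_{11} = 2m$; counting the ones in row $r$ of $F$ gives $a_{10}+a_{11} = m$; and counting the ones in row $r$ of $F'$ gives $a_{01}+a_{11} = m$. Combined with $a_{00} = \zz(r)$ by definition, these four equations solve uniquely to yield $a_{11} = \zz(r)$ and $a_{01} = a_{10} = m - \zz(r)$, which is exactly the first assertion.

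The second claim is proved by the entirely analogous argument with rows and columns of $F$ interchanging roles. I would fix two rows $r_1, r_2$ of $F$ and let $b_{00}, b_{01}, b_{10}, b_{11}$ count the columns exhibiting each ordered pair $(F[r_1,c], F[r_2,c])$. The three constraints $b_{00}+b_{01}+b_{10}+b_{11} = 2m$, $b_{10}+b_{11} = m$ (ones in row $r_1$) and $b_{01}+b_{11} = m$ (ones in row $r_2$), together with $b_{11} = \eta(r_1,r_2)$ by definition, force $b_{01} = b_{10} = m - \eta(r_1,r_2)$, completing the proof.
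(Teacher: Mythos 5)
The paper offers no proof of this lemma (it is stated as ``immediate from the definition of a binary frequency square''), and your linear-system counting argument is exactly the intended justification; the first half is entirely correct. In the second half, however, there is a small but real slip: $\eta(r_1,r_2)$ is \emph{defined} in the paper as the number of columns containing $0$ in both rows $r_1$ and $r_2$, so the definitional identity is $b_{00}=\eta(r_1,r_2)$, whereas $b_{11}=\eta(r_1,r_2)$ is one of the three assertions you are supposed to prove. As written, you take $b_{11}=\eta(r_1,r_2)$ ``by definition'' and therefore assume one of the conclusions rather than deriving it. The repair is immediate: start from $b_{00}=\eta(r_1,r_2)$, use $b_{00}+b_{01}=m$ and $b_{00}+b_{10}=m$ (zeros in rows $r_1$ and $r_2$ respectively) to get $b_{01}=b_{10}=m-\eta(r_1,r_2)$, and then $b_{11}=m-b_{10}=\eta(r_1,r_2)$ follows from the ones count in row $r_1$. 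With that one-line correction your argument is complete and coincides with what the paper treats as obvious.
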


For integers $p$ and $q$ (which may be
negative), we say that a pair of rows $\{r_1,r_2\}$ in $F_1\oplus F_2$
is {\em $(p,q)$-balanceable} if there exists a $2\times n$ binary
frequency rectangle $F$ such that:
\begin{itemize}
\item $F_1(r_1,r_2)\oplus F$ has $m+p$ occurrences of $(0,0)$  {\em and} 
\item $F_2(r_1,r_2)\oplus F$ has $m+q$ occurrences of $(0,0)$.
\end{itemize}
By \lref{ezy}, a pair of rows $\{r_1,r_2\}$ is {\em balanceable}, if and
only if it is $(0,0)$-balanceable; for such a pair of rows, $F$ is
orthogonal to both $F_1(r_1,r_2)$ and $F_2(r_1,r_2)$.

In the above, if we swap the symbols $0$ and $1$ in $F$, then by
\lref{ezy}, $F_1(r_1,r_2)\oplus F$ and $F_2(r_1,r_2)\oplus F$ have
$m-p$ and $m-q$ occurrences of $(0,0)$, respectively.  Thus a pair of
rows is $(p,q)$-balanceable if and only if it is
$(-p,-q)$-balanceable.  The following is immediate.

\begin{lemma}\label{lem:balsets}
Let $S$ be a $2(s+t)$-set of rows of $F_1 \oplus F_2$.  Let $\mathcal{R}$
be a partition of $S$ into pairs $\{r_i,r_i'\}$ such that
$\{r_i,r_i'\}$ are $(p_{i},q_{i})$-balanceable for integers $p_i,q_i$
for $i=1,\ldots, s+t$.  If $\sum_{i=1}^s p_i = \sum_{i=s+1}^t p_i$ and  
$\sum_{i=1}^s q_i= 
\sum_{i=s+1}^t q_i$, 
then $S$ is balanceable.
\end{lemma}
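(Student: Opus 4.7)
The plan is to build the desired $|S|\times n$ binary frequency rectangle by vertically stacking the $2\times n$ witnesses guaranteed by the hypothesis, after complementing the final $t$ of them so that the signed excesses telescope. Specifically, for each $i=1,\ldots,s+t$ let $F^{(i)}$ be a $2\times n$ binary frequency rectangle witnessing that $\{r_i,r_i'\}$ is $(p_i,q_i)$-balanceable. As noted in the paragraph preceding the lemma, swapping $0$ and $1$ in a $2\times n$ frequency rectangle negates both parameters; so I would set $G^{(i)}=F^{(i)}$ for $1\le i\le s$, let $G^{(i)}$ be the $0/1$ complement of $F^{(i)}$ for $s<i\le s+t$, and stack the $G^{(i)}$ (placing the two rows of $G^{(i)}$ in the row positions $\{r_i,r_i'\}$) to obtain an $|S|\times n$ array $F$.

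Next I would verify that $F$ is itself a binary frequency rectangle. Each row of $F$ has $n/2$ zeros and $n/2$ ones because it is a row of some $G^{(i)}$, and each column of $G^{(i)}$ contains exactly one $0$ and one $1$ (being a $2\times n$ binary frequency rectangle), so each column of $F$ contains exactly $s+t$ zeros and $s+t$ ones. Hence $F$ is an $|S|\times n$ binary frequency rectangle.

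The crux is a short count. Summing the contributions from each pair,
\[
\#\bigl\{(0,0)\text{ in }F_1(S)\oplus F\bigr\}=\sum_{i=1}^s(m+p_i)+\sum_{i=s+1}^{s+t}(m-p_i)=(s+t)m,
\]
by the hypothesis on the $p_i$. Since $F_1(S)$ and $F$ each contain $2(s+t)m$ zeros in total, the marginals of the $2\times 2$ contingency table for $F_1(S)\oplus F$ force the counts of $(0,1)$, $(1,0)$ and $(1,1)$ each to equal $(s+t)m$ as well, so $F$ is orthogonal to $F_1(S)$. The identical computation with the $q_i$'s yields orthogonality with $F_2(S)$, whence $S$ is balanceable. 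The only conceptual ingredient is the sign-flipping under complementation, which is immediate from \lref{ezy}; the remainder is routine bookkeeping and presents no real obstacle.
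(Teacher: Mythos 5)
Your proof is correct and is precisely the argument the paper intends: the lemma is stated in the paper as immediate from the preceding observation that complementing a $2\times n$ witness negates $(p,q)$, and your construction (stack the witnesses, complement the last $t$, then deduce orthogonality from the $(0,0)$-count and the marginals) is exactly that routine verification, carried out correctly.
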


To analyse a pair of rows $\{r_1,r_2\}$ in $F_1 \oplus F_2$,
we use the following definitions.  
Let $[v_i]_{i=1}^4=[(0,1),(1,0)$, $(0,0),(1,1)]$.
Define a $4\times 4$ matrix $A'=A'(r_1,r_2)=[a'_{ij}]$ by letting
$a_{ij}'$ equal the number of columns of $F_1 \oplus F_2$
in which $v_i$ occurs in the first row and $v_j$ occurs in the second row.

\lref{ezy} implies that the sum of the entries in the first row of
$A'$ equals the sum of the entries in the second row of $A'$.
Similarly, the sum of the entries in the third row of $A'$ equals
the sum of the entries in the fourth row of $A'$. Analogous properties
hold for the columns of $A'$. From $A'$ we can also
determine the number of cells containing $(0,0)$ within rows
$r_1$ and $r_2$ of $F_1\oplus F_2$. We summarise these observations in the lemma, below.

\begin{lemma}\label{lem:easyprops4by4}
Let $r_1$ and $r_2$ be two rows in $F_1 \oplus F_2$ and $A'=A'(r_1,r_2)$. 
Then, 
\begin{itemize}
\item the sum of the entries of $A'$ is $n=2m$;
\item $a_{11}'+a_{21}'+a_{31}'+a_{41}'=a_{12}'+a_{22}'+a_{32}'+a_{42}'$; 
\item $a_{11}'+a_{12}'+a_{13}'+a_{14}'=a_{21}'+a_{22}'+a_{23}'+a_{24}'$; 
\item $a_{13}'+a_{23}'+a_{33}'+a_{43}'=a_{14}'+a_{24}'+a_{34}'+a_{44}'$;
\item $a_{31}'+a_{32}'+a_{33}'+a_{34}'=a_{41}'+a_{42}'+a_{43}' +a_{44}'$;
\item  $\zz(r_1)=a_{11}'+a_{13}'+a_{31}'+a_{33}'= a_{22}'+a_{24}'+a_{42}'+a_{44}'$;
\item  $\zz(r_2)=a_{22}'+a_{23}'+a_{32}'+a_{33}'= a_{11}'+a_{14}'+a_{41}'+a_{44}'$.
\end{itemize}
\end{lemma}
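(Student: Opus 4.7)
The plan is to verify each bullet by reinterpreting the corresponding sum of entries of $A'$ as a count of columns of $F_1\oplus F_2$ satisfying a specified restriction on the entries of $F_1$ and $F_2$ in rows $r_1$ and $r_2$, and then invoking \lref{ezy}. Because each of the $n$ columns contributes to exactly one entry $a'_{ij}$ (its pair indices $i,j$ are determined by the values of $F_1$ and $F_2$ at rows $r_1,r_2$), summing all entries of $A'$ immediately yields $n=2m$, which is the first bullet.

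For bullets two through five, fixing either the row index or the column index of $A'$ restricts to columns in which one of the two pairs takes a prescribed value, so the relevant marginals of $A'$ count columns in which one square has a particular ordered pair of values in its rows $r_1,r_2$. By \lref{ezy}, applied to $F_1$ (for bullets 3 and 5) and $F_2$ (for bullets 2 and 4), the number of columns giving $(0,1)$ equals the number giving $(1,0)$ (both equal $m-\eta$), and the number giving $(0,0)$ equals the number giving $(1,1)$ (both equal $\eta$). Each of these identities is one of the four marginal equalities asserted.

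For the last two bullets, I would unpack the definition of $\zz(r_\ell)$: it counts columns $c$ for which $F_1[r_\ell,c]=F_2[r_\ell,c]=0$, so it equals the sum of $a'_{ij}$ over exactly those $i,j$ for which the coordinate of $v_i$ and of $v_j$ corresponding to row $r_\ell$ is $0$. Enumerating these index pairs directly yields the first expression claimed for $\zz(r_\ell)$. For the equivalent second expression, I would appeal to \lref{ezy} once more: the number of $(1,1)$ cells in row $r_\ell$ of $F_1\oplus F_2$ also equals $\zz(r_\ell)$, so the same analysis but with coordinate value $1$ in place of $0$ yields the alternative expression.

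There is no genuine obstacle; the argument is a careful bookkeeping exercise. The only subtlety is keeping straight which coordinate of $v_i$ (and of $v_j$) corresponds to row $r_1$ versus $r_2$ and to square $F_1$ versus $F_2$, which is fixed by the definition of $A'$.
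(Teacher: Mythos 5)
Your proposal is correct and matches the paper's own (very brief) justification: the paper derives the marginal equalities of $A'$ from \lref{ezy} and reads off $\zz(r_1)$, $\zz(r_2)$ from the definition of $A'$, exactly as you do. You also correctly resolve the one genuinely delicate point, namely that the row index of $A'$ records the pair of values of $F_1$ in rows $r_1,r_2$ and the column index records that of $F_2$ (as the worked example in the paper confirms), which is what makes bullets 3 and 5 an application of \lref{ezy} to $F_1$ and bullets 2 and 4 an application to $F_2$.
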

\noindent
We say that a $4\times 4$ matrix is {\em admissible} if it satisfies
the above equalities except possibly for the first dot point.
We will sometimes write $A'=B+C$, where $B$ and $C$ are both
admissible matrices.

Swapping the symbols in $F_1$ (respectively, $F_2$) corresponds to
applying the permutation $(12)(34)$ to the rows (respectively,
columns) of $A'$.  Swapping row $r_1$ with $r_2$ corresponds to
applying the permutation $(12)$ to both the rows and the columns of
$A'$. Finally, swapping $F_1$ with $F_2$ corresponds to taking the
transpose of $A'$.  We consider two admissible matrices $A_1'$ and
$A_2'$ to be \emph{equivalent} if $A_2'$ can be formed from $A_1'$ by
some combination of the above operations.

Given that each matrix $A'$ may be equivalent to up to $16$ matrices
satisfying \lref{lem:easyprops4by4}, we often consider a condensed
form of $A'(r_1,r_2)$ which we denote by $A(r_1,r_2)$.  Given a pair
of rows $\{r_1,r_2\}$ in $F_1 \oplus F_2$, we define a $3\times 3$
matrix $A(r_1,r_2)=[a_{ij}]$ as follows.
If $i,j\in\{1,2\}$, then $a_{ij}=a_{ij}'$. 
For $1 \leq i \leq 2$, $a_{i3}=a_{i3}'+a_{i4}'$ and 
for $1\leq j\leq 2$, $a_{3j}=a_{3j}'+a_{4j}'$. 
Finally, $a_{33}=a_{33}'+a_{34}'+a_{43}'+a_{44}'$. 
Informally, $A$ is formed from $A'$ be merging the last two rows
and the last two columns. We have opted to give the simpler notation
to this condensed format because we will use it much
more often than the $4\times4$ version.
We consider the $3 \times 3$ matrices $A_1$ and $A_2$
\emph{equivalent} if $A_2$ can be
formed from $A_1$ by some combination of swapping the first two rows,
swapping the first two columns and/or taking the transpose.
The next lemma is implied by \lref{lem:easyprops4by4}.

\begin{lemma}\label{sumz}
Let $r_1$ and $r_2$ be two rows in $F_1 \oplus F_2$ and $A=A(r_1,r_2)$. 
Then, 
\begin{itemize}
\item the sum of the entries of $A$ is $2m$;
\item $a_{11}+a_{12}+a_{13}=a_{21}+a_{22}+a_{23}=m - (a_{31}+a_{32}+a_{33})/2$; 
\item $a_{11}+a_{21}+a_{31}=a_{12}+a_{22}+a_{32}=m-(a_{13}+a_{23}+a_{33})/2$; 
\item $a_{31}+a_{32}+a_{33}\equiv a_{13}+a_{23}+a_{33}\equiv 0\pmod 2$.  
\end{itemize}
\end{lemma}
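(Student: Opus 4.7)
The plan is to deduce every bullet of \lref{sumz} directly from \lref{lem:easyprops4by4} via the merge relation defining $A$ from $A'$: namely $a_{ij}=a'_{ij}$ for $i,j\in\{1,2\}$, $a_{i3}=a'_{i3}+a'_{i4}$ and $a_{3j}=a'_{3j}+a'_{4j}$ for $i,j\in\{1,2\}$, and $a_{33}=a'_{33}+a'_{34}+a'_{43}+a'_{44}$. Each claimed identity will be a direct consequence of one or two of the identities in \lref{lem:easyprops4by4} under this merge.

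First, every entry of $A'$ appears in exactly one entry of $A$, so the sum of the entries of $A$ equals the sum of the entries of $A'$, which is $2m$ by the first bullet of \lref{lem:easyprops4by4}; this gives the first bullet.

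Next I would compute
\[
a_{31}+a_{32}+a_{33}=\sum_{i\in\{3,4\}}\sum_{j=1}^{4} a'_{ij}=2\sum_{j=1}^{4} a'_{3j},
\]
where the final equality uses the fifth bullet of \lref{lem:easyprops4by4}; this is manifestly even, which is half of the parity bullet. Combining with the total-sum identity yields
\[
(a_{11}+a_{12}+a_{13})+(a_{21}+a_{22}+a_{23})=2m-(a_{31}+a_{32}+a_{33}),
\]
while the equality $a_{11}+a_{12}+a_{13}=a_{21}+a_{22}+a_{23}$ follows from $\sum_j a'_{1j}=\sum_j a'_{2j}$, which is the third bullet of \lref{lem:easyprops4by4}; hence their common value is $m-(a_{31}+a_{32}+a_{33})/2$, which is the second bullet of \lref{sumz}. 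The third bullet and the remaining parity statement follow by the transposed argument, using the second and fourth bullets of \lref{lem:easyprops4by4} in place of the third and fifth.

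The main obstacle: there is none of mathematical substance, because the lemma is pure bookkeeping once the collapse relation between $A$ and $A'$ is written out. The only place requiring care is tracking the factor of $2$ that appears whenever two merged $A'$-entries coalesce into a single $A$-entry, and noticing that this same factor of $2$ simultaneously produces the ``$/2$'' in the row-sum and column-sum expressions and the parity conclusion in the last bullet.
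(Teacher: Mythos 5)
Your proof is correct and matches the paper's intent exactly: the paper gives no proof beyond the remark that \lref{sumz} ``is implied by \lref{lem:easyprops4by4},'' and your argument simply carries out that implication via the merge relation, with the bullet references and the bookkeeping of the factor of $2$ all checking out.
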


We can determine if a pair of rows $\{r_1,r_2\}$ is
$(p,q)$-balanceable by considering only the condensed matrix
$A(r_1,r_2)$, as the following lemma shows.

\begin{lemma}\label{lem:balanz}
Let $r_1,r_2$ be rows in $F_1 \oplus F_2$ and $A=A(r_1,r_2)=[a_{ij}]$.
Suppose there exists a $3\times 3$ matrix $B=[b_{ij}]$ such that 
\begin{itemize}
\item The sum of the entries of $B$ is $m$;
\item $b_{11}+b_{12}+b_{13}-(b_{21}+b_{22}+b_{23})=p$; 
\item $b_{11}+b_{21}+b_{31}-(b_{12}+b_{22}+b_{32})=q$; 
\item $0 \leq b_{ij}\leq a_{ij}$ for $1\leq i\leq 3$ and $1\leq j\leq 3$.  
\end{itemize}
Then the pair of rows $\{r_1,r_2\}$ is $(p,q)$-balanceable in $F_1 \oplus F_2$. 
\end{lemma}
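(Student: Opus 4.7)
I plan to prove Lemma~\ref{lem:balanz} by explicitly constructing the $2 \times n$ binary frequency rectangle $F$ from $B$. First, for each $A$-cell $(i,j)$, select a subset of $b_{ij}$ of the $a_{ij}$ columns of type $(v_i,v_j)$ in the joined array $F_1 \oplus F_2$; call the union $T_0$. Since $|T_0| = \sum_{i,j} b_{ij} = m$, defining $F[r_1,c]=0$ precisely when $c \in T_0$ and $F[r_2,c]=1-F[r_1,c]$ produces a valid $2 \times n$ binary frequency rectangle (each row has $m$ zeros and $m$ ones, and each column has one of each).

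To verify $(p,q)$-balanceability, I would track how the count $N_k$ of $(0,0)$-cells in $F_k(r_1,r_2) \oplus F$ depends on $T_0$. A short computation gives $N_k - m = \sum_{c \in T_0} \delta_k(c)$, where $\delta_k(c) = I[F_k[r_1,c]=0] - I[F_k[r_2,c]=0]$; the baseline $N_k = m$ for $T_0 = \emptyset$ follows from \lref{ezy}. I would then tabulate $(\delta_1, \delta_2)$ for each of the sixteen $A'$-subtypes $(v_i,v_j)$, observing in particular that $\delta_1 - \delta_2$ is constant within each of the nine $A$-cells. This lets me compute $\sum_{c \in T_0}(\delta_1 - \delta_2)$ directly from $B$, and it automatically matches the difference $p-q$ implied by the hypotheses, independently of any refinement choices within type-$3$ cells.

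The remaining task is to arrange the refinement within the five $A$-cells involving index $3$ (where the subtypes $v_3$ and $v_4$ split into sub-populations with distinct $\delta$-values) so that $\sum_{c \in T_0}\delta_1(c) = p$ exactly. This reduces to a single integer linear equation in free parameters $k_{13}, k_{23}, k_{31}, k_{32}, k_{33}$, each ranging over an interval determined by the $a'_{ij}$. I would solve it using the bounds $b_{ij} \leq a_{ij}$ together with the admissibility constraints on $A'$ from \lref{lem:easyprops4by4} (in particular the row/column pairings $a'_{3*}=a'_{4*}$ and $a'_{*3}=a'_{*4}$), which provide enough slack to adjust the sum within each ambiguous cell.

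The main obstacle I anticipate is the feasibility check in the final step: showing that the required value of $k_{13}+k_{23}+k_{33}-k_{31}-k_{32}$ lies within the achievable range. Handling this rigorously demands careful arithmetic and possibly a case analysis when some $a'_{ij}$ force extremal choices, but the admissibility of $A'$ should make the ranges sufficiently wide. Once feasibility is established, the construction is complete and the lemma follows.
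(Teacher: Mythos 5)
Your construction of $F$ from $B$ is exactly the one the paper uses (choose $b_{ij}$ of the $a_{ij}$ columns in each cell, put $(0,1)$ in those columns of $F$ and $(1,0)$ elsewhere), and your identity $N_k-m=\sum_{c\in T_0}\delta_k(c)$ with $\delta_k(c)=I[F_k[r_1,c]=0]-I[F_k[r_2,c]=0]$ is correct and is a clean way to organise the paper's count. However, the ``main obstacle'' you anticipate in your last two paragraphs does not exist, because the premise behind it is false. The first index of a column's type in $A'$ records the pair $v_i=(F_1[r_1,c],F_1[r_2,c])$ (this is how the worked example in \sref{ss:eg} is computed), and the condensed index $3$ merges $v_3=(0,0)$ and $v_4=(1,1)$; for \emph{both} of these subtypes the two entries are equal, so $\delta_1(c)=0$, and symmetrically for $\delta_2$ and the second index. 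Hence $\delta_1$ itself (not merely the difference $\delta_1-\delta_2$) is constant on each of the nine $A$-cells, taking the value $+1$, $-1$ or $0$ according to whether the first condensed index is $1$, $2$ or $3$, and likewise for $\delta_2$. Consequently $N_1-m=(b_{11}+b_{12}+b_{13})-(b_{21}+b_{22}+b_{23})=p$ and $N_2-m=(b_{11}+b_{21}+b_{31})-(b_{12}+b_{22}+b_{32})=q$ for \emph{every} choice of the $b_{ij}$ columns within each cell: there is no integer linear equation in $k_{13},\dots,k_{33}$ to solve, no use of the relations $a'_{3*}=a'_{4*}$ beyond what is already packaged in \lref{sumz}, and no feasibility check to perform. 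Had you actually carried out the tabulation you defer, the proof would close in one line. The paper's own proof reaches the same conclusion by summing the $(0,0)$-contributions cell by cell and simplifying via \lref{sumz}; your telescoped version is, if anything, slightly tidier, but only once the spurious refinement step is deleted.
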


\begin{proof}
For $1\leq i\leq 3$ and $1\leq j\leq 3$, partition
those columns of $F_1 \oplus F_2$ that are counted by $a_{ij}$
into sets $C_{ij}$ and $C_{ij}'$ of cardinalities $b_{ij}$ and
$a_{ij}-b_{ij}$, respectively. Such a partition exists, since
$0 \leq b_{ij}\leq a_{ij}$. 
It follows that $\{C_{ij},C_{ij}':1\leq i,j\leq 3\}$
partitions the columns of $F_1 \oplus F_2$.  We construct a $2\times 2m$
binary frequency rectangle $F$ satisfying the properties required for
$\{r_1,r_2\}$ to be $(p,q)$-balanceable, as follows.  For each column
$c$, place a $0$ in the first row and a $1$ in the second row of $F$ 
if $c \in C_{ij}$ for some $i,j$ and place a $1$ in the first row and
a $0$ in the second row of $F$, otherwise.  By \lref{sumz}, the
total number of pairs $(0,0)$ in $F_1(r_1,r_2)\oplus F$ is given by:
$$b_{11}+b_{12}+b_{13}+(a_{21}-b_{21})+(a_{22}-b_{22})+(a_{23}-b_{23})
+(a_{31}+a_{32}+a_{33})/2=m+p.$$ 
Similarly, the
total number of pairs $(0,0)$ in $F_2(r_1,r_2)\oplus F$ is given by:
\begin{equation*}
  b_{11}+b_{21}+b_{31}+(a_{12}-b_{12})+(a_{22}-b_{22})+(a_{32}-b_{32})
  + (a_{13}+a_{23}+a_{33})/2=m+q.\qedhere
\end{equation*}
\end{proof}

We say that $A=A(r_1,r_2)$ is {\em $(p,q)$-balanceable} if
$\{r_1,r_2\}$ is $(p,q)$-balanceable.  A matrix $B$ satisfying the
conditions in \lref{lem:balanz} is said to {\em $(p,q)$-balance} $A$ and
$\{r_1,r_2\}$. When $B$ $(0,0)$-balances $A$, we just say that $B$
{\em balances} $A$ and $\{r_1,r_2\}$.  

If $B$ is such that it $(p,q)$-balances $A$, then taking the
transpose, swapping the first two rows or swapping the first two
columns of $A$ and $B$, results in matrices $A'$ and $B'$,
respectively, such that $B'$ $(q,p)$-balances, $(-p,q)$-balances or
$(p,-q)$-balances $A'$, respectively.  We therefore sometimes only
need to consider $(p,q)$-balanceability up to equivalence.  In
particular, a matrix $A$ can be balanced if and only if any matrix
equivalent to $A$ can be balanced.

\subsection{\label{ss:eg}A detailed example}

We give four rows of $F_1\oplus F_2$ where $F_1$ and $F_2$ are each of
order $8$:
$$\begin{array}{r|c|c|c|c|c|c|c|c|}
\cline{2-9}
r_1 & (0,0) & (0,0) & (0,0) & (1,0) & (1,1) & (1,1) & (0,1) & (1,1) \\
\cline{2-9}
r_2 & (1,1) & (1,1) & (1,0) & (0,0) & (0,1) & (0,1) & (0,0) & (1,0) \\
\cline{2-9}
r_3 & (0,0) & (0,1) & (1,0) & (1,1) & (0,0) & (1,1) & (0,1) & (1,0) \\
\cline{2-9}
r_4 & (1,1) & (1,0) & (0,1) & (0,0) & (1,0) & (0,1) & (0,0) & (1,1) \\
\cline{2-9}
\end{array}$$

Then: 
$$
A(r_1,r_2)=\begin{array}{|c|c|c|}
\hline
2 & 0 & 1 \\
\hline
0 & 0 & 3 \\
\hline
0 & 2 & 0 \\
\hline
\end{array}
\quad 
A'(r_1,r_2)=\begin{array}{|c|c|c|c|}
\hline
2 & 0 & 1 & 0 \\
\hline
0 & 0 & 1 & 2  \\
\hline
0 & 1 & 0 & 0 \\
\hline
0 & 1 & 0 & 0 \\
\hline
\end{array}
\quad 
A(r_3,r_4)=\begin{array}{|c|c|c|}
\hline
1 & 1 & 1 \\
\hline
1 & 1 & 1 \\
\hline
1 & 1 & 0 \\
\hline
\end{array}
\quad 
A'(r_3,r_4)=\begin{array}{|c|c|c|c|}
\hline
1 & 1 & 1 & 0 \\
\hline
1 & 1 & 0 & 1  \\
\hline
0 & 1 & 0 & 0 \\
\hline
1 & 0 & 0 & 0 \\
\hline
\end{array}\,.
$$
Furthermore the matrix
$$B=\begin{array}{|c|c|c|}
\hline
1 & 0 & 1 \\
\hline
0 & 0 & 1 \\
\hline
0 & 1 & 0 \\
\hline
\end{array}$$
$(1,0)$-balances both $A(r_1,r_2)$ and $A(r_3,r_4)$, by
\lref{lem:balanz}.  Note that the pair of rows $\{r_1,r_2\}$ is not
balanceable (this is an instance of exception $E_1$ in
\lref{lem:exceptions}, which we will prove shortly).  However, by
\lref{lem:balsets}, the set of rows $S=\{r_1,r_2,r_3,r_4\}$ is
balanceable.  Indeed we exhibit a $4\times 8$ binary frequency
rectangle $F$ orthogonal to both $F_1(S)$ and $F_2(S)$:
$$
F=\begin{array}{|c|c|c|c|c|c|c|c|}
\hline
0 & 1 & 0 & 0 & 1 & 1 & 0 & 1 \\
\hline
1 & 0 & 1 & 1 & 0 & 0 & 1 & 0 \\
\hline
1 & 0 & 0 & 0 & 1 & 1 & 1 & 0 \\
\hline
0 & 1 & 1 & 1 & 0 & 0 & 0 & 1 \\
\hline
\end{array}\,.$$

\subsection{Pairs of rows that do not balance}\label{subsec:pairsofrows}
We next determine all
matrices that correspond to a pair of rows that is not balanceable.

\begin{lemma}\label{lem:exceptions}
Let $A$ be a matrix with the properties from \lref{sumz}.
Then there exists a matrix $B$ that balances $A$, {\em unless}
$A$ is equivalent to one of the follow configurations $E_i$, $1\leq
i\leq 6$, where $x$ and $y$ are defined in \eref{e:assumptions}.
$$
\begin{array}{|c|c|c|}
\hline
2x & 0  & 1 \\
\hline
  0  & 0  & 2x+1  \\
\hline
  0 & 2x & 0\\
\hline
 \multicolumn{3}{c}{E_1: n\equiv 2 \pmod 6}   
\end{array} 
\quad \quad
\begin{array}{|c|c|c|}
\hline
 2x+1 & 0 &  0  \\
\hline
 0  & 1  & 2x  \\
\hline
  0 &2x & 0 \\
\hline
 \multicolumn{3}{c}{E_2: n\equiv 2 \pmod 6}   
\end{array}
\quad \quad
\begin{array}{|c|c|c|}
\hline
 2x+1  & 0 &  0  \\
\hline
 1  & 0  & 2x  \\
\hline
 0 & 2x+2 & 0 \\
\hline
 \multicolumn{3}{c}{E_3: n\equiv 4 \pmod 6}   
\end{array}
$$
$$
\begin{array}{|c|c|c|}
\hline
 2x+1  & 0 &  0  \\
\hline
 0  &0  & 2x+1 \\
\hline
 0 & 2x+1 & 1 \\
\hline
 \multicolumn{3}{c}{E_4: n\equiv 4 \pmod 6}   
\end{array}
\quad \quad
\begin{array}{|c|c|c|}
\hline
 2y+1  & 0 &  0  \\
\hline
 2y+1  & 0  & 0  \\
\hline
 0 & 4y+2 & 0 \\
\hline
 \multicolumn{3}{c}{E_5: n\equiv 4 \pmod 8}   
\end{array}
\quad \quad
\begin{array}{|c|c|c|}
\hline
 2y+1  & 0 &  0  \\
\hline
 2y  &0  & 1 \\
\hline
 0 & 4y+1 & 1 \\
\hline
 \multicolumn{3}{c}{E_6: n\equiv 4 \pmod 8}   
\end{array}
$$
\end{lemma}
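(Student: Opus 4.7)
The plan is to use \lref{lem:balanz} to recast balanceability: the matrix $A$ is balanceable precisely when there exists an integer matrix $B=[b_{ij}]$ with $0\le b_{ij}\le a_{ij}$ for all $i,j$, entry-sum $m$, equal first- and second-row sums, and equal first- and second-column sums. This is a transportation-type feasibility problem with box constraints, which I would attack by constructing $B$ directly in the generic case and isolating the six obstructions $E_1,\dots,E_6$ in the non-generic case.

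If every entry of $A$ is even, then $B=A/2$ succeeds immediately; in general I would start from a matrix close to $A/2$ and adjust for parity. The key organizing principle is the support pattern of $A$: when sufficiently many entries of $A$ are strictly positive (say at least five), there is enough slack in the transportation polytope to freely realize suitable integer marginals $(\alpha,\alpha,m-2\alpha)$ for the rows and $(\beta,\beta,m-2\beta)$ for the columns, and an explicit $B$ can always be produced. The interesting regime is therefore matrices $A$ with four or more zero entries, which is precisely where the six exceptions live. Using the row-swap, column-swap and transpose equivalences, I would reduce to a finite list of zero-patterns, and for each use \lref{sumz} (in particular the linear and parity constraints on the row and column sums of $A$) to enumerate the admissible matrices supported on that pattern. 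For every such $A$ I would either exhibit a balancing $B$ or observe that $A$ is equivalent to one of $E_1,\dots,E_6$; the modular conditions on $n$ in the statement of each exception should fall out exactly from the divisibility constraints imposed by \lref{sumz}.

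Non-balanceability of each $E_i$ is checked directly: the many zero entries of $E_i$ force most entries of $B$ to vanish, leaving a small Diophantine system that must be shown infeasible modulo a small integer. For instance, in $E_1$ the constraints force $b_{12}=b_{21}=b_{22}=b_{31}=b_{33}=0$, $b_{11}=b_{32}$, $b_{11}+b_{13}=b_{23}$ and $2b_{23}+b_{32}=m=3x+1$, which collapse to $b_{13}=3b_{23}-3x-1\in\{0,1\}$; this is impossible since neither $0$ nor $1$ is congruent to $-1\pmod 3$. Analogous short arguments dispose of $E_2,\dots,E_6$, each producing a contradiction matched to the stated congruence class of $n$.

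The main obstacle is the exhaustive case analysis on support patterns. Even after quotienting by the symmetry group of order eight, numerous zero-patterns remain, and for each one the constraints from \lref{sumz} must be worked through by hand. Organising the enumeration so that no exceptional $A$ is overlooked, while also producing a uniform construction of a balancing $B$ in every non-exceptional configuration, is the technically demanding part of the proof.
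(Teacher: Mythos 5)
Your high-level strategy (recast via \lref{lem:balanz}, start from a matrix near $A/2$ and correct for parity, isolate the exceptions by case analysis) is the same in spirit as the paper's, but the proposal never executes the part of the argument that actually constitutes the lemma: the exhaustive verification that every non-exceptional $A$ admits a balancing $B$. The paper organises this by the parities of $a_{11},a_{12},a_{21},a_{22}$ (four main cases with many subcases, each resolved by an explicit $B$ built from floors and ceilings of $a_{ij}/2$ with small corrections), and it is precisely in tracking which half-integer corrections can be absorbed by which nonzero entries that all six exceptions emerge. Deferring this to ``organise the enumeration so that no exceptional $A$ is overlooked'' leaves the substance of the proof undone.

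More concretely, your proposed organising principle is flawed: you claim that at least five strictly positive entries gives enough slack to always produce $B$, but $E_6$ has exactly five positive entries ($2y+1$, $2y$, $1$, $4y+1$, $1$ with $y\ge1$) and is not balanceable, so the threshold as stated is false and the dichotomy between the ``generic'' and ``interesting'' regimes collapses exactly where one of the exceptions lives. The companion claim that many positive entries let you ``freely realize suitable integer marginals'' is also unsubstantiated; one must both choose the marginals (only their symmetry and total are forced) and verify feasibility of the boxed transportation problem, and the obstructions are parity obstructions rather than support-size obstructions. On the positive side, your direct verification that $E_1$ is not balanceable is correct (the forced vanishing of five entries yields $b_{13}=3b_{23}-(3x+1)\in\{0,1\}$, impossible mod $3$), and this kind of check is a worthwhile supplement since the paper's proof of this lemma only establishes the ``balanceable or exceptional'' direction; but the analogous checks for $E_2,\dots,E_6$ are merely asserted.
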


\begin{proof}
We consider cases according to the parity of $a_{11},a_{12},a_{21}$
and $a_{22}$.  For each case, we either present a matrix $B$ that
balances $A$ or show that $A$ must be equivalent to one of the
exceptional configurations in the lemma statement.
Throughout the proof, we make extensive use of \lref{sumz} and, for simplicity, 
we omit referencing the lemma every time it is used.

\noindent{\bf Case 1: $a_{11}$, $a_{12}$, $a_{21}$ and $a_{22}$
  have the same parity.}  

\noindent{\bf Case 1A: $a_{11}\=a_{12}\=a_{21}\=a_{22}\pmod 2$
  and $a_{13}\=a_{31}\pmod 2$.}\
Note that $a_{33}$ is even.
A solution for $B$ in this case is:
\def\arraystretch{1.05}

$$\begin{array}{|c|c|c|}
\hline
 \lfloor a_{11}/2 \rfloor & 
 \lceil a_{12}/2 \rceil & 
 \lceil a_{13}/2 \rceil \\
\hline
 \lceil a_{21}/2 \rceil & 
 \lfloor a_{22}/2 \rfloor & 
 \lceil a_{23}/2 \rceil \\
\hline
 \lfloor a_{31}/2 \rfloor & 
 \lfloor a_{32}/2 \rfloor & 
  a_{33}/2 \\
\hline
\end{array}\;.
$$
So, in all other cases we may assume that $a_{13}\not\=a_{31}\pmod 2$.
By transposing if necessary, we may assume that $a_{13}$ is odd and $a_{31}$ is
even.

\noindent{\bf Case 1B: $a_{11}\=a_{12}\=a_{21}\=a_{22}\pmod 2$,
  $a_{13}$ is odd and $a_{31}$ is even  and $a_{33}>0$.}\
A solution for $B$ in this case is:
$$
\begin{array}{|c|c|c|}
\hline
 \lfloor a_{11}/2 \rfloor & 
 \lceil a_{12}/2 \rceil & 
  (a_{13}-1)/2  \\
\hline
 \lceil a_{21}/2 \rceil & 
 \lfloor a_{22}/2 \rfloor & 
 (a_{23}-1)/2  \\
\hline
  a_{31}/2 & 
  a_{32}/2 & 
 a_{33}/2 +1\\
\hline
\end{array}\;.
$$

\noindent{\bf Case 1C: $a_{11}$, $a_{12}$, $a_{21}$, $a_{22}$ and $a_{13}$
  are odd, $a_{31}$ is even and $a_{33}=0$.}\
As $n \equiv 0 \pmod 4$, at least one of $a_{31}$ or $a_{32}$
is non-zero. A solution is to take $B$ equivalent to 
$$\begin{array}{|c|c|c|}
\hline
(a_{11}-1)/2 & (a_{12}-1)/2  & (a_{13}+1)/2  \\
\hline
  (a_{21}-1)/2  & (a_{22}+1)/2  & (a_{23}-1)/2  \\
\hline
  (a_{31}+2)/2 & a_{32}/2 & 0\\
\hline
\end{array}\;.$$

\noindent{\bf Case 1D: $a_{11}$, $a_{12}$, $a_{21}$, $a_{22}$ and $a_{31}$ are even, $a_{13}$ is odd and $a_{33}=0$.}\ 
If $a_{11}$ and $a_{31}$ are non-zero then a solution for $B$ is 
$$\begin{array}{|c|c|c|}
\hline
(a_{11}-2)/2 & a_{12}/2  & (a_{13}+1)/2  \\
\hline
  a_{21}/2  & a_{22}/2  & (a_{23}-1)/2  \\
\hline
  (a_{31}+2)/2 & a_{32}/2 & 0\\
\hline
\end{array}\;.$$
A similar solution exists if $a_{31} \neq 0$ and $a_{21} \neq 0$ or
$a_{32}$ and one of $a_{12}$ and $a_{22}$ are non-zero.  As
$n \equiv0 \pmod 4$, at least one of $a_{31}$ or $a_{32}$ is non-zero.
Therefore, without loss of generality, it suffices to consider the
following configuration.
$$\begin{array}{|c|c|c|}
\hline
0 & a_{12}  & a_{13} \\
\hline
  0  & a_{22}  & a_{23}  \\
\hline
  a_{31} & 0 & 0\\
\hline
\end{array}\;, 
$$
where either $a_{12}$ or $a_{22}$ is non-zero.
If $a_{12} \neq 0 $ and $a_{13} \neq 1$, then a solution for $B$ is
$$\begin{array}{|c|c|c|}
\hline
0 & (a_{12}+2)/2  & (a_{13}-3)/2 \\
\hline
  0  & a_{22}/2  & (a_{23}-1)/2  \\
\hline
  (a_{31}+2)/2 & 0 & 0\\
\hline
\end{array}\;.
$$
A similar solution exists when $a_{22} \neq 0$ and $a_{23} \neq 1$.
So, without loss of generality,
$a_{12} \neq 0$ and $a_{13} =1$.
If $a_{22} \neq 0$ (and $a_{23}=1$), then we have the following configuration: 
$$\begin{array}{|c|c|c|}
\hline
0 & a_{12}  & 1 \\
\hline
  0  & a_{12}  & 1  \\
\hline
  2a_{12} & 0 & 0\\
\hline
\end{array}\;
$$
which is a contradiction as $n \equiv 0 \pmod 4$.
So, $a_{22}=0$ and we have the exceptional case $E_1$:  
$$\begin{array}{|c|c|c|}
\hline
0 & a_{12}  & 1 \\
\hline
  0  & 0  & a_{12}+1  \\
\hline
  a_{12} & 0 & 0\\
\hline
\end{array}\;. 
$$

\noindent{\bf Case 2: Precisely 3 of $a_{11}$, $a_{12}$, $a_{21}$ and $a_{22}$ have the same parity.} 

Without loss of generality, we can assume that $a_{12},a_{21}$ and
$a_{22}$ have the same parity.  Necessarily $a_{13} \not\equiv a_{23}\pmod 2$
and $a_{31} \not\equiv a_{32} \pmod 2$ and $a_{33} \equiv 1\pmod 2$.
In particular $a_{33} \geq 1$.

\noindent{\bf Case 2A: $a_{11}$ even and $a_{12}$, $a_{21}$ and
  $a_{22}$ are odd.}\
Without loss of generality, we consider the
three cases: $a_{13}$ and $a_{31}$ are both odd; $a_{13}$ is odd and
$a_{31}$ is even; $a_{13}$ and $a_{31}$ are both even.  Solutions
for $B$ in these respective cases are:
$$\begin{array}{|c|c|c|}
\hline
 a_{11}/2  & (a_{12}+1)/2 &  (a_{13}-1)/2  \\
\hline
 (a_{21}+1)/2  & (a_{22}-1)/2  & a_{23}/2  \\
\hline
  (a_{31}-1)/2 & a_{32}/2 & (a_{33}+1)/2 \\
\hline
\end{array} \;,  
  \quad  \quad \quad 
  \begin{array}{|c|c|c|}
\hline
 a_{11}/2  & (a_{12}+1)/2 &  (a_{13}-1)/2  \\
\hline
 (a_{21}+1)/2  & (a_{22}-1)/2  & a_{23}/2  \\
\hline
  a_{31}/2 & (a_{32}+1)/2 & (a_{33}-1)/2 \\
\hline
\end{array}\;,$$
$$
\begin{array}{|c|c|c|}
\hline
 a_{11}/2  & (a_{12}+1)/2 &  a_{13}/2  \\
\hline
 (a_{21}+1)/2  & (a_{22}+1)/2  & (a_{23}-1)/2  \\
\hline
  a_{31}/2 & (a_{32}-1)/2 & (a_{33}-1)/2 \\
\hline
\end{array}\;.
$$

\noindent {\bf Case 2B: $a_{11}$ is odd, $a_{12}$, $a_{21}$ and $a_{22}$ are even
  and at least one of $a_{13}$ and $a_{31}$ is odd.}\

Without loss of generality, we can consider the two cases when
$a_{13}$ and $a_{31}$ are both odd, and when
$a_{13}$ is odd and $a_{31}$ is even.
Solutions in these respective cases are 
$$\begin{array}{|c|c|c|}
\hline
 (a_{11}+1)/2  & a_{12}/2 &  (a_{13}-1)/2  \\
\hline
 a_{21}/2  & a_{22}/2  & a_{23}/2  \\
\hline
  (a_{31}-1)/2 & a_{32}/2 & (a_{33}+1)/2 \\
\hline
\end{array}\;,
  \quad \quad \quad 
\begin{array}{|c|c|c|}
\hline
 (a_{11}+1)/2  & a_{12}/2 &  (a_{13}-1)/2  \\
\hline
 a_{21}/2  & a_{22}/2  & a_{23}/2  \\
\hline
  a_{31}/2 & (a_{32}+1)/2 & (a_{33}-1)/2 \\
\hline
\end{array}\;.$$

\noindent {\bf Case 2C: $a_{11}$ is odd and
  $a_{12}$, $a_{21}$, $a_{22}$, $a_{13}$ and $a_{31}$ are even.}\
For subcases (a)~$a_{33}>1$, (b)~$a_{22}\neq 0$, (c)~$a_{31} \neq 0$,
(d)~$a_{12} \neq 0$, $a_{21} \neq 0$, and
(e)~$a_{12}=0$, $a_{21} \neq 0$, $a_{23} \geq 3$,
respectively, solutions for $B$ are:
$$\begin{array}{|c|c|c|}
\hline
 (a_{11}+1)/2  & a_{12}/2 &  a_{13}/2  \\
\hline
 a_{21}/2  & a_{22}/2  & (a_{23}+1)/2  \\
\hline
  a_{31}/2 & (a_{32}+1)/2 & (a_{33}-3)/2 \\
\hline
\end{array}\;,
\quad \quad \quad
\begin{array}{|c|c|c|}
\hline
 (a_{11}-1)/2  & a_{12}/2 &  a_{13}/2  \\
\hline
 a_{21}/2  & (a_{22}-2)/2  & (a_{23}+1)/2  \\
\hline
  a_{31}/2 & (a_{32}+1)/2 & (a_{33}+1)/2 \\
\hline
\end{array}\;, 
$$
$$
\begin{array}{|c|c|c|}
\hline
 (a_{11}+1)/2  & a_{12}/2 &  a_{13}/2  \\
\hline
 a_{21}/2  & a_{22}/2  & (a_{23}+1)/2  \\
\hline
  (a_{31}-2)/2 & (a_{32}-1)/2 & (a_{33}+1)/2 \\
\hline
\end{array}\;,
  \quad \quad \quad
\begin{array}{|c|c|c|}
\hline
 (a_{11}+1)/2  & (a_{12}-2)/2 &  a_{13}/2  \\
\hline
 (a_{21}-2)/2  & a_{22}/2  & (a_{23}+1)/2  \\
\hline
  a_{31}/2 & (a_{32}+1)/2 & (a_{33}+1)/2 \\
\hline
\end{array}\;,  
$$
$$
\begin{array}{|c|c|c|}
\hline
 (a_{11}-1)/2  & a_{12}/2 &  a_{13}/2  \\
\hline
 (a_{21}+2)/2  & a_{22}/2  & (a_{23}-3)/2  \\
\hline
  a_{31}/2 & (a_{32}+1)/2 & (a_{33}+1)/2 \\
\hline
\end{array}\;.$$
Therefore, without loss of generality, the remaining cases are when $a_{22}=a_{31}=a_{13}=0$ and $a_{33}=1$,
and either $a_{12} =0 =a_{21}$ or $a_{12}=0$ and $a_{23}=1$.  If
$a_{12}=a_{21}=0$ then we have the following exceptional case $E_4$:
\[
\begin{array}{|c|c|c|}
\hline
 a_{11}  & 0 &  0  \\
\hline
 0  &0  & a_{11} \\
\hline
  0 & a_{11} & 1 \\
\hline
\end{array}\;.
\]

If $a_{12} =0$ and $a_{23}=1$, then we have the following exceptional
case $E_6$:
\[
\begin{array}{|c|c|c|}
\hline
 a_{11}  & 0 &  0  \\
\hline
 a_{11}-1  &0  & 1 \\
\hline
  0 & 2a_{11}-1 & 1 \\
\hline
\end{array}\;.
\]

\noindent{\bf Case 3: $a_{11} \equiv a_{22} \pmod 2$, $a_{12} \equiv a_{21} \pmod 2$ and 
$a_{11}\not\equiv a_{12}\pmod 2$.} 

Without loss of generality, we only need to consider the case when
$a_{11}$ and $a_{22}$ are odd and $a_{12}$ and $a_{21}$ are even. We
necessarily have that $a_{13} \equiv a_{23} \pmod 2$ and
$a_{31}\equiv a_{32} \pmod 2$ and $a_{33}$ is even.

\noindent{\bf Case 3A: $a_{11}$, $a_{22}$ and $a_{13}$ are odd,
  while $a_{12}$ and $a_{21}$ are even.}\ 
Solutions when $a_{31}$ is odd (respectively even) are:
$$\begin{array}{|c|c|c|}
\hline
 (a_{11}+1)/2  & a_{12}/2 &  (a_{13}-1)/2  \\
\hline
 a_{21}/2  & (a_{22}-1)/2  & (a_{23}+1)/2  \\
\hline
  (a_{31}-1)/2 & (a_{32}+1)/2 & a_{33}/2 \\
\hline
\end{array}\;,  
\quad  \quad \quad
  \begin{array}{|c|c|c|}
\hline
 (a_{11}+1)/2  & a_{12}/2 &  (a_{13}-1)/2  \\
\hline
 a_{21}/2  & (a_{22}+1)/2  & (a_{23}-1)/2  \\
\hline
  a_{31}/2 & a_{32}/2 & a_{33}/2 \\
\hline
\end{array}\;.
$$

\noindent {\bf Case 3B: $a_{11}$ and $a_{22}$ are odd, while $a_{12}$, $a_{21}$,
  $a_{13}$ and $a_{31}$ are even.}\ 
If $a_{33}\neq 0$ we have this solution
$$\begin{array}{|c|c|c|}
\hline
 (a_{11}+1)/2  & a_{12}/2 &  a_{13}/2  \\
\hline
 a_{21}/2  & (a_{22}+1)/2  & a_{23}/2  \\
\hline
  a_{31}/2 & a_{32}/2 & (a_{33}-2)/2 \\
\hline
\end{array}\;.$$
So, henceforth we assume that $a_{33}=0$.
As $n \equiv 0 \pmod 4$,  at least one of $a_{13}$ and $a_{23}$ is
non-zero and at least one of $a_{31}$ and $a_{32}$ are non-zero.  So,
without loss of generality $a_{31}\neq 0$.  If
$a_{23} \neq 0$, then a solution is
$$\begin{array}{|c|c|c|}
\hline
 (a_{11}+1)/2  & a_{12}/2 &  a_{13}/2  \\
\hline
 a_{21}/2  & (a_{22}-1)/2  & (a_{23}+2)/2  \\
\hline
  (a_{31}-2)/2 & a_{32}/2 &0 \\
\hline
\end{array}\;.$$
So, without loss of generality it remains to consider the case when
$a_{23}=a_{32}=0$ and $a_{13}\neq 0$.  If $a_{11} \geq 3$, then a
solution is
$$\begin{array}{|c|c|c|}
\hline
 (a_{11}-3)/2  & a_{12}/2 &  (a_{13}+2)/2  \\
\hline
 a_{21}/2  & (a_{22}-1)/2  & 0  \\
\hline
  (a_{31}+2)/2 &0 & 0 \\
\hline
\end{array}\;.$$
If $a_{11}=1$ and $a_{21} \neq 0$, then a solution is 
$$\begin{array}{|c|c|c|}
\hline
 (a_{11}-1)/2  & a_{12}/2 &  (a_{13}+2)/2  \\
\hline
 (a_{21}+2)/2  & (a_{22}-1)/2  & 0  \\
\hline
  (a_{31}-2)/2 &0 & 0 \\
\hline
\end{array}\;.$$
So, without loss of generality $a_{12}=0=a_{21}$ and $a_{11}=1$ and
we get the exception $E_2$: 
$$\begin{array}{|c|c|c|}
\hline
 1 & 0 &  a_{13}  \\
\hline
 0  & a_{13}+1  & 0  \\
\hline
  a_{13} &0 & 0 \\
\hline
\end{array}\;.$$

\noindent By equivalence, only the following case remains.

\noindent{\bf Case 4: $a_{11}\equiv a_{21} \pmod 2$, 
$a_{12}\equiv a_{22} \pmod 2$ and $a_{11}\not\equiv a_{12}\pmod 2$.} 

Without loss of generality, we can assume that $a_{11}$ and $a_{21}$
are odd and we necessarily have that $a_{13} \equiv a_{23} \pmod 2$,
$a_{31} \equiv a_{32} \pmod 2$ and $a_{33}$ is even.

\noindent{\bf Case 4A: $a_{11}$, $a_{21}$ and $a_{13}$ are odd, while
  $a_{12}$ and $a_{22}$ are even.}\
Solutions when $a_{31}$ is odd (respectively, even) are
$$\begin{array}{|c|c|c|}
\hline
 (a_{11}+1)/2  & a_{12}/2 &  (a_{13}-1)/2  \\
\hline
 (a_{21}+1)/2  & a_{22}/2  & (a_{23}-1)/2  \\
\hline
  (a_{31}-1)/2 & (a_{32}+1)/2 & a_{33}/2 \\
\hline
\end{array}\;,
\quad \quad \quad 
\begin{array}{|c|c|c|}
\hline
 (a_{11}+1)/2  & a_{12}/2 &  (a_{13}-1)/2  \\
\hline
 (a_{21}-1)/2  & a_{22}/2  & (a_{23}+1)/2  \\
\hline
  a_{31}/2 & a_{32}/2 & a_{33}/2 \\
\hline
\end{array}\;.$$

\noindent{\bf Case 4B: $a_{11}$ and $a_{21}$ are odd, while
  $a_{12}$, $a_{22}$ and $a_{13}$ are even and $a_{33}>0$.}\
If $a_{31}$ is odd then a solution for $B$ is
$$\begin{array}{|c|c|c|}
\hline
 (a_{11}+1)/2  & a_{12}/2 &  a_{13}/2  \\
\hline
 (a_{21}+1)/2  & a_{22}/2  & a_{23}/2  \\
\hline
  (a_{31}-1)/2 & (a_{32}+1)/2 & (a_{33}-2)/2 \\
\hline
\end{array}\;.
$$
The subcase when $a_{13}$ and $a_{31}$ are both even requires a more thorough
analysis.  A solution if $a_{13} \neq 0$ (similarly $a_{23}\neq 0$),
and $a_{31} \neq 0$ are, respectively:
$$
\begin{array}{|c|c|c|}
\hline
 (a_{11}+1)/2  & a_{12}/2 &  (a_{13}-2)/2  \\
\hline
 (a_{21}-1)/2  & a_{22}/2  & a_{23}/2  \\
\hline
  a_{31}/2 & a_{32}/2 & (a_{33}+2)/2 \\  
\hline
\end{array}\;, 
\quad \quad \quad 
\begin{array}{|c|c|c|}
\hline
 (a_{11}+1)/2  & a_{12}/2 &  a_{13}/2  \\
\hline
 (a_{21}+1)/2  & a_{22}/2  & a_{23}/2  \\
\hline
  (a_{31}-2)/2 & a_{32}/2 & a_{33}/2 \\  
\hline
\end{array}\;. 
$$
So, it remains to consider configurations of the form
$$\begin{array}{|c|c|c|}
\hline
 a_{11}  & a_{12} &  0  \\
\hline
 a_{21}  & a_{22}  & 0  \\
\hline
  0 & a_{32} & a_{33} \\
\hline
\end{array}\;.$$
Note that $a_{32}\ne 0$, since otherwise we would have the contradiction
$a_{11}=a_{22}$. If $a_{12} \neq 0$, then the following is a solution
for $B$:
$$\begin{array}{|c|c|c|}
\hline
 (a_{11}+1)/2  & (a_{12}-2)/2 &  0  \\
\hline
 (a_{21}-1)/2  & a_{22}/2  & 0  \\
\hline
  0 & (a_{32}+2)/2 & a_{33}/2 \\
\hline
\end{array}\;.$$
A similar solution exists if $a_{22} \neq 0$. The final case is when
$a_{12}=0=a_{22}$ (necessarily $a_{21}=a_{11}$ and $a_{32}=2a_{11}$).
As $n \equiv 0 \pmod 4$ and $a_{33} \neq 0$, we have $a_{33} \geq 4$ and the
following is a solution:
$$\begin{array}{|c|c|c|}
\hline
 (a_{11}+1)/2  & 0 &  0  \\
\hline
 (a_{21}+1)/2  & 0  & 0  \\
\hline
  0 & (a_{32}+2)/2 & (a_{33}-4)/2 \\
\hline
\end{array}\;.$$

\noindent{\bf Case 4C: $a_{11}$ and $a_{21}$ are odd, while
  $a_{12}$, $a_{22}$ and $a_{13}$ are even and $a_{33}=0$.}\
First consider the subcase when $a_{31}$ is odd.  As
$n\equiv 0 \pmod 4$, at least one of $a_{13}$ or $a_{23}$ is
non-zero. So, without loss of generality, $a_{13} \neq 0$ and then a
solution for $B$ is:
$$\begin{array}{|c|c|c|}
\hline
 (a_{11}+1)/2  & a_{12}/2 &  (a_{13}-2)/2  \\
\hline
 (a_{21}-1)/2  & a_{22}/2  & a_{23}/2  \\
\hline
 (a_{31}+1)/2 & (a_{32}+1)/2 & 0 \\
\hline
\end{array}\;.$$
Now we consider the case when $a_{13}$ and $a_{31}$ are both even.
As $n \equiv 0 \pmod 4$ at least one of 
$a_{31}$ and $a_{32}$ is non-zero.
If $a_{31} \neq 0$ then a solution for $B$ is 
$$\begin{array}{|c|c|c|}
\hline
 (a_{11}+1)/2  & a_{12}/2 &  a_{13}/2  \\
\hline
 (a_{21}+1)/2  & a_{22}/2  & a_{23}/2  \\
\hline
  (a_{31}-2)/2 & a_{32}/2 & 0 \\
\hline
\end{array}\;.$$
So, without loss of generality, $a_{31}=0$ and $a_{32}\neq 0$.
If $a_{12} \neq 0$, then a solution is 
$$\begin{array}{|c|c|c|}
\hline
 (a_{11}-1)/2  & (a_{12}+2)/2 &  a_{13}/2  \\
\hline
 (a_{21}+1)/2  & a_{22}/2  & a_{23}/2  \\
\hline
  0 & (a_{32}-2)/2 & 0 \\
\hline
\end{array}\;.$$
A similar solution exists when $a_{22} \neq 0$.
So, without loss of generality, let $a_{12}=a_{22}=0$. 
If $a_{13}$ and $a_{23}$ are both non-zero, then a solution is
$$\begin{array}{|c|c|c|}
\hline
 (a_{11}+1)/2  & 0 &  (a_{13}-2)/2  \\
\hline
 (a_{21}+1)/2  & 0  & (a_{23}-2)/2  \\
\hline
  0 & (a_{32}+2)/2 & 0 \\
\hline
\end{array}\;.$$
So, without loss of generality, $a_{23}=0$.
If $a_{11} \geq 3$ and $a_{13} \geq 4$, then a solution for $B$ is 
$$\begin{array}{|c|c|c|}
\hline
 (a_{11}+3)/2  & 0 &  (a_{13}-4)/2  \\
\hline
 (a_{21}-1)/2  & 0  & 0  \\
\hline
  0 & (a_{32}+2)/2 & 0 \\
\hline
\end{array}\;.$$
So either $a_{11}=1$ or $a_{13} \leq 2$.
If $a_{11}=1$, then we obtain the following exceptional case $E_3$:  
$$\begin{array}{|c|c|c|}
\hline
 1  & 0 &  a_{13}  \\
\hline
 a_{13}+1  & 0  & 0  \\
\hline
  0 & a_{13}+2 & 0 \\
\hline
\end{array}\;.$$
If $a_{13}=2$ or $a_{13}=0$ then we have following, respectively: 
$$
\begin{array}{|c|c|c|}
\hline
 a_{11}  & 0 &  2  \\
\hline
 a_{11}+2  & 0  & 0  \\
\hline
  0 & 2a_{11}+2 & 0 \\
\hline
\end{array}
\quad \text{or} \quad
\begin{array}{|c|c|c|}
\hline
 a_{11}  & 0 &  0  \\
\hline
 a_{11}  & 0  & 0  \\
\hline
  0 & 2a_{11} & 0 \\
\hline
\end{array}\;.
$$
The former is impossible as $n \equiv 0 \pmod 4$. The latter is the
exception $E_5$.
\end{proof}

Before studying the exceptional configurations of
\lref{lem:exceptions} in more detail, we first note the following
simple consequences of the lemma. We start by noting that
we have shown a special case of \tref{t:main}.

\begin{corollary}\label{cy:mod24}
  If $n \equiv 0 \pmod {24}$, then there is no set of
  $2$-{\rm maxMOFS}$(n;n/2)$.
\end{corollary}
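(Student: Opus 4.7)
The plan is to combine \lref{lem:exceptions} with \lref{lem:partofbalsetssuff} in the most direct manner. Suppose $\{F_1,F_2\}$ is a pair of binary frequency squares of type $(n;n/2)$ with $n\equiv0\pmod{24}$. I will show that $\{F_1,F_2\}$ is not maximal (in the sense of not being extendable, even if $F_1$ and $F_2$ are not themselves orthogonal, matching the viewpoint developed in this section).

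First I would inspect the six exceptional configurations $E_1,\dots,E_6$ listed in \lref{lem:exceptions}. The configurations $E_1$ and $E_2$ require $n\equiv2\pmod6$, while $E_3$ and $E_4$ require $n\equiv4\pmod6$. Since $n\equiv0\pmod{24}$ implies $n\equiv0\pmod6$, none of $E_1,E_2,E_3,E_4$ can occur. Similarly, $E_5$ and $E_6$ require $n\equiv4\pmod8$, but $n\equiv0\pmod{24}$ forces $n\equiv0\pmod8$, so these are also excluded. Consequently, for every pair of rows $\{r_1,r_2\}$ of $F_1\oplus F_2$, the associated matrix $A(r_1,r_2)$ avoids all exceptions, and therefore $\{r_1,r_2\}$ is balanceable by \lref{lem:exceptions}.

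Next, partition the $n$ rows of $F_1\oplus F_2$ into pairs in any manner; since $n$ is even this is possible. Each pair in the partition is balanceable by the previous step, so we have a partition of the rows of $F_1\oplus F_2$ into balanceable sets. Applying \lref{lem:partofbalsetssuff} produces a binary frequency square $F$ orthogonal to both $F_1$ and $F_2$, which shows that $\{F_1,F_2\}$ cannot be a maximal set of 2-MOFS$(n;n/2)$.

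There is essentially no obstacle here beyond verifying the congruence incompatibilities; the content of the corollary is precisely that $24$ is small enough to eliminate every one of the six exceptions simultaneously. The harder work lies ahead in \tref{t:main}, where the weaker hypothesis $n\equiv0\pmod4$ still permits some of $E_1,\dots,E_6$ to arise, and one must combine multiple bad pairs into larger balanceable sets using \lref{lem:balsets}; that argument is the subject of the subsequent subsections.
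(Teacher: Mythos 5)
Your proposal is correct and follows exactly the paper's own argument: the paper's proof likewise observes that every exceptional configuration in \lref{lem:exceptions} requires $n\equiv2\pmod 6$, $n\equiv4\pmod6$ or $n\equiv4\pmod8$, none of which is compatible with $n\equiv0\pmod{24}$, and then invokes \lref{lem:partofbalsetssuff}. Your write-up merely makes the congruence checks and the arbitrary pairing of rows explicit.
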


\begin{proof}
\lref{lem:exceptions} gives a complete list of configurations which
correspond to pairs of rows which are not balanceable and none can
occur if $n \equiv 0 \pmod {24}$. The result then follows by
\lref{lem:partofbalsetssuff}.
\end{proof}

Since we have not used the fact that $F_1$ is orthogonal to $F_2$ in
deriving \lref{lem:exceptions}, we have the following, more general
corollary, which improves \tref{towerofpower2} when $k=2$.

\begin{corollary}\label{cy:mod242}  
If $n \equiv 0 \pmod {24}$ and $F_1$ and $F_2$ are any frequency
squares of type $(n;n/2)$, then there exists a frequency square $F$, also
of type $(n;n/2)$, that is orthogonal to both $F_1$ and $F_2$.
\end{corollary}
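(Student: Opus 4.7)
The plan is to observe that the proof of Corollary~\ref{cy:mod24} carries through verbatim because orthogonality of $F_1$ and $F_2$ was never invoked in any of the supporting machinery. Indeed, Lemma~\ref{ezy}, Lemma~\ref{sumz}, Lemma~\ref{lem:balanz}, Lemma~\ref{lem:exceptions} and Lemma~\ref{lem:partofbalsetssuff} are all stated and established using only the hypothesis that $F_1$ and $F_2$ are binary frequency squares of the same order $n$. So I would simply reprise the earlier argument, emphasizing that the orthogonality of the input pair is not needed.

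More concretely, first I would note that when $n\equiv 0\pmod{24}$, we have $n\equiv 0\pmod 6$ and $n\equiv 0\pmod 8$, so none of the six exceptional configurations $E_1,\dots,E_6$ in Lemma~\ref{lem:exceptions} can occur (they require $n\equiv 2\pmod 6$, $n\equiv 4\pmod 6$, or $n\equiv 4\pmod 8$). Consequently, Lemma~\ref{lem:exceptions} shows that every pair of rows of $F_1\oplus F_2$ is balanceable.

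Next, I would choose any partition $\mathcal{R}$ of the $n$ rows into pairs (possible since $n$ is even). By the previous paragraph each pair in $\mathcal{R}$ is a balanceable set, so $\mathcal{R}$ is a partition of the rows of $F_1\oplus F_2$ into balanceable sets. Applying Lemma~\ref{lem:partofbalsetssuff} produces a binary frequency square $F$ of type $(n;n/2)$ orthogonal to both $F_1$ and $F_2$, as required.

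There is no real obstacle here: all the work sits in Lemma~\ref{lem:exceptions}, and that lemma was proved without ever using orthogonality of $F_1$ with $F_2$. The only thing to double-check when writing up is that none of the auxiliary definitions (balanceable, $(p,q)$-balanceable, admissible, the matrices $A$ and $A'$) silently assume orthogonality; a quick pass through Section~5.1 confirms they do not. Hence Corollary~\ref{cy:mod242} really is a strictly stronger restatement of Corollary~\ref{cy:mod24}, obtained for free from the same proof.
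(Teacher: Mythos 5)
Your proposal is correct and matches the paper's own reasoning exactly: the paper derives Corollary~\ref{cy:mod242} by observing that orthogonality of $F_1$ and $F_2$ was never used in establishing Lemma~\ref{lem:exceptions}, so the argument for Corollary~\ref{cy:mod24} (no exceptional configuration $E_1,\dots,E_6$ can occur when $n\equiv0\pmod{24}$, hence every pair of rows is balanceable and Lemma~\ref{lem:partofbalsetssuff} applies) goes through unchanged. Your additional check that the auxiliary definitions do not silently assume orthogonality is exactly the right point to verify, and it holds.
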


As all exceptional configurations listed in \lref{lem:exceptions} satisfy
$a_{33}\leq 1$, we have the following.

\begin{corollary}\label{goodcases}
Let $r$ and $r'$ be two distinct rows in $F_{1} \oplus F_2$. If there
exists two distinct columns $c$ and $c'$ such that $F[r,c]=F[r',c]$
and $F[r,c']=F[r',c']$, then the pair $\{r,r'\}$ is balanceable.
\end{corollary}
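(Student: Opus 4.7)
The plan is to derive the corollary as a direct consequence of \lref{lem:exceptions} by extracting the single invariant $a_{33}$ of the condensed matrix $A(r,r')$. First I would translate the hypothesis into the inequality $a_{33}(r,r')\ge 2$. Recall that $a_{33}=a'_{33}+a'_{34}+a'_{43}+a'_{44}$ counts the columns in which both rows $r$ and $r'$ of $F_1\oplus F_2$ carry an ordered pair from $\{(0,0),(1,1)\}$; equivalently, both rows have $F_1=F_2$ in that column. The hypothesis exhibits two distinct columns $c$ and $c'$ at which the entries in row $r$ and row $r'$ agree in this $F_1=F_2$ sense, so each contributes $1$ to $a_{33}$, giving $a_{33}\ge 2$.

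Next I would verify that $a_{33}$ is invariant under the equivalence relation on $3\times 3$ admissible matrices used in \lref{lem:exceptions}: each of the three generating operations (swapping the first two rows, swapping the first two columns, or transposing) fixes the $(3,3)$-entry. Hence $a_{33}$ is a well-defined function on equivalence classes, and in particular $a_{33}\ge 2$ is preserved by equivalence.

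A glance at the six exceptional configurations $E_1,\dots,E_6$ in \lref{lem:exceptions} shows that each satisfies $a_{33}\le 1$. Combined with the previous two steps, $A(r,r')$ cannot be equivalent to any $E_i$, so \lref{lem:exceptions} supplies a matrix $B$ that balances $A(r,r')$. Applying \lref{lem:balanz} converts $B$ into a $2\times n$ binary frequency rectangle orthogonal to both $F_1(r,r')$ and $F_2(r,r')$, witnessing that $\{r,r'\}$ is balanceable. There is no substantial obstacle; the only mild subtlety is recognising $a_{33}$ as the invariant captured by the hypothesis, after which the result is an immediate application of the exceptional list already established.
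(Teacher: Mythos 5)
Your proof is correct and is essentially the paper's own argument: the paper justifies this corollary with the single remark, stated immediately before it, that every exceptional configuration $E_1,\dots,E_6$ in \lref{lem:exceptions} satisfies $a_{33}\le 1$, which is exactly the invariant you extract. Your additional observations --- that $a_{33}$ is fixed by the equivalence operations on the condensed $3\times 3$ matrices, and that the hypothesis is to be read as both rows carrying a pair from $\{(0,0),(1,1)\}$ in columns $c$ and $c'$ (the reading under which the two columns contribute to $a_{33}$ and which matches every later application of the corollary) --- simply make explicit what the paper leaves implicit.
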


As mentioned earlier, $A(r_1,r_2)$ is sufficient to determine the
balanceability of the rows $r_1$ and $r_2$.  We say that
$A'=A'(r_1,r_2)$ is an $E_i$ if $A(r_1,r_2)$ is equivalent to $E_i$
for $i \in \{1,\ldots, 6\}$. All such matrices characterise pairs
of rows that do not balance, by \lref{lem:exceptions}. Unpacking that
result, we find that the exceptional cases are each based on
one of two underlying structures:

\begin{lemma}\label{lem:standexc}
Let
\[
D=\begin{array}{|c|c|c|c|}
\hline
2x & 0  & 0 & 0 \\
\hline
  0  & 0  & x & x  \\
\hline
  0 & x & 0 & 0 \\
\hline
0 & x & 0 & 0 \\
\hline
\end{array}
\quad \text{and} \quad 
F= \begin{array}{|c|c|c|c|}
\hline
2y & 0  & 0 & 0 \\
\hline
 2y  & 0  & 0 & 0  \\
\hline
  0 & 2y & 0 & 0 \\
\hline
0 & 2y & 0 & 0 \\
\hline
\end{array}\;.
\] 
Then up to equivalence $E_i=D+B_i$ for $i\in\{1,2,3,4\}$ and $E_i=F+B_i$ for $i\in\{5,6\}$, where
\[
B_1=\begin{array}{|c|c|c|c|}
\hline
0 & 0  & 1 & 0 \\
\hline
  0  & 0  & 0 & 1  \\
\hline
  0 & 0 & 0 & 0 \\
\hline
0 & 0 & 0 & 0 \\
\hline
\end{array}\;,
\quad 
B_2= \begin{array}{|c|c|c|c|}
\hline
1 & 0  & 0 & 0 \\
\hline
0  & 1  & 0 & 0  \\
\hline
  0 & 0 & 0 & 0 \\
\hline
0 & 0 & 0 & 0 \\
\hline
\end{array}\;,
\quad  
B_3=B_5= \begin{array}{|c|c|c|c|}
\hline
1 & 0  & 0 & 0 \\
\hline
1  & 0  & 0 & 0  \\
\hline
  0 & 1 & 0 & 0 \\
\hline
0 & 1 & 0 & 0 \\
\hline
\end{array}
\]
and 
\[
\quad 
B_4=B_6= \begin{array}{|c|c|c|c|}
\hline
1 & 0  & 0 & 0 \\
\hline
0  & 0  & 1 & 0  \\
\hline
  0 & 1 & 0 & 0 \\
\hline
0 & 0 & 0 & 1 \\
\hline
\end{array}
\quad \text{or} \quad 
\begin{array}{|c|c|c|c|}
\hline
1 & 0  & 0 & 0 \\
\hline
0  & 0  & 0 & 1  \\
\hline
  0 & 1 & 0 & 0 \\
\hline
0 & 0 & 1 & 0 \\
\hline
\end{array}\;.
\] 
\end{lemma}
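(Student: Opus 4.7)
The plan is to show, case by case for each $i\in\{1,\ldots,6\}$, that every $4\times 4$ admissible matrix $A'=A'(r_1,r_2)$ whose condensation $A$ is equivalent to $E_i$ is itself equivalent to the claimed decomposition $D+B_i$ (for $i\le 4$) or $F+B_i$ (for $i\in\{5,6\}$).

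The starting observation is that $A$ is obtained from $A'$ by merging the last two rows and last two columns, and each $E_i$ has many zero entries. A zero at position $(i,j)$ of $A$ with $i,j\le 2$ forces $a'_{ij}=0$; a zero in the third row or third column of $A$ forces a $1\times 2$ or $2\times 1$ block of $A'$ to vanish; and $a_{33}=0$ (which holds for $E_1,E_2,E_3,E_5$) forces the whole $2\times 2$ bottom-right block of $A'$ to vanish. The remaining non-zero merged entries split between two $A'$ cells each, and the admissibility identities from \lref{lem:easyprops4by4} pin those splits down. Concretely, the row-pair identity $a'_{31}+a'_{32}+a'_{33}+a'_{34}=a'_{41}+a'_{42}+a'_{43}+a'_{44}$ and its column analogue typically force a symmetric split such as $a'_{32}=a'_{42}=x$ in $E_1$, while the column-pair identity dictates the matching split of the corresponding entries in the first two rows.

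For each $E_i$ in turn I would then enumerate the finitely many possible $A'$ and identify them modulo the four equivalence operations (symbol swap in $F_1$, inducing $(12)(34)$ on rows; symbol swap in $F_2$, inducing $(12)(34)$ on columns; swap of $r_1$ with $r_2$, inducing $(12)$ simultaneously on rows and columns; and transposition). For $E_1$, $E_2$, $E_3$, $E_5$ this collapses all possibilities into a single equivalence class, represented by $D+B_i$ or $F+B_5$. For $E_4$ and $E_6$ the unit contribution to the bottom-right $2\times 2$ block of $A'$ can land in any of four cells, and these four options collapse under the equivalence group into exactly two classes, accounting for the two displayed forms of $B_4=B_6$. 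A direct summation then confirms that each claimed $D+B_i$ (or $F+B_i$) is itself admissible and condenses back to $E_i$, so no valid $A'$ is missed.

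The main obstacle is purely bookkeeping: six separate enumerations must be carried out, and care is needed because bare transpositions such as $(34)$ on rows or columns of $A'$ are \emph{not} among the permitted equivalences; they must instead be realised as compositions such as $g_1 g_2 g_3$, where $g_1,g_2,g_3$ denote the three non-transpose generators above. Once this accounting is handled, the proof reduces to inspection of the admissibility constraints in each case.
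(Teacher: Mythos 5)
Your proposal is correct and matches the paper's approach: the paper offers no explicit proof of this lemma, presenting it as a routine ``unpacking'' of \lref{lem:exceptions}, and your plan---lifting each $3\times 3$ exceptional configuration to its $4\times 4$ admissible forms via the identities of \lref{lem:easyprops4by4} and then collapsing the finitely many lifts under the equivalence group---is exactly that verification. Your caution about $(34)$ not being a bare generator is well placed; the operation actually needed is the simultaneous $(34)$ on rows and columns, which is realised by the composition you indicate and suffices in every case.
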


We say that a pair of rows $\{r,r'\}$ and the matrices $A(r,r')$ and
$A'(r,r')$ are of type $\alpha$, $\beta$ or $\gamma$ if, respectively,
$A(r,r')$ is equivalent to an element of $\{E_1,E_2\}$, $\{E_3,E_4\}$ or
$\{E_5,E_6\}$ from \lref{lem:exceptions}.  We say a row
$r$ is of type $\alpha$, $\beta$ or $\gamma$ if there is an $r'$ such
that $\{r,r'\}$ has type $\alpha$, $\beta$ or $\gamma$, respectively.
In the next three lemmas we further categorise rows and pairs of rows
by the number $\zz(r)$ of occurrences of $(0,0)$ in each row~$r$.
Each lemma is a consequence of 
\lref{lem:easyprops4by4} and \lref{lem:standexc}, 
by considering the exceptional configurations and 
equivalences.

\begin{lemma}\label{lem:num00alpha}
Let $\{r_1,r_2\}$ be a pair of rows of type $\alpha$.  Then the
multiset $\{\zz(r_1),\zz(r_2)\}$ is either $\{2x,2x+1\}$,
$\{2x+1,2x+1\}$, $\{x+1,x\}$ or $\{x,x\}$.  In the first two cases we
say that $\{r_1,r_2\}$ has type $\alpha_1$, and we say the pair $\{r_1,r_2\}$
has type $\alpha_2$ otherwise.
\end{lemma}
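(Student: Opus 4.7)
The plan is to compute $\{\zz(r_1), \zz(r_2)\}$ explicitly from the $4\times 4$ matrix $A' = A'(r_1,r_2)$ in the two canonical cases $A(r_1,r_2) = E_1$ and $A(r_1,r_2) = E_2$, and then use the equivalence operations on $A$ to generate the full list of multisets.

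First I would unfold $E_1$. The vanishing entries $a_{12}$, $a_{21}$, $a_{22}$, $a_{31}$, $a_{33}$ of $E_1$ force the corresponding entries of $A'$ (which are non-negative integers summing into those entries) to be zero, and applying the row/column balance relations in \lref{lem:easyprops4by4} to the bottom two rows of $A'$ yields $a'_{32} = a'_{42} = x$. The top two rows of $A'$ are then determined up to a single binary parameter $p = a'_{13} \in \{0,1\}$, with $a'_{14} = 1-p$, $a'_{23} = x+1-p$, $a'_{24} = x+p$. Substituting into the identities $\zz(r_1) = a'_{11} + a'_{13} + a'_{31} + a'_{33}$ and $\zz(r_2) = a'_{22} + a'_{23} + a'_{32} + a'_{33}$ from \lref{lem:easyprops4by4} gives $(\zz(r_1), \zz(r_2)) = (2x+p,\, 2x+1-p)$, so $\{\zz(r_1), \zz(r_2)\} = \{2x, 2x+1\}$ regardless of the choice of $p$. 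The same bookkeeping applied to $E_2$ determines $A'$ essentially uniquely and yields $\zz(r_1) = \zz(r_2) = 2x+1$.

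Next I would keep track of how the three equivalence operations on $A$ act on $\zz$. Swapping the first two rows of $A$ corresponds to interchanging the symbols $0$ and $1$ in $F_1$, which by \lref{ezy} replaces every $(0,0)$ cell in row $r$ by a $(1,0)$ cell, and hence sends $\zz(r) \mapsto m - \zz(r)$; swapping the first two columns of $A$ has the identical effect, through $F_2$. Transposition of $A$ corresponds to interchanging $F_1$ and $F_2$, which leaves $\zz(r)$ unchanged. Because the two symbol-swap operations agree on $\zz$ and compose to the identity, the orbit of a multiset $\{\zz(r_1), \zz(r_2)\}$ under equivalence consists of just itself and $\{m - \zz(r_1), m - \zz(r_2)\}$.

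Finally, the running assumption $n \equiv 0 \pmod 4$ from \eref{e:assumptions} together with the condition $n \equiv 2 \pmod 6$ required by $E_1$ and $E_2$ forces $n = 6x+2$ and hence $m = 3x+1$. Applying the map $t \mapsto m - t$ converts $\{2x, 2x+1\}$ into $\{x+1, x\}$ and $\{2x+1, 2x+1\}$ into $\{x, x\}$, producing exactly the four multisets claimed in the statement. The main obstacle is the careful enumeration of admissible refinements $A'$ of each of $E_1$ and $E_2$: one must verify that every non-negative integer solution of the balance relations yields the asserted multiset, and in particular that no refinement is missed in the $E_1$ case where there is a genuine binary degree of freedom.
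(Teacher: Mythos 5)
Your proposal is correct and follows essentially the same route as the paper, which dispenses with Lemmas~\ref{lem:num00alpha}--\ref{lem:num00gamma} in one line as consequences of \lref{lem:easyprops4by4} and \lref{lem:standexc} ``by considering the exceptional configurations and equivalences'' --- exactly the computation you carry out. The only cosmetic difference is that you re-derive the admissible $4\times4$ refinements of $E_1$ and $E_2$ from the row/column balance relations (correctly finding the one binary degree of freedom in the $E_1$ case) rather than quoting the explicit decompositions $E_i=D+B_i$ of \lref{lem:standexc}, and your bookkeeping of how the equivalence operations act on $\zz$ (identity or $t\mapsto m-t$, with $m=3x+1$) matches the paper's intended argument.
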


\begin{lemma}\label{lem:num00beta}
Let $\{r_1,r_2\}$ be a pair of rows of type $\beta$.  Then the
multiset $\{\zz(r_1),\zz(r_2)\}$ is one of $\{2x+1,2x+1\}$,
$\{2x+1,2x+2\}$, $\{x+1,x+1\}$ or $\{x+1,x\}$.  In the first two cases
we say the pair $\{r_1,r_2\}$ has type $\beta_1$ and we say the pair
$\{r_1,r_2\}$ has type $\beta_2$ otherwise.
\end{lemma}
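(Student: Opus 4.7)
The plan is to combine \lref{lem:standexc}, which lifts each exceptional $A$ to a standard $A'$ of the form $D+B_i$, with the formulas for $\zz(r_1)$ and $\zz(r_2)$ as sums of entries of $A'$ given by \lref{lem:easyprops4by4}. First, since $\{r_1,r_2\}$ is of type $\beta$, by definition $A(r_1,r_2)$ is equivalent to $E_3$ or $E_4$, and both arise only when $n\equiv4\pmod6$, so $n=6x+4$ and $m=n/2=3x+2$. Thus $A'(r_1,r_2)$ is equivalent, under the four symmetry operations listed before \lref{sumz}, to $D+B_3$ or to one of the two admissible choices of $D+B_4$.

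Next, I would directly evaluate $\zz(r_1)=a'_{11}+a'_{13}+a'_{31}+a'_{33}$ and $\zz(r_2)=a'_{22}+a'_{23}+a'_{32}+a'_{33}$ on each of these standard forms. A short calculation yields $\{\zz(r_1),\zz(r_2)\}=\{2x+1,2x+1\}$ both for $D+B_3$ and for one of the two $D+B_4$ liftings, while the remaining $D+B_4$ lifting produces $\{2x+1,2x+2\}$.

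Finally, I would track how each of the four equivalence operations acts on the pair $(\zz(r_1),\zz(r_2))$. Transposing $A'$ (equivalently, swapping $F_1$ with $F_2$) fixes $\zz$ since $(0,0)$ is symmetric; the $(12)$-action on both rows and columns of $A'$ (swapping $r_1$ with $r_2$) merely permutes the multiset; and each of the $(12)(34)$-actions on rows or columns of $A'$ (swapping symbols in $F_1$ or in $F_2$) replaces $\zz(r_i)$ by $m-\zz(r_i)$. Since $m-(2x+1)=x+1$ and $m-(2x+2)=x$, applying a single symbol-swap to the two base multisets produces $\{x+1,x+1\}$ and $\{x+1,x\}$, so $\{\zz(r_1),\zz(r_2)\}$ lies in the claimed list of four. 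The split into $\beta_1$ versus $\beta_2$ is then just the definition. I expect the main (minor) obstacle to be bookkeeping: because $E_4$ has two genuinely distinct liftings to $A'$, both must be checked in order to realise the $\{2x+1,2x+2\}$ case, and one must verify that no further multisets sneak in via compositions of the four symmetry operations.
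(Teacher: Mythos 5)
Your proposal is correct and follows exactly the route the paper intends: the paper gives no separate proof of this lemma, stating only that it is "a consequence of \lref{lem:easyprops4by4} and \lref{lem:standexc}, by considering the exceptional configurations and equivalences", which is precisely your computation of $\zz(r_1),\zz(r_2)$ on the liftings $D+B_3$ and the two versions of $D+B_4$ followed by tracking the action of the four equivalence operations. Your bookkeeping (base multisets $\{2x+1,2x+1\}$ and $\{2x+1,2x+2\}$, with a single symbol-swap sending $\zz$ to $m-\zz=3x+2-\zz$ and a double swap acting trivially) checks out and yields exactly the four listed multisets.
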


A row $r$ has type $\alpha_1$, $\alpha_2$, $\beta_1$ or $\beta_2$ if it is
in a pair $\{r,r'\}$ that is of type $\alpha_1$, $\alpha_2$, $\beta_1$
or $\beta_2$, respectively.  Note that we do not claim that every row
has exactly one type in the previous two lemmas; we show the contrary
in \lref{l:not2difftype} below.  For technical reasons, we do not
further classify pairs of rows of type $\gamma$ here, only individual
rows, as below.

\begin{lemma}\label{lem:num00gamma}
Let $\{r_1,r_2\}$ be a pair of rows of type $\gamma$.  Then the
multiset $\{\zz(r_1),\zz(r_2)\}$ is one of $\{2y,2y+1\}$,
$\{2y+1,2y+1\}$, $\{2y+1,2y+2\}$.  For $i\in\{1,2\}$, we say that row $r_i$
has type $\gamma_1$ if $\zz(r_i)=2y+1$ and type $\gamma_2$ otherwise.
\end{lemma}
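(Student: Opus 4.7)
The plan is to carry out the direct computation hinted at by the paragraph preceding the lemma: substitute the explicit canonical $4\times4$ forms for $E_5$ and $E_6$ supplied by \lref{lem:standexc} into the $\zz$-formulas of \lref{lem:easyprops4by4}, and then close the resulting set of multisets $\{\zz(r_1),\zz(r_2)\}$ under the equivalence operations on $A'$.

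First I would compute $\zz(r_1)$ and $\zz(r_2)$ on the canonical forms. By \lref{lem:standexc}, up to equivalence every $A'=A'(r_1,r_2)$ of type $\gamma$ is $F+B_5$ or $F+B_6$, and \lref{lem:standexc} records \emph{two} admissible choices for $B_6$. Applying the formulas $\zz(r_1)=a_{11}'+a_{13}'+a_{31}'+a_{33}'$ and $\zz(r_2)=a_{22}'+a_{23}'+a_{32}'+a_{33}'$ from \lref{lem:easyprops4by4} is in each case a one-line substitution. It gives $\{\zz(r_1),\zz(r_2)\}=\{2y+1,2y+1\}$ for $F+B_5$; the first choice of $B_6$ yields $\{2y+1,2y+2\}$; and the second choice of $B_6$ yields $\{2y+1,2y+1\}$.

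Next I would close these multisets under the remaining equivalence operations on $A'$. Transposition (which corresponds to swapping $F_1$ with $F_2$) fixes each $\zz(r_i)$ individually, since the pair $(0,0)$ is preserved, and the simultaneous $(12)$-swap of rows and columns of $A'$ (which swaps $r_1\leftrightarrow r_2$) merely interchanges the two entries of the multiset. The only operation that can move $\{\zz(r_1),\zz(r_2)\}$ is swapping the two symbols of $F_1$ (or of $F_2$); by \lref{ezy} this replaces $\zz(r_i)$ by $m-\zz(r_i)$ simultaneously for $i=1,2$. Since a type-$\gamma$ pair forces $n\equiv4\pmod8$, we have $m=n/2=4y+2$, so $m-(2y+1)=2y+1$, $m-(2y+2)=2y$ and $m-2y=2y+2$; hence this operation fixes $\{2y+1,2y+1\}$ and exchanges $\{2y+1,2y+2\}$ with $\{2y+1,2y\}$.

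Combining these cases yields exactly the three multisets $\{2y,2y+1\}$, $\{2y+1,2y+1\}$, $\{2y+1,2y+2\}$ asserted by the lemma, after which the labels $\gamma_1$ (for $\zz(r_i)=2y+1$) and $\gamma_2$ (otherwise) are merely names. The main point to be careful about is not to omit either of the two inequivalent choices of $B_6$ from \lref{lem:standexc}: one of them (together with closing under the symbol-swap) produces the multisets $\{2y+1,2y+2\}$ and $\{2y+1,2y\}$, while the other is what delivers $\{2y+1,2y+1\}$ from the $E_6$ case. Beyond this bookkeeping there is no creative step; the proof is a mechanical substitution exactly parallel to the proofs of \lref{lem:num00alpha} and \lref{lem:num00beta}.
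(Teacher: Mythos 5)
Your proposal is correct and follows essentially the same route the paper intends: the paper dispatches Lemmas~\ref{lem:num00alpha}--\ref{lem:num00gamma} with the single remark that each ``is a consequence of Lemma~\ref{lem:easyprops4by4} and Lemma~\ref{lem:standexc}, by considering the exceptional configurations and equivalences,'' and your substitution of $F+B_5$ and both choices of $F+B_6$ into the $\zz$-formulas, followed by closure under the equivalence operations (correctly noting that only a symbol swap, sending $\zz(r_i)\mapsto m-\zz(r_i)$ with $m=4y+2$, can change the multiset), is exactly that computation carried out in full.
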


As mentioned above, it is possible for a row to have more than one of
the types $\alpha_i$, $\beta_i$ or $\gamma_i$.  We now show, however, that
in most instances no such row exists.

\begin{lemma}\label{l:not2difftype}
  A row $r$ is of two different types from the set
  $\{\alpha_1,\alpha_2,\beta_1,\beta_2,\gamma_1,\gamma_2\}$ only if
  \begin{itemize}
  \item[(i)] $n=8$, $\zz(r)=2$ and $r$ has type $\alpha_1$ and $\alpha_2$, or
  \item[(ii)] $n=20$, $\zz(r)\in\{4,6\}$ and $r$ has type $\gamma_2$ and either
    $\alpha_1$ or $\alpha_2$.
  \end{itemize}
\end{lemma}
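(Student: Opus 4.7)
The plan is to check all $\binom{6}{2}=15$ pairs of types and determine exactly which combinations are compatible for a single row~$r$. Each of the six types imposes a congruence condition on $n$, inherited from the corresponding exceptional matrices of \lref{lem:exceptions}, as well as restricting $\zz(r)$ to a small set of integers depending on $x$ or $y$. Specifically, type $\alpha$ needs $n\equiv 2\pmod 6$, type $\beta$ needs $n\equiv 4\pmod 6$, and type $\gamma$ needs $n\equiv 4\pmod 8$. The permitted values of $\zz(r)$, read off from Lemmas~\ref{lem:num00alpha}--\ref{lem:num00gamma}, are $\{2x,2x+1\}$ for $\alpha_1$, $\{x,x+1\}$ for $\alpha_2$, $\{2x+1,2x+2\}$ for $\beta_1$, $\{x,x+1\}$ for $\beta_2$, $\{2y+1\}$ for $\gamma_1$, and $\{2y,2y+2\}$ for $\gamma_2$.

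The first reduction is purely by these congruences. Since $n\equiv 2\pmod 6$ and $n\equiv 4\pmod 6$ are incompatible, no $\alpha$-type can coexist with any $\beta$-type. Since $2y+1$ is odd and $2y,2y+2$ are even, $\gamma_1$ and $\gamma_2$ cannot coexist. The surviving pairings are the same-letter cases $\{\alpha_1,\alpha_2\}$ and $\{\beta_1,\beta_2\}$; the four $\alpha$-$\gamma$ pairings (forcing $n\equiv 20\pmod{24}$, hence $n\in\{20,44,68,\dots\}$); and the four $\beta$-$\gamma$ pairings (forcing $n\equiv 4\pmod{24}$, hence $n\in\{28,52,\dots\}$ under \eref{e:assumptions}). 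For each surviving pair I would compute the intersection of the two permitted $\zz$-sets as a function of $n$.

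For $\{\alpha_1,\alpha_2\}$, $\{2x,2x+1\}\cap\{x,x+1\}\ne\emptyset$ forces $2x\le x+1$, so $x\le 1$; combined with $n\equiv 2\pmod 6$ and \eref{e:assumptions} this gives $n=8$, $x=1$ and $\zz(r)=2$, which is case~(i). For $\{\beta_1,\beta_2\}$, the analogous bound forces $x\le 0$, which contradicts \eref{e:assumptions}. For the $\beta$-$\gamma$ cases, parametrise $n=28+24k$ with $k\ge 0$, so that $x=4+4k$ and $y=3+3k$; a direct check confirms that each of the four intersections $\beta_i\cap\gamma_j$ is empty for every $k\ge 0$ because the $\beta$-values of $\zz(r)$ lie strictly above the $\gamma$-values. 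For the $\alpha$-$\gamma$ cases, parametrise $n=20+24k$ so that $x=3+4k$ and $y=2+3k$; for $k\ge 1$ one sees $2x>2y+2$ and $x+1<2y$, so all four intersections are empty, while for $k=0$ (that is $n=20$) one computes $\alpha_1\cap\gamma_2=\{6,7\}\cap\{4,6\}=\{6\}$ and $\alpha_2\cap\gamma_2=\{3,4\}\cap\{4,6\}=\{4\}$, with the $\gamma_1$ intersections $\{6,7\}\cap\{5\}$ and $\{3,4\}\cap\{5\}$ both empty. This is exactly case~(ii).

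The only real obstacle is bookkeeping: keeping the three congruences and six $\zz$-sets aligned, and monotonicity-checking that once $n$ exceeds the small exceptional values, the $\alpha$/$\beta$-ranges (scaling like $n/3$) separate cleanly from the $\gamma$-ranges (scaling like $n/4$). There is no deeper combinatorial content; once the data from \lref{lem:exceptions} and Lemmas~\ref{lem:num00alpha}--\ref{lem:num00gamma} are tabulated, the conclusion follows by direct arithmetic comparison.
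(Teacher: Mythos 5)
Your proposal is correct and follows essentially the same route as the paper: both arguments combine the congruence conditions on $n$ inherited from Lemma~\ref{lem:exceptions} with the admissible $\zz$-values from Lemmas~\ref{lem:num00alpha}--\ref{lem:num00gamma}, the only cosmetic difference being that you parametrise $n$ modulo $24$ where the paper works directly with inequalities such as $2y\le x+1$ and $2y+2\ge 2x$. One small slip worth noting: in the $\beta$--$\gamma$ case the $\beta_2$-values $\{x,x+1\}$ lie strictly \emph{below} (not above) the $\gamma$-range while the $\beta_1$-values lie strictly above it, but your claimed direct check is nonetheless valid and all four intersections are indeed empty.
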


\begin{proof}
Note that by definition, $r$ can never be of type $\gamma_1$ and
$\gamma_2$.  By Lemmas~\ref{lem:num00alpha} and~\ref{lem:num00beta},
$r$ has type $\alpha_1$ and $\alpha_2$ or $\beta_1$ and $\beta_2$ only
if $2x \leq x+1$ or $2x +1 \leq x+1$, respectively. As $x \geq 1$, $r$
is never type $\beta_1$ and $\beta_2$ and has type $\alpha_1$ and
$\alpha_2$ only if $x=1$ and $\zz(r)=2$.  So
we can assume that $r$ has two of the three types $\alpha$, $\beta$ and
$\gamma$.  The row $r$ can never be of type $\alpha$ and $\beta$, since
any type $\alpha$ row can only occur when $n\equiv 2 \pmod 6$ while
any row of type $\beta$ only occurs when $n \equiv 4 \pmod 6$.  By
Lemmas~\ref{lem:exceptions},~\ref{lem:num00beta} 
and~\ref{lem:num00gamma}, $r$ has type $\beta$ and
$\gamma$ only if $n \equiv 4 \pmod{24}$ and at least one of
$2y\le x+1$ or $2y+2\ge2x+1$ holds. However, these inequalities imply
$n/4-1\leq (n+2)/6$ and $n/4+1 \geq (n-1)/3$, respectively, 
both contradicting the fact that $4<n \equiv 4 \pmod{24}$.  Finally,
by similar reasoning to the previous case, $r$ has type $\alpha$ and
$\gamma$ only if $n \equiv 20 \pmod{24}$ and at least one of $2y\le x+1$
or $2y+2\ge2x$ holds.  However, this implies $n/4-1 \leq (n+4)/6$
or $n/4+1 \geq (n-2)/3$; in both cases $n\le20$.  It follows that $r$
has type $\alpha$ and $\gamma$ only if $n=20$, $\zz(r)\in\{4,6\}$ and
$r$ has type $\gamma_2$ and one of $\alpha_1$ or $\alpha_{2}$.
\end{proof}

Observe the following property of types $\alpha$ and $\beta$ (the same does not hold for type $\gamma$).

\begin{lemma}\label{lem:one2two}
If $\{r_1,r_2\}$ is a pair of rows of type $\alpha_i$ (respectively $\beta_i$),
where $i\in \{1,2\}$, then after swapping $0$ and $1$ in exactly one of
the frequency squares $F_1$ and $F_2$, the pair $\{r_1,r_2\}$ is
of type $\alpha_{3-i}$ (respectively, $\beta_{3-i}$).
\end{lemma}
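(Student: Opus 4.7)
The strategy is to deduce the claim from two separate effects of swapping the symbols in exactly one of $F_1$ or $F_2$: (i) the equivalence class of $A(r_1,r_2)$ is preserved, hence the type in $\{\alpha,\beta,\gamma\}$ is preserved; and (ii) each value $\zz(r_i)$ is replaced by $n/2-\zz(r_i)$. Granted these, a short arithmetic check using \lref{lem:num00alpha} and \lref{lem:num00beta} identifies the new subtype.

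Effect (i) follows from the paragraph preceding the definition of $A$, which lists ``swapping the symbols in $F_1$'' (respectively $F_2$) among the operations yielding an equivalent admissible matrix $A'$. After collapsing $A'$ to the $3\times 3$ matrix $A$, the action $(12)(34)$ on rows (respectively columns) of $A'$ reduces to swapping the first two rows (respectively the first two columns) of $A$, both of which are listed equivalence operations on $A$. Hence if $A(r_1,r_2)$ was equivalent to one of $E_1,\dots,E_6$ before the swap, it remains equivalent to the same one afterwards, so the type $\alpha$, $\beta$, or $\gamma$ is retained.

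Effect (ii) is a direct consequence of \lref{ezy}: swapping $0\leftrightarrow 1$ in $F_1$ exchanges $(0,0)\leftrightarrow(1,0)$ and $(0,1)\leftrightarrow(1,1)$ in every cell, so the new count of $(0,0)$ cells in row $r$ equals the old count of $(1,0)$ cells, which equals $m-\zz(r)=n/2-\zz(r)$. The analogous statement for $F_2$ uses the exchange $(0,0)\leftrightarrow(0,1)$.

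Combining (i) and (ii), assume $\{r_1,r_2\}$ has type $\alpha_i$. Then $n\equiv 2\pmod 6$, so $n/2=3x+1$, and the involution $z\mapsto 3x+1-z$ swaps $\{2x,2x+1\}\leftrightarrow\{x+1,x\}$ and $\{2x+1,2x+1\}\leftrightarrow\{x,x\}$. By \lref{lem:num00alpha} these are precisely the multisets characterising $\alpha_1$ and $\alpha_2$, so the subtype flips as claimed. For a type $\beta_i$ pair we have $n\equiv 4\pmod 6$ and $n/2=3x+2$; the involution $z\mapsto 3x+2-z$ swaps $\{2x+1,2x+1\}\leftrightarrow\{x+1,x+1\}$ and $\{2x+1,2x+2\}\leftrightarrow\{x+1,x\}$, which by \lref{lem:num00beta} exchanges $\beta_1$ and $\beta_2$. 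No serious obstacle is anticipated: the only care needed is the bookkeeping of which multiset corresponds to which subtype for each of the two residue classes $n\bmod 6$.
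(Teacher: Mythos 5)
Your proof is correct and follows the intended route: the paper states this lemma as a bare observation (no proof is given), and your two observations --- that swapping symbols in one square applies $(12)(34)$ to the rows or columns of $A'$, hence preserves the equivalence class of $A$ and so the type $\alpha$/$\beta$, while replacing each $\zz(r_i)$ by $n/2-\zz(r_i)$ --- together with the arithmetic check against Lemmas~\ref{lem:num00alpha} and~\ref{lem:num00beta} are exactly the justification the authors leave implicit. The bookkeeping ($n/2=3x+1$ in the $\alpha$ case, $n/2=3x+2$ in the $\beta$ case) is carried out correctly.
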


For a matrix $A'=A'(r,r')$, its \emph{dual} is the equivalent matrix formed from
$A'$, by applying the permutation $(1 2)(3 4)$ to its rows and
columns. The following is then a corollary of \lref{lem:standexc}.

\begin{corollary}\label{cor:isstandordualis}
Let $A'=A'(r,r')$ have type $\alpha_1$. Then either $A'$ or its dual is one of the following (up to transpose): 
$$\begin{array}{|c|c|c|c|}
\hline
2x & 0  & 1 & 0 \\
\hline
  0  & 0  & x & x+1  \\
\hline
  0 & x & 0 & 0 \\
\hline
0 & x & 0 & 0 \\
\hline
\end{array}
\quad \text{or} \quad 
\begin{array}{|c|c|c|c|}
\hline
2x & 0  & 0 & 1 \\
\hline
  0  & 0  & x+1 & x  \\
\hline
  0 & x & 0 & 0 \\
\hline
0 & x & 0 & 0 \\
\hline
\end{array}
\quad \text{or} \quad 
\begin{array}{|c|c|c|c|}
\hline
2x+1 & 0  & 0 & 0 \\
\hline
  0  & 1  & x & x  \\
\hline
  0 & x & 0 & 0 \\
\hline
0 & x & 0 & 0 \\
\hline
\end{array}\;.$$
Let $A'=A'(r,r')$ have type $\beta_1$. Then either $A'$ or its dual
is one of the following (up to transpose): 
$$\begin{array}{|c|c|c|c|}
\hline
2x+1 & 0  & 0 & 0 \\
\hline
  1  & 0  & x & x  \\
\hline
  0 & x+1 & 0 & 0 \\
\hline
0 & x+1 & 0 & 0 \\
\hline
\end{array}
\quad \text{or} \quad 
\begin{array}{|c|c|c|c|}
\hline
2x+1 & 0  & 0 & 0 \\
\hline
  0  & 0  & x+1 & x  \\
\hline
  0 & x+1 & 0 & 0 \\
\hline
0 & x & 0 & 1 \\
\hline
\end{array}
\quad \text{or}
$$ $$
\begin{array}{|c|c|c|c|}
\hline
2x+1 & 0  & 0 & 0 \\
\hline
  0  & 0  & x & x+1  \\
\hline
  0 & x & 1 & 0 \\
\hline
0 & x+1 & 0 & 0 \\
\hline
\end{array}
\quad \text{or} \quad 
\begin{array}{|c|c|c|c|}
\hline
2x+1 & 0  & 0 & 0 \\
\hline
  0  & 0  & x & x+1  \\
\hline
  0 & x+1 & 0 & 0 \\
\hline
0 & x & 1 & 0 \\
\hline
\end{array}\;.$$
\end{corollary}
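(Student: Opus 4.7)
The plan is to prove the corollary by direct case analysis. By \lref{lem:exceptions} together with the definitions of $\alpha_1$ and $\beta_1$ in \lref{lem:num00alpha} and \lref{lem:num00beta}, if $A'=A'(r,r')$ has type $\alpha_1$ then its condensed form $A=A(r,r')$ is equivalent to either $E_1$ or $E_2$, and if it has type $\beta_1$ then $A$ is equivalent to $E_3$ or $E_4$. First I would use the equivalence operations (which include dual and transpose as well as the lifts of the $3\times 3$ equivalences to the $4\times 4$ setting) to assume without loss of generality that $A$ is literally one of $E_1, E_2$ (for $\alpha_1$) or $E_3, E_4$ (for $\beta_1$).

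Having fixed the condensed form, the heart of the proof is to reconstruct every admissible $A'$ whose condensation equals $A$. The key ingredient is \lref{lem:easyprops4by4}: the row-sum equality $a'_{31}+a'_{32}+a'_{33}+a'_{34}=a'_{41}+a'_{42}+a'_{43}+a'_{44}$ and the column-sum equality $a'_{13}+a'_{23}+a'_{33}+a'_{43}=a'_{14}+a'_{24}+a'_{34}+a'_{44}$, together with non-negativity and the prescribed entries of $A$, sharply restrict how the merged third row and third column of $A$ split back into two rows and two columns of $A'$, and also how the bottom-right $2\times 2$ block of $A'$ is filled so that it sums to $a_{33}$.

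For $A=E_1$, the condition $a_{33}=0$ forces the bottom-right $2\times 2$ block of $A'$ to vanish, and the sum constraints leave exactly two choices for how to distribute $a_{13}=1$ and $a_{23}=2x+1$ across columns 3 and 4; these are precisely the first two matrices listed for $\alpha_1$. For $A=E_2$ the vanishing of $a_{13}$ eliminates further freedom and the unique $A'$ is the third listed matrix. For $\beta_1$, the case $A=E_3$ uniquely yields the first listed matrix, while $A=E_4$ (where $a_{33}=1$) allows the single nonzero entry of the bottom-right $2\times 2$ block to be placed in any of its four positions; each choice propagates through the row- and column-sum equalities to exactly one admissible $A'$, and these are the remaining three listed matrices.

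The main obstacle is careful bookkeeping. One must confirm that the equivalence operations used to normalise $A$ to $E_i$ correspond on $A'$ to operations within the equivalence class generated by dual and transpose (together with swapping $r_1$ and $r_2$, which is implicit because the pair is unordered), so that no $A'$-level equivalence class is missed and no listed matrix is redundant. This requires tracking how each $3\times 3$ equivalence (swap of rows 1,2, swap of columns 1,2, simultaneous swap, and transpose) lifts to the $4\times 4$ level, and verifying that the two or four candidates obtained from $E_1$ and $E_4$ really are distinct representatives under the stated equivalences, so that the final list of representatives is exactly as claimed.
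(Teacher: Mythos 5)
Your proposal is correct and follows essentially the same route as the paper: the paper deduces this corollary from \lref{lem:standexc}, which packages exactly the lifting computation you carry out directly from \lref{lem:exceptions} and \lref{lem:easyprops4by4}. The two bookkeeping points you flag are genuine but resolve as you anticipate: every admissible lift of the literal $E_1,\dots,E_4$ has type $\alpha_1$ (resp.\ $\beta_1$), so the normalisation of $A$ needs only the dual, the transpose and the row-pair swap, and the four $E_4$-lifts collapse to the three listed matrices because the $a'_{34}=1$ and $a'_{43}=1$ cases are transposes of each other.
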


\subsection{Sets of rows that pairwise do not balance}\label{subsec:setsofrows}

We next give upper bounds on the number of rows in $F_1 \oplus F_2$
that are pairwise not balanceable and of the same type. We begin with
rows of type $\alpha$.

\begin{lemma}\label{lem:4alpharows}
If $n > 8$, then any set of four rows of type $\alpha$ in
$F_1 \oplus F_2$ contains a balanceable pair.
\end{lemma}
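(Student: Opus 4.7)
The plan is to assume for contradiction that all $\binom{4}{2}=6$ pairs among our four type-$\alpha$ rows $r_1,\ldots,r_4$ are non-balanceable, and derive a contradiction. Because type-$\alpha$ pairs exist only when $n\equiv 2\pmod 6$, the hypothesis $n>8$ gives $x\geq 2$.

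First I would prove that every non-balanceable pair among the four rows is itself of type $\alpha$. By \lref{l:not2difftype}, a type-$\alpha$ row is never of type $\beta$, and can be of type $\gamma$ only when $n=20$; but in that single case a type-$\alpha$ row has $\zz$-value in $\{x,x+1,2x,2x+1\}=\{3,4,6,7\}$, which avoids the value $2y+1=5$ that must appear in every type-$\gamma$ multiset by \lref{lem:num00gamma}. So no type-$\gamma$ pair can arise among our four rows. \lref{lem:num00alpha} then says each pair's $\zz$-multiset lies in $\{\{2x,2x+1\},\{2x+1,2x+1\},\{x+1,x\},\{x,x\}\}$. Since $\{x,x+1\}$ and $\{2x,2x+1\}$ are disjoint for $x\geq 2$, all four rows have $\zz$-values in a single one of these ranges and every pair has the same subtype. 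Applying \lref{lem:one2two} via a global swap of $0$ and $1$ in $F_1$ (which preserves balanceability), I may assume WLOG that every pair has type $\alpha_1$, and each $\zz(r_i)\in\{2x,2x+1\}$. Since the multiset $\{2x,2x\}$ is absent from the $\alpha_1$ list, at most one row has $\zz(r_i)=2x$, giving $Z:=\sum_{i=1}^4 \zz(r_i)\geq 8x+3$.

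The contradiction will come from a double-count. By \Cref{goodcases}, each non-balanceable pair satisfies $a_{33}(r_i,r_j)\leq 1$, so $\sum_{i<j}a_{33}(r_i,r_j)\leq 6$. Unpacking the definition, $a_{33}(r_i,r_j)$ counts columns in which $r_i$ and $r_j$ display the same ordered pair in $F_1\oplus F_2$. Letting $T_v^c$ denote the number of our four rows taking value $v\in\{v_1,v_2,v_3,v_4\}$ at column $c$, one obtains the identity
\[
\sum_{i<j}a_{33}(r_i,r_j)\;=\;\sum_{v}\sum_c \binom{T_v^c}{2}.
\]

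To bound the right-hand side below, \lref{ezy} gives $\sum_c T_{v_3}^c=\sum_c T_{v_4}^c=Z$, since each row contributes $\zz(r_i)$ copies of $(0,0)$ and of $(1,1)$. The elementary convexity inequality $\sum_c\binom{T^c}{2}\geq\max(0,Z-n)$ for $n$ non-negative integers bounded by $4$ and summing to $Z$ then forces each of $\sum_c\binom{T_{v_3}^c}{2}$ and $\sum_c\binom{T_{v_4}^c}{2}$ to be at least $Z-n\geq 2x+1$. Combining,
\[
4x+2\;\leq\;\sum_{v}\sum_c \binom{T_v^c}{2}\;\leq\;6,
\]
which yields $x\leq 1$ and contradicts $x\geq 2$. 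The main obstacle is not the convexity step, which is routine, but rather the preparatory type analysis: one must confirm that every non-balanceable pair really carries an $\alpha_1$ multiset, and the ambiguous case $n=20$ (where \lref{l:not2difftype} only weakly pins down a row's type) is precisely what the $\zz$-value disjointness check above is needed to resolve.
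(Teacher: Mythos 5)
Your route is genuinely different from the paper's. The paper fixes one row $r$, uses \lref{lem:one2two}, \lref{lem:num00alpha} and \Cref{cor:isstandordualis} to pin each $A'(r,r_i)$ down to a handful of explicit $\alpha_1$ matrices, deduces that at most one column of $C_{00}$ (the columns where $r$ holds $(0,0)$) fails to carry $(1,1)$ in row $r_i$, and so finds $2x-2\ge 2$ columns in which $r_1$ and $r_2$ both hold $(1,1)$; \Cref{goodcases} then finishes. You instead run a global double count over all six pairs, using only the bound $a_{33}\le1$ per non-balanceable pair together with the $\zz$-statistics of \lref{lem:num00alpha}. Your preparatory reductions are all sound: the exclusion of types $\beta$ and $\gamma$ (including the delicate $n=20$ case, handled correctly by noting that no type-$\alpha$ row can take the value $2y+1=5$ that every type-$\gamma$ multiset must contain), the forcing of a common subtype from the disjointness of $\{x,x+1\}$ and $\{2x,2x+1\}$ when $x\ge2$, the reduction to $\alpha_1$ via \lref{lem:one2two}, and the bound $Z\ge 8x+3$.

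There is, however, one concrete error in the counting step. You assert that $a_{33}(r_i,r_j)$ counts columns in which the two rows display the same ordered pair; it does not. By definition $a_{33}=a'_{33}+a'_{34}+a'_{43}+a'_{44}$ counts columns in which \emph{both} rows carry an entry from $\{(0,0),(1,1)\}$, irrespective of whether those entries agree, and it ignores coincidences in $(0,1)$ or $(1,0)$, which are counted by $a_{11}$ and $a_{22}$ and can be as large as $2x+1$ in $E_1$ and $E_2$. Consequently the identity $\sum_{i<j}a_{33}(r_i,r_j)=\sum_{v}\sum_c\binom{T^c_v}{2}$ is false, and the asserted upper bound of $6$ on the full right-hand sum fails (the $v_1$ and $v_2$ terms alone can exceed it). The repair is immediate and does not disturb your strategy: writing $D^c=T^c_{v_3}+T^c_{v_4}$, one has
$6\ge\sum_{i<j}a_{33}(r_i,r_j)=\sum_c\binom{D^c}{2}\ge\sum_c\binom{T^c_{v_3}}{2}+\sum_c\binom{T^c_{v_4}}{2}\ge 2(Z-n)\ge 4x+2$,
giving the same contradiction $x\le1$. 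With that correction your proof is valid, and it is arguably cleaner than the paper's since it bypasses \Cref{cor:isstandordualis} entirely.
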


\begin{proof}
Let $r,r_1,r_2$ and $r_3$ be $4$ rows of type $\alpha$ in $F_1 \oplus F_2$.
We may assume that $\{r,r_i\}$ is not balanceable for
$i\in\{1,2,3\}$, since otherwise we are done.  It follows from
Lemmas~\ref{lem:num00gamma} and~\ref{l:not2difftype} that $\{r,r_i\}$
is of type $\alpha$ for $i\in\{1,2,3\}$, since, even if $n=20$, there
can be at most one row in a non-balanceable pair which is of two
different types.  By \lref{lem:one2two}, we can assume that
$A'(r,r_1)$ is of type $\alpha_1$.  \lref{lem:num00alpha} implies that
$A'(r,r_2)$ and $A'(r,r_3)$ both have type $\alpha_1$.  Let $C_{00}$
be the set of columns of $F_1 \oplus F_2$ for which row $r$ contains a
$(0,0)$.  By \Cref{cor:isstandordualis}, by taking the dual if necessary,
we can assume that $A'(r,r_1)$ and $A'(r,r_2)$ are each one of
\[
\begin{array}{|c|c|c|c|}
\hline
2x & 0  & 0 & 1 \\
\hline
  0  & 0  & x+1 & x  \\
\hline
 0 & x & 0 & 0 \\
\hline
0 & x & 0 & 0 \\
\hline
\end{array}
\quad \text{and} \quad 
\begin{array}{|c|c|c|c|}
\hline
2x & 0  & 0 & 0 \\
\hline
  0  & 0  & x & x  \\
\hline
 0 & x+1 & 0 & 0 \\
\hline
1 & x & 0 & 0 \\
\hline
\end{array}.
\]
when $\zz(r)=2x$ and $A'(r,r_1)$ and $A'(r,r_2)$ are each one of 
\[
\begin{array}{|c|c|c|c|}
\hline
2x+1 & 0  & 0 & 0 \\
\hline
  0  & 1  & x & x  \\
\hline
 0 & x & 0 & 0 \\
\hline
0 & x & 0 & 0 \\
\hline
\end{array}
\quad \text{and} \quad 
\begin{array}{|c|c|c|c|}
\hline
2x & 0  & 1 & 0 \\
\hline
  0  & 0  & x & x+1  \\
\hline
 0 & x & 0 & 0 \\
\hline
0 & x & 0 & 0 \\
\hline
\end{array}
\quad \text{and} \quad 
\begin{array}{|c|c|c|c|}
\hline
2x & 0  & 0 & 0 \\
\hline
  0  & 0  & x & x  \\
\hline
 1 & x & 0 & 0 \\
\hline
0 & x+1 & 0 & 0 \\
\hline
\end{array} 
\]
when $\zz(r)=2x+1$.  In particular, there is
at most one column in $C_{00}$ such that row $r_i$ of $F_1 \oplus F_2$ does
not contain $(1,1)$, for each $i\in\{1,2\}$.  Thus, there are at least $2x-2$
columns in $C_{00}$ such that both rows $r_1$ and $r_2$ of $F_1 \oplus
F_2$ contain a $(1,1)$.  As $x \geq 2$, the result follows from
\Cref{goodcases}.
\end{proof}

The above lemma is also true when $n=8$, using a slightly more
complicated argument, but this will not be necessary.  We show an
analogous result for rows of type $\beta$.

\begin{lemma}\label{lem:4betarows}
Any set of four rows of type $\beta$ in $F_1 \oplus F_2$ contains
a balanceable pair.
\end{lemma}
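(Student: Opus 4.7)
The plan is to mimic the proof of \lref{lem:4alpharows} but handle carefully the duality in \Cref{cor:isstandordualis}. Fix four rows $r, r_1, r_2, r_3$ of type $\beta$ and suppose, for a contradiction, that none of the three pairs $\{r, r_i\}$ is balanceable.

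First I would show that each pair $\{r, r_i\}$ is itself of type $\beta$. Since $r$ has type $\beta$, \lref{l:not2difftype} (whose two exceptional cases involve only the types $\alpha$ and $\gamma$) forbids $r$ from also having type $\alpha$ or $\gamma$, so the only unbalanceable configurations in \lref{lem:exceptions} compatible with $r$ are $E_3$ and $E_4$. Swapping $0$ and $1$ in exactly one of $F_1, F_2$ if necessary (\lref{lem:one2two}), I may then assume each pair is of type $\beta_1$, so $\zz(r) \in \{2x+1, 2x+2\}$ by \lref{lem:num00beta}.

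The central observation, to be verified by inspecting the four $4\times 4$ matrices listed in \Cref{cor:isstandordualis} together with their transposes, is: whenever $A'(r, r_i)$ is (up to transpose) one of these listed forms rather than a dual, row $r_i$ contains $(1,1)$ in at least $\zz(r)-1$ of the $\zz(r)$ columns in which row $r$ contains $(0,0)$. Indeed, in each listed form $a'_{11} = 2x+1$ gives $2x+1$ such columns in which $r_i$ is $(1,1)$, and only the form with $\zz(r) = 2x+2$ has a nonzero $a'_{33}$ (equal to $1$), yielding a single $(0,0)$ exception. By contrast, the dual forms place $(1,0)$ or $(0,1)$ in $r_i$ on these columns, so it is essential not to be forced into a dual. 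This bookkeeping over the four forms is the only routine-calculation-heavy step of the argument.

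The crucial global move is to apply, if necessary, a single dualization, corresponding physically to simultaneously swapping $0$ and $1$ in both $F_1$ and $F_2$. This symmetry preserves the binary frequency square property, preserves balanceability, preserves $\zz$ on every row, and sends each $A'(r, r_i)$ simultaneously to its dual. Choosing the orientation that maximises the count, at least two of $A'(r, r_1), A'(r, r_2), A'(r, r_3)$ equal a listed form (up to transpose); after relabelling, assume this holds for $i \in \{1,2\}$. Inclusion-exclusion inside the $\zz(r)$ columns where row $r$ has $(0,0)$ then produces at least $2(\zz(r) - 1) - \zz(r) = \zz(r) - 2 \ge 2x - 1$ columns in which both $r_1$ and $r_2$ contain $(1,1)$. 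Since $n \equiv 4 \pmod{12}$ with $n > 4$ forces $x \ge 2$, this is at least $3$, and any two such columns make $\{r_1, r_2\}$ balanceable by \Cref{goodcases}, contradicting the standing assumption. The chief subtlety is that dualization must be applied globally rather than per pair; pigeonhole on the three pairs neutralises this and is what makes the approach of \lref{lem:4alpharows} adapt cleanly.
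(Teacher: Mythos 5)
Your proof is correct and follows essentially the same route as the paper's: normalise all three pairs to type $\beta_1$, use \Cref{cor:isstandordualis} to see that row $r_i$ carries $(1,1)$ in all but at most one column of $C_{00}$, and finish with inclusion--exclusion and \Cref{goodcases}. The one place you are more careful than the paper is the explicit pigeonhole guaranteeing that two of the three matrices $A'(r,r_1),A'(r,r_2),A'(r,r_3)$ lie on the same side of the listed/dual dichotomy before the single global dualization is applied; the paper simply asserts that both of $A'(r,r_1)$ and $A'(r,r_2)$ can be taken in listed form, which a per-pair dualization cannot achieve since it acts on all pairs simultaneously.
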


\begin{proof}
Let $r,r_1,r_2$ and $r_3$ be $4$ rows of type $\beta$ in $F_1 \oplus F_2$.
We may assume that $\{r,r_i\}$ is not balanceable for $i\in\{1,2,3\}$,
since otherwise we are done. By \lref{l:not2difftype}, it follows that 
$\{r,r_i\}$ is of type $\beta$ for $i\in\{1,2,3\}$. 
By \lref{lem:one2two},  we can assume that $A'(r,r_1)$ is of type
$\beta_1$.  \lref{lem:num00beta} and \lref{l:not2difftype}
then imply that $A'(r,r_2)$ and $A'(r,r_3)$ are also of type $\beta_1$. 
Let $C_{00}$ be the set of columns with $(0,0)$ in row $r$ of
$F_1 \oplus F_2$.  By 
\Cref{cor:isstandordualis}, by taking the dual if necessary,
we can assume that $A'(r,r_1)$ and $A'(r,r_2)$ have $2x+1$
in cell $(1,1)$ and $\zz(r)\in\{2x+1,2x+2\}$.
Thus for each $i\in\{1,2\}$, there are at least $2x+1$ columns of
$C_{00}$ for which $r_i$ contains a $(1,1)$ and at most one column of
$C_{00}$ for which $r_i$ does not contain $(1,1)$.  So there are at
least $2x\ge2$ columns of $C_{00}$ that contain a $(1,1)$ in both row
$r_1$ and $r_2$. It follows from \Cref{goodcases} that $\{r_1,r_2\}$
is balanceable.
\end{proof}

We now consider rows of type $\gamma$.
We start by categorising configurations equivalent to $E_5$, as given below.
\[
\begin{array}{|c|c|c|}
\hline
 2y+1  & 0 &  0  \\
\hline
 2y+1  & 0  & 0  \\
\hline
 0 & 4y+2 & 0 \\
\hline
 \multicolumn{3}{c}{}  \\[-13pt]
 \multicolumn{3}{c}{T_1}   
\end{array}
\quad \quad
\begin{array}{|c|c|c|}
\hline
0  &  2y+1 &  0  \\
\hline
 0  &  2y+1  & 0  \\
\hline
 4y+2 & 0 & 0 \\
\hline
 \multicolumn{3}{c}{}  \\[-13pt]
 \multicolumn{3}{c}{T_2}   
\end{array}
\quad \quad
\begin{array}{|c|c|c|}
\hline
 2y+1  & 2y+1 &  0  \\
\hline
 0  & 0  & 4y+2  \\
\hline
 0 & 0 & 0 \\
\hline
 \multicolumn{3}{c}{}  \\[-13pt]
 \multicolumn{3}{c}{T_3}  
\end{array}
\quad \quad
\begin{array}{|c|c|c|}
\hline
 0  & 0 &  4y+2  \\
\hline
 2y+1  & 2y+1  & 0  \\
\hline
 0 & 0 & 0 \\
\hline
 \multicolumn{3}{c}{}  \\[-13pt]
 \multicolumn{3}{c}{T_4}   
\end{array}
\]
\fref{figgy2} depicts each of the types given above as they would
appear in $F_1 \oplus F_2$, up to permuting the columns.

\begin{figure}[h]
$$\begin{array}{r|ccc|ccc|ccc|ccc|}
\multicolumn{1}{l}{} & 
\multicolumn{1}{l}{\longleftarrow} & 
\multicolumn{1}{c}{n/4} & 
\multicolumn{1}{r}{\longrightarrow} &
\multicolumn{1}{l}{\longleftarrow} & 
\multicolumn{1}{c}{n/4} & 
\multicolumn{1}{r}{\longrightarrow} & 
\multicolumn{1}{l}{\longleftarrow} & 
\multicolumn{1}{c}{n/4} & 
\multicolumn{1}{r}{\longrightarrow} & 
\multicolumn{1}{l}{\longleftarrow} & 
\multicolumn{1}{c}{n/4} & 
\multicolumn{1}{r}{\longrightarrow} \\
\cline{2-13}
r & (0,0) & \cdots & (0,0) & (1,1) & \cdots & (1,1) & (1,0) & \cdots & (1,0) & (0,1) & \cdots & (0,1) \\
\cline{2-13}
\cline{2-13}
T_1 & (1,1) & \cdots & (1,1) & (1,0) & \cdots & (1,0) & (0,1) & \cdots & (0,1) & (0,0) & \cdots & (0,0) \\
T_2 & (0,1) & \cdots & (0,1) & (0,0) & \cdots & (0,0) & (1,1) & \cdots & (1,1) & (1,0) & \cdots & (1,0) \\
T_3 & (1,1) & \cdots & (1,1) & (0,1) & \cdots & (0,1) & (0,0) & \cdots & (0,0) & (1,0) & \cdots & (1,0) \\
T_4 & (1,0) & \cdots & (1,0) & (0,0) & \cdots & (0,0) & (0,1) & \cdots & (0,1) & (1,1) & \cdots & (1,1) \\
\cline{2-13}
\end{array}$$
\caption{\label{figgy2}Some type $\gamma$ rows.} 
\end{figure}

Define each $E_6$ as type $T^*_i$ if $T_i$ is obtained from $T^*_i$ by changing
exactly $4$ entries.  We say that $A=A(r_1,r_2)$ has {\em type $T_i$} and
$r_2$ has {\em type $T_i$ with respect to $r_1$} if $A$ forms an $E_5$
of type $T_i$. We define type $T^*_i$ similarly and define
$A'(r_1,r_2)$ to be the same type as $A(r_1,r_2)$ for any pairs of
rows $r_1$ and $r_2$.  

We find several of the pairs $(p,q)$ for which the types defined above
are $(p,q)$-balanceable.  

\begin{lemma}\label{lem:nearbalE5E6}
Let $\{r,r'\}$ be a pair of rows in $F_1 \oplus F_2$ such that
$A=A(r,r')$ is equivalent to $E_5$ or $E_6$. If $A$ is of type $T^*_i$
for some~$i$, then $A$ is $(p,q)$-balanceable for
$(p,q) \in \{(0,1),(1,0),(1,1),(1,-1)\}$. If $A$ is of type $T_1$ or $T_2$, then
$A$ is $(1,0)$-balanceable and if $A$ is of type $T_3$ or $T_4$ then
$A$ is $(0,1)$-balanceable.
\end{lemma}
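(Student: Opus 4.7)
The plan is to invoke \lref{lem:balanz}: to show a matrix $A=[a_{ij}]$ is $(p,q)$-balanceable, it suffices to exhibit a $3\times 3$ matrix $B=[b_{ij}]$ with $0\le b_{ij}\le a_{ij}$, entry sum equal to $m=n/2$, row-sum difference equal to $p$, and column-sum difference equal to $q$. Under the equivalences on $A$ (swapping the first two rows, swapping the first two columns, transposing), the required pair transforms as $(p,q)\mapsto(-p,q)$, $(p,q)\mapsto(p,-q)$ and $(p,q)\mapsto(q,p)$, respectively, and one also has $(p,q)\equiv(-p,-q)$ by swapping $0$ and $1$ in the balancing rectangle. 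Since $(1,0)$ is preserved (up to sign) by the row and column swaps and is sent to $(0,1)$ by transposition, it is enough to verify the $E_5$ claim for type $T_1$ only. Since the set $\{(0,1),(1,0),(1,1),(1,-1)\}$ is closed under all these symmetries, it is enough to verify the $E_6$ claim for type $T_1^*$ only.

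For $T_1$ (the standard form of $E_5$), I would take
\[
B_0=\left[\begin{array}{ccc} y+1 & 0 & 0 \\ y & 0 & 0 \\ 0 & 2y+1 & 0 \end{array}\right].
\]
Its entries sum to $4y+2=m$, its row-sum difference is $(y+1)-y=1$, its column-sum difference is $(y+1+y)-(2y+1)=0$, and $B_0$ is entrywise dominated by both $E_5$ and $E_6$ (using $y\ge 1$ from \eref{e:assumptions}). Hence $B_0$ simultaneously witnesses $(1,0)$-balanceability for $T_1$ and for $T_1^*$.

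For the remaining pairs $(0,1)$, $(1,1)$, $(1,-1)$ in the $T_1^*$ case, I would use the matrices
\[
\left[\begin{array}{ccc} y+1 & 0 & 0 \\ y & 0 & 1 \\ 0 & 2y & 0 \end{array}\right],\quad
\left[\begin{array}{ccc} y+1 & 0 & 0 \\ y & 0 & 0 \\ 0 & 2y & 1 \end{array}\right],\quad
\left[\begin{array}{ccc} y+1 & 0 & 0 \\ y-1 & 0 & 1 \\ 0 & 2y+1 & 0 \end{array}\right],
\]
respectively. Each has entry sum $4y+2$, each exhibits the intended row- and column-sum differences, and each is entrywise bounded by $E_6$; the last requires $y\ge 1$, which again holds by \eref{e:assumptions}. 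With these five explicit matrices in hand, the lemma follows from \lref{lem:balanz}.

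The main obstacle is not mathematical depth but finding the $B$'s: the $E_5$ and $E_6$ exceptional configurations are ``tight'' in the sense of having many forced zeros, so the space in which one can place the entries of $B$ is narrow. However, once one realises that the isolated $1$'s in cells $(2,3)$ and $(3,3)$ of $E_6$ furnish exactly the flexibility needed to shift one unit of row-sum or column-sum, the correct choices present themselves, and every subsequent verification is a direct arithmetic check.
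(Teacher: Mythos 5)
Your proof is correct and takes essentially the same approach as the paper: reduce to the single representatives $T_1$ and $T_1^*$ using the symmetries $(p,q)\mapsto(-p,q)$, $(p,q)\mapsto(p,-q)$, $(p,q)\mapsto(q,p)$ and $(p,q)\equiv(-p,-q)$, then exhibit explicit balancing matrices via \lref{lem:balanz} --- and your four matrices coincide with the ones in the paper's proof. No gaps.
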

\begin{proof}
Consider the following array of type $T_1^*$,
\begin{equation}\label{e:T1}
\begin{array}{|c|c|c|}
\hline
 2y+1  & 0 &  0  \\
\hline
 2y  &0  & 1 \\
\hline
 0 & 4y+1 & 1 \\
\hline
\end{array}\;.
\end{equation}
By \lref{lem:balanz},
the following $4$ matrices  
$(p,q)$-balance the array above for $(p,q) =(0,1),(1,0),(1,1)$ and $(1,-1)$, respectively: 
\[
\begin{array}{|c|c|c|}
\hline
 y+1  & 0 &  0  \\
\hline
 y  &0  & 1 \\
\hline
 0 & 2y & 0 \\
\hline
\end{array}
\quad \quad
\begin{array}{|c|c|c|}
\hline
 y+1  & 0 &  0  \\
\hline
 y  &0  & 0 \\
\hline
 0 & 2y+1 & 0 \\
\hline
\end{array}
\quad \quad
\begin{array}{|c|c|c|}
\hline
 y+1  & 0 &  0  \\
\hline
 y  &0  &0 \\
\hline
 0 & 2y & 1 \\
\hline
\end{array}
\quad \quad
\begin{array}{|c|c|c|}
\hline
 y+1  & 0 &  0  \\
\hline
 y-1  &0  & 1 \\
\hline
 0 & 2y+1 & 0 \\
\hline 
\end{array}\;.
\]
By noting that an array is $(p,q)$-balanceable if and only if it is
$(-p,-q)$-balanceable, it follows that the array \eref{e:T1} is
$(p,q)$-balanceable for $(p,q) \in \{-1,0,1\}^2\setminus\{(0,0)\}$.
As the set $\{-1,0,1\}^2\setminus\{(0,0)\}$ is preserved under
negation of an entry or swapping the entries of the ordered pairs, it
follows that any array equivalent to \eref{e:T1}, is also
$(p,q)$-balanceable for $(p,q) \in \{-1,0,1\}^2\setminus\{(0,0)\}$.
As all arrays of type $E_6$ are equivalent, this proves the result
when $A = A(r,r')$ is of type $T_i^*$ with $i \in \{1,2,3,4\}$.  The
matrix above that $(1,0)$-balances \eref{e:T1} also $(1,0)$-balances
$A$ if $A$ is of type $T_1$.   Swapping the first two columns of
$T_1$ results in $T_2$, while $T_3$ and $T_4$ are the transpose of $T_1$
and $T_2$, respectively. It follows that $A$ is $(1,0)$-balanceable if
$A$ is of type $T_2$ and $A$ is $(0,1)$-balanceable if $A$ is of type
$T_3$ or $T_4$.
\end{proof}

\noindent
We also need to consider the balanceability of rows of type $\alpha$.

\begin{lemma}\label{lem:nearbalE1E2}
Let $\{r,r'\}$ be a pair of rows in $F_1 \oplus F_2$ such that
$A(r,r')$ has type $\alpha$.  Then $A(r,r')$ is $(0,1)$-balanceable,
$(1,0)$-balanceable and either $(1,1)$-balanceable or
$(1,-1)$-balanceable.
\end{lemma}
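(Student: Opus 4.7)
My plan is to follow the explicit-witness template of \lref{lem:nearbalE5E6}. Since $A = A(r,r')$ has type $\alpha$, it is equivalent (under swapping the first two rows, swapping the first two columns, and/or transposing) to one of the canonical matrices $E_1$ or $E_2$ from \lref{lem:exceptions}; in particular $n \equiv 2 \pmod 6$, so $n = 6x+2$ and $m = 3x+1$. I will exhibit, for each of $E_1$ and $E_2$, three $3 \times 3$ matrices $B$ that $(0,1)$-balance, $(1,0)$-balance, and $(1,1)$-balance the matrix in the sense of \lref{lem:balanz}.

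The paper already records (in the discussion preceding \lref{lem:num00alpha}) how the equivalence operations on $A$ act on the balanceability parameters: transpose sends $(p,q) \mapsto (q,p)$, a row swap sends $(p,q) \mapsto (-p,q)$, a column swap sends $(p,q) \mapsto (p,-q)$, and $(p,q)$-balanceability coincides with $(-p,-q)$-balanceability. Under these moves, the set $\{(0,1),(1,0)\}$ is closed modulo sign, so producing both $(0,1)$- and $(1,0)$-witnesses for each of $E_1$ and $E_2$ will propagate to both $(0,1)$- and $(1,0)$-balanceability of every equivalent $A$. The orbit of $(1,1)$ modulo sign is $\{(1,1),(1,-1)\}$, which produces exactly the ``either/or'' clause of the lemma.

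For $E_1$ the three witnesses I propose are
\[
B^{E_1}_{(0,1)} = \begin{pmatrix} x & 0 & 1 \\ 0 & 0 & x+1 \\ 0 & x-1 & 0 \end{pmatrix},\quad
B^{E_1}_{(1,0)} = \begin{pmatrix} x & 0 & 1 \\ 0 & 0 & x \\ 0 & x & 0 \end{pmatrix},\quad
B^{E_1}_{(1,1)} = \begin{pmatrix} x+1 & 0 & 0 \\ 0 & 0 & x \\ 0 & x & 0 \end{pmatrix},
\]
and for $E_2$ they are
\[
B^{E_2}_{(0,1)} = \begin{pmatrix} x+1 & 0 & 0 \\ 0 & 1 & x \\ 0 & x-1 & 0 \end{pmatrix},\quad
B^{E_2}_{(1,0)} = \begin{pmatrix} x+1 & 0 & 0 \\ 0 & 1 & x-1 \\ 0 & x & 0 \end{pmatrix},\quad
B^{E_2}_{(1,1)} = \begin{pmatrix} x+1 & 0 & 0 \\ 0 & 0 & x \\ 0 & x & 0 \end{pmatrix}.
\]

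The remaining work is routine verification that, for each witness, the entries sum to $m = 3x+1$, that the row-sum difference equals the advertised $p$, that the column-sum difference equals $q$, and that $0 \le b_{ij} \le a_{ij}$ entrywise. The only mildly delicate point---and the main (if minor) obstacle---is the non-negativity of the entries containing $x-1$, which is guaranteed precisely because \eref{e:assumptions} forces $x \ge 1$.
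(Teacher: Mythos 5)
Your proposal is correct and follows essentially the same route as the paper: exhibit explicit balancing matrices for the canonical forms $E_1$ and $E_2$ and then propagate $(0,1)$-, $(1,0)$- and $(1,1)$/$(1,-1)$-balanceability to every equivalent $A$ via the sign/transpose symmetries. The only cosmetic difference is that the paper uses three witnesses (for $(0,-1)$, $(-1,0)$ and $(1,1)$), each of which happens to balance both $E_1$ and $E_2$ simultaneously, whereas you supply six; your witnesses all verify correctly, with the same reliance on $x\ge 1$ from \eref{e:assumptions}.
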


\begin{proof}
Consider the arrays 
\begin{equation}\label{e:E1E2}
\begin{array}{|c|c|c|}
\hline
2x & 0  & 1 \\
\hline
  0  &  0 & 2x+1  \\
\hline
  0 & 2x & 0\\
\hline
\end{array}
\quad \text{and}\quad
\begin{array}{|c|c|c|}
\hline
2x+1 & 0 & 0 \\
\hline
  0  & 1 &  2x\\
\hline
  0 & 2x & 0\\
\hline
\end{array}
\end{equation}
of types $E_1$ and $E_2$, respectively.
By \lref{lem:balanz}, the following matrices $(p,q)$-balance
the arrays above
for $(p,q) = (0,-1)$, $(-1,0)$ and $(1,1)$, respectively:
\[
\begin{array}{|c|c|c|}
\hline
x & 0  & 0 \\
\hline
  0  &  0 & x  \\
\hline
  0 & x+1 & 0\\
\hline
\end{array}
\quad \text{and}\quad
\begin{array}{|c|c|c|}
\hline
x& 0 & 0 \\
\hline
  0  & 0 &  x+1\\
\hline
  0 & x & 0\\
\hline
\end{array}
\quad \text{and}\quad
\begin{array}{|c|c|c|}
\hline
x+1 & 0  & 0 \\
\hline
  0  & 0  & x  \\
\hline
  0 & x & 0\\
\hline
\end{array}\;.
\]
As an array is $(p,q)$-balanceable if and only if it is $(-p,-q)$-balanceable and
$A=A(r,r')$ is equivalent to one of the arrays in \eref{e:E1E2}, it follows that $A$ is both
$(0,1)$-balanceable and $(1,0)$-balanceable and either $(1,1)$-balanceable or
$(1,-1)$-balanceable.
\end{proof}

\begin{corollary}
\label{cor:setwise} 
Let $r_1$, $r_2$, $r_3$, $r_4$, $r_5$ and $r_6$ be distinct rows. 
If $A(r_1,r_2)$ is of type $\alpha$ and $A(r_3,r_4)$ is of type $\alpha$ or $\gamma$, then 
$\{r_1,r_2,r_3,r_4\}$ is balanceable. 
Moreover, if $A(r_1,r_2)$, $A(r_3,r_4)$  and $A(r_5,r_6)$ 
are each of type $\alpha$, then the set 
$\{r_1,r_2,r_3,r_4,r_5,r_6\}$ is balanceable. 
\end{corollary}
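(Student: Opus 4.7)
The plan is to invoke \lref{lem:balsets}: for each of the pairs in the stated collection, I will identify a target $(p_i,q_i)$ such that the pair is $(p_i,q_i)$-balanceable, together with signs $\epsilon_i\in\{\pm 1\}$ making $\sum_i\epsilon_i p_i=0$ and $\sum_i\epsilon_i q_i=0$. The sign here encodes the symmetry that $(p,q)$-balanceable is equivalent to $(-p,-q)$-balanceable (obtained by swapping $0$ and $1$ in the balancing rectangle), and corresponds to assigning each pair to one of the two equal-sum parts in \lref{lem:balsets}.

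For the first statement, \lref{lem:nearbalE1E2} gives that the type-$\alpha$ pair $\{r_1,r_2\}$ is both $(1,0)$- and $(0,1)$-balanceable. The pair $\{r_3,r_4\}$ is likewise $(1,0)$- or $(0,1)$-balanceable: if it is of type $\alpha$ this is immediate from \lref{lem:nearbalE1E2}, while if it is of type $\gamma$ then \lref{lem:nearbalE5E6} shows that canonical types $T_1,T_2$ supply $(1,0)$, types $T_3,T_4$ supply $(0,1)$, and the $T^*_i$ forms supply both. In every case one can choose a common $(p,q)\in\{(1,0),(0,1)\}$, set $(p_1,q_1)=(p_2,q_2)=(p,q)$, and take $\epsilon_1=-\epsilon_2$ to close the argument via \lref{lem:balsets}.

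For the second statement, all three pairs are of type $\alpha$, but the naive strategy of using $(1,0)$-balance for all three fails because three signs in $\{\pm 1\}$ cannot sum to zero. To break this parity obstruction I will exploit the final clause of \lref{lem:nearbalE1E2}: every type-$\alpha$ pair is $(1,\delta)$-balanceable for some pair-dependent $\delta\in\{\pm 1\}$. Letting $\delta$ be the value supplied by $\{r_5,r_6\}$, I will assign $(p_1,q_1)=(1,0)$, $(p_2,q_2)=(0,1)$, $(p_3,q_3)=(1,\delta)$. The cancellation conditions then reduce to $\epsilon_1+\epsilon_3=0$ and $\epsilon_2+\delta\epsilon_3=0$, which are solved by $\epsilon_3=1$, $\epsilon_1=-1$, $\epsilon_2=-\delta$.

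The argument is essentially a bookkeeping exercise once the balanceability inputs are gathered, so the main obstacle is not conceptual depth but care with the case analysis: in the first part one must check every $\gamma$-subtype of $\{r_3,r_4\}$ (noting that $T_3,T_4$ provide only $(0,1)$-balanceability while $T_1,T_2$ provide only $(1,0)$-balanceability), and in the second part one must correctly deploy the $\delta$-dichotomy so that a valid sign assignment exists regardless of which of the two values $\delta$ takes.
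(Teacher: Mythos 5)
Your proposal is correct and follows essentially the same route as the paper: it gathers the $(p,q)$-balanceability data from \lref{lem:nearbalE1E2} and \lref{lem:nearbalE5E6} and then cancels via \lref{lem:balsets}, using the $(p,q)\leftrightarrow(-p,-q)$ symmetry as the sign choice. The only cosmetic difference is in the second part, where the paper uses the triple $(1,\pm1)$, $(-1,0)$, $(0,1)$ while you use $(1,0)$, $(0,1)$, $(1,\pm1)$; both sign assignments close the sums.
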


\begin{proof}
The first result follows directly from the previous two lemmas and
\lref{lem:balsets}.  For the second result, by \lref{lem:nearbalE1E2},
$A(r_1,r_2)$ can be $(1,1)$-balanced or $(1,-1)$-balanced,
$A(r_3,r_4)$ can be $(-1,0)$-balanced and $A(r_5,r_6)$ can be
$(0,1)$-balanced. The result follows from \lref{lem:balsets}.
\end{proof}

To analyse sets of rows of type $\gamma$ more closely, we consider
tables similar to \fref{figgy2}.  Let $S_i$ be the subtable of the
table in \fref{figgy2}, with rows $r$ and $T_i$.  Also, let $S^*_i$ be
a table formed from $S_i$, by replacing the row $T_i$ with a row $r'$
that has type $T_i^*$ with respect to $r$, the first row of $S_i$.  By
\lref{lem:num00gamma}, any pair of rows of type $\gamma$ include one
row of type $\gamma_1$, that is, a row with exactly $n/4$ occurrences
of $(a,b)$ for each $(a,b) \in \{0,1\}^2$.  Therefore, any pair of
rows of type $\gamma$ is equivalent to the rows of $S_i$ or an $S_i^*$
for some $i \in \{1,2,3,4\}$.

Note that by swapping $F_1$ with $F_2$ and rearranging columns, we map
$S_1$ to $S_3$ and $S_2$ to $S_4$ and vice versa.  Moreover, swapping
the symbols in $F_1$ and rearranging columns maps $S_3$ to $S_4$ and vice versa, while 
fixing $S_1$ and $S_2$. Thirdly, 
swapping
the symbols in $F_2$ and rearranging columns maps $S_1$ to $S_2$ and vice versa, while 
fixing $S_3$ and $S_4$. 
Thus, the tables $S_1$, $S_2$, $S_3$ and $S_4$ are equivalent, and
each $S_i^*$ is equivalent to some $S_1^*$.  We will exploit these
facts in the following lemmas.

Next, define $S_{i,j}$ to be the table formed by rows $r$, $T_i$ and
$T_j$ from \fref{figgy2} (if $i=j$ then we repeat row $T_i$).  Let
$S_{i,j}^*$ be a table $S_{i,j}$ with row $T_i$ replaced by a row of
type $T_i^*$ with respect to $r$ and/or with row $T_j$ replaced by a
row of type $T_j^*$ with respect to $r$.  As above, every table
$S_{i,j}$ is equivalent to either $S_{1,1}$, $S_{1,2}$ or $S_{1,3}$
and each $S_{i,j}^*$ is equivalent to $S_{1,1}^*$, $S_{1,2}^*$ or
$S_{1,3}^*$.

In \lref{lem:legswap} below, we show that any $S^*_i$ can be formed
from $S_i$ by swapping particular entries. The definition of such a
swap is in part motivated by the following observation that
will be used throughout this subsection.  By inspecting
\fref{figgy2}, one notices that if $A(r,r')$ is of type $T_3$
or $T_4$, then rows $r$ and $r'$ of $F_1$ are complementary.
Similarly, if $A(r,r')$ is of type $T_1$ or $T_2$, then rows $r$ and $r'$
of $F_2$ are complementary.  Let $r'$
be the second row of $S_1$ or $S_2$.  Then a {\em legitimate swap} in
$r'$ is a swap which replaces two cells containing 
$(a,b)$ and $(c,1-b)$ with $(a,1-b)$ and $(c,b)$, or with $(c,1-b)$ and
$(a,b)$, respectively. A legitimate swap in the second row of $S_3$ or
$S_4$ is defined analogously by interchanging the roles of $F_1$ and $F_2$.

\begin{lemma}\label{lem:legswap}
Let $i \in \{ 1,2,3,4\}$.  Then any $S^*_i$ can be
formed from $S_i$ by performing one legitimate swap in the second row.
In particular, every $S^*_i$ has exactly two columns where 
both rows have the same first entry if $i \in \{3,4\}$  
or the same second entry if $i \in \{1,2\}$.  
\end{lemma}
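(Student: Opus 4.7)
The plan is to pass through the $4\times 4$ matrix $A'(r,r')$ rather than the condensed $3\times 3$ matrix $A(r,r')$, using \lref{lem:standexc} as the main tool. By that lemma, up to equivalence any $A'(r,r')$ of type $T_i$ has the form $F+B_5$, while any $A'(r,r')$ of type $T_i^*$ has the form $F+B_6$ for one of the two explicit choices of $B_6$ listed there. The key observation is that $B_5$ and each choice of $B_6$ differ in exactly four entries of the $4\times 4$ matrix.

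The next step is to translate this four-entry discrepancy into a statement about columns of $F_1\oplus F_2$. Each column of $F_1\oplus F_2$ contributes one unit to exactly one entry of $A'(r,r')$, so altering the value of $r'$ in a single column decrements one entry of $A'$ by $1$ and increments another by $1$. The transition from $B_5$ to $B_6$ therefore reflects modifications of $r'$ in exactly two columns, while $r'$ agrees with the corresponding entry of $T_i$ elsewhere. I would then read off the explicit positions of the four unit entries from \lref{lem:standexc} to determine the pre-swap and post-swap contents of those two affected cells of $r'$. Case-checking each form of $B_6$ (and $B_4$), the original and new contents at the two cells will turn out to be $(a,b),(c,1-b)$ and $(a,1-b),(c,b)$, or $(a,b),(c,1-b)$ and $(c,1-b),(a,b)$, with second entries replaced by first entries when $i\in\{3,4\}$; either possibility exactly matches the definition of a legitimate swap.

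Finally, the ``in particular'' assertion is deduced by combining the above with the complementarity observation noted just before the lemma: in $S_i$ with $i\in\{1,2\}$, rows $r$ and $T_i$ are complementary in $F_2$, so no column of $S_i$ has both rows agreeing in second entry. A legitimate swap in $r'$ changes the second entries of $r'$ in exactly two columns, yielding precisely two columns of $S_i^*$ where $r$ and the new $r'$ have matching second entries, and no others. The argument for $i\in\{3,4\}$ is identical after interchanging the roles of $F_1$ and $F_2$.

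The principal obstacle is the case analysis imposed by the two possible forms of $B_6$ (and $B_4$) in \lref{lem:standexc}: I would need to check that each form corresponds genuinely to one of the two options in the legitimate-swap definition, rather than to something structurally more complicated like a composition of swaps. Once this bookkeeping is handled and the equivalence operations on $A'$ are used to align the two affected rows/columns to the template, the proof is a short combinatorial check that turns a four-entry discrepancy in $A'(r,r')$ into the data of exactly one legitimate swap in $r'$.
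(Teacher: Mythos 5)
Your overall strategy is close to the paper's own argument, but the pivotal step is not justified as written. You claim that because $F+B_5$ and $F+B_6$ differ in exactly four entries of $A'$, and because altering $r'$ in one column moves one unit of count from one entry of $A'$ to another, the row $r'$ must differ from $T_i$ in exactly two columns. That inference only gives a lower bound of two: if $r'$ differed from $T_i$ in three or more columns, the unit transfers could partially cancel and still produce a net discrepancy consisting of two $+1$'s and two $-1$'s. Ruling this out requires an observation you have not supplied, namely that the sixteen entries of $A'(r,r')$ split into four groups of four indexed by the entry of the fixed first row $r$ (that entry determines whether the $F_1$-pair lies in $\{v_1,v_3\}$ or $\{v_2,v_4\}$, and likewise for the $F_2$-pair), so no cancellation between columns lying in different quarters of $S_i$ can occur; within a quarter, all columns of $S_i$ occupy a single cell of $A'$, and the number of columns where $r'$ deviates from $T_i$ equals the decrease in that one cell, which $B_6-B_5$ shows is $1+1+0+0=2$. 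With that supplement your argument closes; without it, the assertion ``exactly two columns'' is unsupported, and that assertion is precisely the content of the lemma, since one legitimate swap changes exactly two cells. The paper reaches the same conclusion by a different bookkeeping: it reads off from the third column of the condensed $A(r,r')$ that exactly two columns have matching second entries in rows $r$ and $r'$ (there are none in $S_i$, by complementarity in $F_2$), and then uses the fact that $r'$ has $n/2$ zeros in each of $F_1$ and $F_2$ to pin down the first entries; this simultaneously yields the ``in particular'' clause.

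A secondary issue is your reliance on \lref{lem:standexc}, which describes $E_5$ and $E_6$ only up to equivalence. In the present lemma the first row $r$ and the identity of $S_i$ are fixed, so you cannot freely normalise $A'(r,r')$ to the displayed $F+B_6$; you must account for all admissible $4\times4$ matrices whose condensation is a $T_i^*$ (for instance, the two unit entries in the third condensed column may sit in either of the first two condensed rows, and in any of the $4\times4$ cells they condense from). This is more case analysis than the ``two forms of $B_6$'' you anticipate, and the mention of $B_4$ is a red herring since it pertains to the type $\beta$ exception $E_4$. Your deduction of the final sentence of the lemma from the complementarity of $r$ and $T_i$ in $F_2$ (respectively $F_1$) is correct and matches the paper.
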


\begin{proof}
From the symmetries described above, it suffices to consider the case
when $i=1$.  Let $r$ and $r'$ be the first and second rows of a table
$S_1^*$, respectively.  Observe that $S^*_1$ will differ from
$S_1$ by two columns; these correspond to the $1$'s in the 3rd column
of $A(r,r')$. As every $S_i$ and $S_i^*$ have the same first row,
$S^*_1$ in fact only differs from $S_1$ by two cells in the second
row.  Let $(a,b)$ and $(c,d)$ be the entries in the two cells of $S_1$
that differ from those in $S^*_1$.  In any column of $S^*_1$
corresponding to a $1$ in the third column of $A(r,r')$, the ordered
pairs in the two rows of $S^*_1$ must have the same second entry.  On
the other hand, in every column of $S_1$ the ordered pairs in both
rows have a different second entry. So $(a,b)$ and $(c,d)$ in $S_1$
are replaced with $(a',1-b)$ and $(c',1-d)$, respectively, for some
$a',c'$.  The number of $0$'s in row $r'$ of each of $F_1$ and $F_2$
is $n/2$ only if we have the multiset equalities $\{a',c'\} = \{a,c\}$
and $\{1-b,1-d\}=\{b,d\}$, respectively.  It quickly follows that
replacing $(a,b)$ and $(c,d)$ with $(a',1-b)$ and $(c',1-d)$,
respectively, must be a legitimate swap.
\end{proof}
 
It is an immediate corollary of \lref{lem:legswap} that any
$S_{i,j}^*$ can be formed from $S_{i,j}$ by performing a legitimate
swap in the second row and/or by performing a legitimate swap in the
third row.  We now consider two rows $r_1$ and $r_2$ that do not
balance with a given row $r$ of type $\gamma_1$.  By the comments
above, any three such rows $r,r_1$ and $r_2$ are equivalent to the
rows of $S_{1,j}$ or an $S_{1,j}^*$ for $j\in \{1,2,3\}$.
Next, we focus on these situations.

Here and for the remainder of the section, we use the following definition.
Let $C$ be a subset of the columns of $F_1\oplus F_2$. 
Then let $A_C(r_1,r_2)$ be defined analogously to $A(r_1,r_2)$, 
where each cell $(i,j)$ of $A_C(r_1,r_2)$ only counts columns in $C$.

\begin{lemma}\label{lem:unbalpairsconfig}
Let $r$, $r_1$ and $r_2$ be the three rows of $S_{1,j}$ or an $S_{1,j}^*$
for $j \in \{1,2,3\}$.
Then the matrix $A(r_1,r_2)$ is of the form $A+B$, where $B$ is an
admissible matrix whose entry sum is $8$ and $A$ is the following
configuration,
\[
\begin{array}{|c|c|c|}
\hline
 0  & 0 &  0  \\
\hline
 0  &0  & 0 \\
\hline
 0 & 0 & 8y-4 \\
\hline
\end{array} \;,
\quad\quad
\begin{array}{|c|c|c|}
\hline
 0  & 0 & 4y-2  \\
\hline
 0  &0  & 4y-2 \\
\hline
 0 & 0 & 0 \\
\hline
\end{array} \;,
\quad\quad
\begin{array}{|c|c|c|}
\hline
 0  & 0 &  2y-1  \\
\hline
 2y-1  &0  & 0 \\
\hline
 0 & 2y-1 & 2y-1 \\
\hline
\end{array} 
\] 
when $j=1$, $j=2$ and $j=3$, respectively.
\end{lemma}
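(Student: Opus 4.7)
The plan is to exploit the fact that admissibility of a $3\times 3$ matrix --- equal row-$1$ and row-$2$ sums, equal column-$1$ and column-$2$ sums, and even row-$3$ and column-$3$ sums --- is a set of linear and parity conditions closed under addition and subtraction. Since \lref{sumz} asserts that $A(r_1,r_2)$ is admissible with entry sum $n = 8y+4$ whenever $r_1,r_2$ are rows of $F_1 \oplus F_2$, it suffices to verify that the specified matrix $A$ is itself admissible and has entry sum $n-8 = 8y-4$; then $B := A(r_1,r_2)-A$ is automatically admissible with entry sum $8$, and the decomposition holds uniformly for both starred and unstarred tables.

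First I would check by inspection that each of the three candidate configurations for $A$ is admissible with entry sum $8y-4$. For $j=1$ the only non-zero entry is $8y-4$ at position $(3,3)$, so all the required row/column-sum equalities and parities are trivial. For $j=2$ the two copies of $4y-2$ at $(1,3)$ and $(2,3)$ give matched row-$1$ and row-$2$ sums of $4y-2$, matched column-$1$ and column-$2$ sums of $0$, and even row-$3$ (namely $0$) and column-$3$ (namely $8y-4$) sums. For $j=3$ the four entries $2y-1$ produce matched row and column sums of $2y-1$ each, with even row-$3$ and column-$3$ sums $4y-2$. In every case the total entry sum is exactly $8y-4$.

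To confirm that this specific $A$ is the natural one, I would compute $A(r_1,r_2)$ directly from \fref{figgy2} for the unstarred tables: each of the four blocks of $n/4 = 2y+1$ columns in $S_{1,j}$ contributes a single unit to one cell of $A(r_1,r_2)$ after condensing (recalling that columns with $(0,0)$ or $(1,1)$ in both rows land in the $(3,3)$-cell). For $S_{1,1}$ this yields $\mathrm{diag}(2y+1,2y+1,4y+2)$ and hence $B=\mathrm{diag}(2y+1,2y+1,6-4y)$; the $S_{1,2}$ and $S_{1,3}$ cases give analogous matrices with the non-zero entries of $A(r_1,r_2)$ at positions complementary to the support of $A$ (plus a correction in the $(1,3),(2,3)$ or $(1,3),(3,3)$ entries). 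For the starred variants $S^*_{1,j}$, \lref{lem:legswap} tells us that each legitimate swap changes only two cells of one row, so at most four columns of $F_1 \oplus F_2$ differ from the unstarred case; this perturbs $A(r_1,r_2)$ by an admissible matrix of zero entry sum, so the decomposition $A(r_1,r_2)=A+B$ with $B$ admissible of entry sum $8$ extends without difficulty.

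The main obstacle is purely the bookkeeping in the condensation step from $A'$ to $A$, particularly correctly merging the $(0,0)$ and $(1,1)$ contributions into the $(3,3)$-cell and tracking the images of legitimate swaps through the $v_i$ indexing; once this is done carefully, the verification reduces to short arithmetic.
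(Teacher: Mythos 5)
Your proposal has two genuine problems. The first is that your opening reduction proves only a vacuous form of the lemma: defining $B:=A(r_1,r_2)-A$ and checking that $A$ is admissible with entry sum $n-8$ would establish the stated decomposition for \emph{any} pair of rows of $F_1\oplus F_2$, with no reference to $S_{1,j}$ at all. The actual content of the lemma is that $B$ can be taken to be a \emph{non-negative} admissible matrix, equivalently that $A(r_1,r_2)$ dominates $A$ entrywise, and this is exactly what the later applications use: \lref{lem:3gammarows} rules out the exceptional configurations by reading off lower bounds such as $a_{33}\ge 8y-4$ from $A$, and \lref{lem:balpart4rows} needs to peel off a non-negative summand in order to balance it. The paper secures non-negativity by exhibiting $B$ as $A_C(r_1,r_2)$ for an explicit $8$-column set $C$ consisting of two columns from each block of \fref{figgy2} and containing every column involved in a legitimate swap, so that $A=A_{C'}(r_1,r_2)$ on the complementary columns; your subtraction argument never addresses this.

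The second problem is that your direct computation uses the wrong indexing for $A'(r_1,r_2)$. The rows and columns of $A'$ are indexed by the column-pairs $(F_1[r_1,c],F_1[r_2,c])$ and $(F_2[r_1,c],F_2[r_2,c])$ respectively, not by the ordered pairs sitting in rows $r_1$ and $r_2$ of $F_1\oplus F_2$; this is forced by the worked example in \sref{ss:eg} (the columns with $(0,0)$ above $(1,1)$ are counted in $a'_{11}$, not $a'_{34}$) and by the requirement that $A(r,T_1)$ literally equal $E_5$. Consequently $a_{33}$ counts the columns in which $r_1$ and $r_2$ carry the \emph{same} ordered pair, so for $S_{1,1}$ every column contributes to $a_{33}$ and $A(r_1,r_2)=\mathrm{diag}(0,0,8y+4)$, not $\mathrm{diag}(2y+1,2y+1,4y+2)$. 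Your resulting $B=\mathrm{diag}(2y+1,2y+1,6-4y)$ is therefore incorrect, and it even has a negative entry once $y\ge2$, which would invalidate the subsequent uses of the lemma; the analogous miscounts occur for $j=2,3$. With the correct convention the unstarred computation does give the three displayed configurations on the $8y-4$ columns of $C'$, and the starred cases then follow because the (at most four) swap columns were all placed inside $C$.
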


\begin{proof}
Let $S$ be $S_{1,j}$ or an $S_{1,j}^*$ for $j \in \{1,2,3\}$ with rows $r,r_1,r_2$.
Choose a set $C$ of 8
columns of $S$ such that exactly two columns contain the pair $(a,b)$
in row $r$ for each $(a,b)$ and every column involved in the
legitimate swaps in $r_1$ and $r_2$ (if any) are in $C$.  Note that by
\lref{lem:legswap}, such a set of columns exists, as a legitimate swap
cannot change two cells containing the same ordered pair.  Let
$C'=N(n)\setminus C$.  Then by considering \fref{figgy2},
it is easy to check that $A=A_{C'}(r_1,r_2)$ is the configuration
given in the lemma, when $j=1$, $j=2$ and $j=3$, respectively.
Finally, $B=A_{C}(r_1,r_2)$ clearly has entry sum $8$, and is
admissible, since $A(r_1,r_2)$ and $A$ are admissible. As $A(r_1,r_2)
= A_{C}(r_1,r_2)+A_{C'}(r_1,r_2)$, the result follows.
\end{proof}

We can now show a stronger analogue of Lemmas~\ref{lem:4alpharows}
and~\ref{lem:4betarows} for rows of type $\gamma$.

\begin{lemma}\label{lem:3gammarows}
In any set of three rows of type $\gamma$ in $F_1 \oplus F_2$
at least one pair of the rows is balanceable.
\end{lemma}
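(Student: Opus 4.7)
The plan is to argue by contradiction. Suppose that $r$, $r_1$, $r_2$ are three distinct rows of type $\gamma$ in $F_1 \oplus F_2$ and that no pair among them is balanceable. Then by \lref{lem:exceptions} each of the three pairs is equivalent to one of $E_1,\dots,E_6$; combining this observation with \lref{l:not2difftype}, after treating a small number of sporadic low-$n$ cases in which a row can simultaneously carry two types, we may assume that every pair is of type $\gamma$. Since each pair of type $\gamma$ contains at least one $\gamma_1$-row by \lref{lem:num00gamma}, at least two of $r,r_1,r_2$ are of type $\gamma_1$; we take these to be $r$ and $r_1$. Using the column-permutation and symbol-swap symmetries among the four types $T_1,T_2,T_3,T_4$ described just before \lref{lem:unbalpairsconfig}, we may then arrange that the triple $(r,r_1,r_2)$ corresponds to $S_{1,j}$ or $S_{1,j}^*$ for some $j\in\{1,2,3\}$.

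I will then invoke \lref{lem:unbalpairsconfig} to write $A(r_1,r_2)=A+B$, where $A$ is the explicit matrix from that lemma (depending on $j$) and $B$ is admissible with entry sum $8$. The aim is to show in each of the three sub-cases that $\{r_1,r_2\}$ is in fact balanceable, contradicting our standing assumption. In cases $j=1$ and $j=3$, \fref{figgy2} shows that $r_1$ and $r_2$ agree on many columns: for $j=1$ they agree everywhere except on the at most four cells affected by legitimate swaps, and for $j=3$ they agree on the whole first quadrant ($2y+1$ columns) minus at most a couple of swap-perturbed cells. For $n$ beyond a small threshold this yields at least two agreeing columns, so \Cref{goodcases} delivers balanceability; the single remaining small-$n$ endpoint (only $n=12$ arises) can be verified directly from the restricted structure of legitimate swaps, which cannot eliminate all agreement simultaneously. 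In case $j=2$ the rows $T_1$ and $T_2$ disagree in every column, so \Cref{goodcases} is unavailable; here I will instead compute $A(r_1,r_2)$ directly and exhibit, via \lref{lem:balanz}, a balancing matrix of the form $B'=\bigl(\begin{smallmatrix}0 & 0 & s\\ 0 & 0 & s\\ 2y+1-s & 2y+1-s & 0\end{smallmatrix}\bigr)$ with $s\in\{0,1,\dots,2y+1\}$ for the unperturbed triple, and argue that the bounded perturbations coming from $S_{1,2}^*$ can be absorbed by a small adjustment of $s$ and of the other entries of $B'$.

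The main obstacle is the case $j=2$: because $T_1$ and $T_2$ never coincide in any column, the convenient tool \Cref{goodcases} does not apply and one must work directly with the structural decomposition $A(r_1,r_2)=A+B$ to produce a valid balancing matrix. The subtle point there is to verify that the legitimate-swap perturbations of $S_{1,2}^*$ do not upset the balancing, and a secondary subtle point is pinning down the tightest small-$n$ endpoints in the $j=3$ analysis, where the margin provided by the agreement on the first quadrant is smallest.
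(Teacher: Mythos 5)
Your overall architecture matches the paper's: pick a $\gamma_1$ row $r$ whose two pairs $\{r,r_1\},\{r,r_2\}$ are not balanceable, reduce by equivalence to $S_{1,j}$ or an $S_{1,j}^*$, invoke \lref{lem:unbalpairsconfig} to write $A(r_1,r_2)=A+B$ with $B$ admissible of entry sum $8$, and close each case $j\in\{1,2,3\}$. Cases $j=1$ and $j=2$ do go through, but you have the difficulty exactly backwards: $j=2$ is the \emph{easiest} case, not the main obstacle. There $A+B$ has $a_{13},a_{23}\ge 4y-2\ge 2$, whereas every matrix equivalent to one of $E_1,\dots,E_6$ has $\min(a_{13},a_{23})\le 1$ (the equivalence operations only permute $\{a_{13},a_{23}\}$ with $\{a_{31},a_{32}\}$, and each exceptional configuration has a $0$ or $1$ in both multisets). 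So \lref{lem:exceptions} rules out non-balanceability immediately, and no explicit balancing matrix of the form you propose is needed; your direct construction would also have to enumerate the possible admissible $B$ of sum $8$ to respect the constraint $0\le b'_{ij}\le a_{ij}$ of \lref{lem:balanz}, which is avoidable work.

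The genuine gap is the $j=3$, $n=12$ endpoint, which you dispose of in one clause. With $n=12$ the first quadrant has only $n/4=3$ columns on which the unperturbed rows both carry $(1,1)$, and each of the two legitimate swaps can destroy one of them, so the argument ``the swaps cannot eliminate all agreement simultaneously'' only guarantees $a_{33}\ge 1$. That is precisely the value of $a_{33}$ in $E_4$ and $E_6$, so one surviving agreement column rules out nothing, and \Cref{goodcases} (which needs two such columns) is unavailable. The paper has to work substantially harder here: it first eliminates $E_4$ because $12\not\equiv 4\pmod 6$, then shows that for $A(r_1,r_2)$ to be equivalent to $E_6$ both legitimate swaps would be forced to replace a $(1,1)$ by a $(0,0)$ in distinct first-quadrant columns, computes the resulting matrix explicitly, and verifies it matches neither of the two candidate $E_6$ forms in \eref{e:n=12-3rowsproof}. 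Your sketch names no substitute for this step, and the tool you do name for it provably cannot succeed, so as written the proof is incomplete at exactly the point where the lemma is tight.
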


\begin{proof}
Let $r,r_1,r_2$ be three rows of type $\gamma$ in $F_1\oplus F_2$.
By Lemmas~\ref{lem:num00gamma} and \ref{l:not2difftype}, without loss of generality, we can assume
that $r$ is of type $\gamma_1$ and neither $\{r,r_1\}$ nor
$\{r,r_2\}$ is balanceable.  Also by equivalence, we can assume that
$r,r_1$ and $r_2$ are the rows of $S_{1,j}$ or an $S_{1,j}^*$ 
for $j \in \{1,2,3 \}$.  As $n > 4$, a row of
type $\gamma$ exists in $F_1 \oplus F_2$ only if $n \geq 12$.
By comparing the configurations $A$ in \lref{lem:unbalpairsconfig}
with the exceptional configurations in \lref{lem:exceptions},
$\{r_1,r_2\}$ is balanceable unless $n=12$, $j=3$ and
$A(r_1,r_2)=A+B$ where $A$ is the last configuration in
\lref{lem:unbalpairsconfig}, so assume these conditions hold.
The only exceptional configurations in
\lref{lem:exceptions} consistent with the last configuration of
\lref{lem:unbalpairsconfig} are $E_4$ and $E_6$.  However, $E_4$ can only
occur if $n \equiv 4 \pmod 6$, so we can assume that $A(r_1,r_2)$ is
equivalent to $E_6$.  

Let $S$ be $S_{1,3}$ or the $S_{1,3}^*$ with rows $r,r_1$ and $r_2$.
Then $S$ can be formed from 
$S_{1,3}$ by performing at most one
legitimate swap in the second row and at most one legitimate swap in
the third row.  Thus, given that the last two rows of $S_{1,3}$, that is, the rows $T_1$ and $T_3$ of \fref{figgy2},
correspond to the configuration
\[
\begin{array}{|c|c|c|}
\hline
 0  & 0 &  3  \\
\hline
 3  &0  & 0 \\
\hline
 0 & 3 & 3 \\
\hline
\end{array}
\]
and $A(r_1,r_2)$ is equivalent to $E_6$,
$A(r_1,r_2)$ must be either
\begin{equation}\label{e:n=12-3rowsproof}
\begin{array}{|c|c|c|}
\hline
 2  & 0 &  1  \\
\hline
3  &0  & 0 \\
\hline
 0 & 5 & 1 \\
\hline
\end{array}
\quad \text{or} \quad 
\begin{array}{|c|c|c|}
\hline
 0  & 0 &  5  \\
\hline
 3  &2  & 0 \\
\hline
 0 & 1 & 1 \\
\hline
\end{array}\,.
\end{equation}
Now, as $S_{1,3}$ has $n/4 =3$ columns where the second and third rows both
contain the pair $(1,1)$, $S$ must differ from $S_{1,3}$ by exactly
one legitimate swap in each of the last two rows. Moreover, the two
legitimate swaps must include distinct columns that contain $(1,1)$.
The legitimate swap in row $r_1$ replaces a $(1,1)$ in some column $c$
with $(1,0)$ or $(0,0)$.  As the legitimate swap in row $r_2$ cannot
occur in column $c$, replacing a $(1,1)$ with a $(1,0)$ in row $r_1$
is inconsistent with the configurations in \eref{e:n=12-3rowsproof},
since the entry in cell $(3,1)$ of $A(r_1,r_2)$ would then be at least
$1$.  Therefore, the legitimate swap in row $r_1$ is one which swaps a
$(1,1)$ with a $(0,0)$.  By a similar argument, the legitimate swap in
row $r_2$ is also one which swaps a $(1,1)$ with a $(0,0)$.  Hence,
$A(r_1,r_2)$ must be
\[
\begin{array}{|c|c|c|}
\hline
 1  & 0 &  3  \\
\hline
 3  &1  & 0 \\
\hline
 0 & 3 & 1 \\
\hline
\end{array}\,,
\]
which is neither of the configurations in \eref{e:n=12-3rowsproof}.
It follows that $\{r_1,r_2\}$ is
balanceable even when $n=12$, completing the proof.
\end{proof}

\begin{lemma}\label{l:any4rows}
If $n>8$, then any set of four rows in $F_1 \oplus F_2$ contains a
balanceable pair.
\end{lemma}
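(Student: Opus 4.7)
The plan is to argue by contradiction. Suppose $r_1, r_2, r_3, r_4$ are four rows of $F_1 \oplus F_2$ for which every one of the six pairs is non-balanceable. By \lref{lem:exceptions} each such pair is of type $\alpha$, $\beta$, or $\gamma$. The key observation I would use throughout is that types $\alpha$ and $\beta$ cannot both occur for the same $n$, because the exceptional configurations $E_1, E_2$ force $n \equiv 2 \pmod 6$ while $E_3, E_4$ force $n \equiv 4 \pmod 6$.

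First I would rule out having three rows of type $\gamma$: if say $r_1,r_2,r_3$ were all type $\gamma$, then \lref{lem:3gammarows} would produce a balanceable pair among them, contradicting our assumption. So at least two of the four rows are not of type $\gamma$; relabel so that $r_1$ and $r_2$ are two such rows. Since neither $r_1$ nor $r_2$ is of type $\gamma$, no pair that contains $r_1$ or $r_2$ can be of type $\gamma$. Hence the five pairs $\{r_1,r_2\}, \{r_1,r_3\}, \{r_1,r_4\}, \{r_2,r_3\}, \{r_2,r_4\}$ are each of type $\alpha$ or $\beta$.

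Because types $\alpha$ and $\beta$ cannot coexist, these five pairs share a common type. Each of the four rows lies in at least one of the five pairs, so every row is of that common type. If the common type is $\beta$, then \lref{lem:4betarows} provides a balanceable pair among the four rows, a contradiction. If it is $\alpha$, then \lref{lem:4alpharows} (precisely where the hypothesis $n > 8$ is used) provides such a pair, again a contradiction. This completes the argument.

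The bulk of the work is front-loaded into \lref{lem:4alpharows}, \lref{lem:4betarows}, and \lref{lem:3gammarows}, so the remaining step is largely bookkeeping. The one subtlety to watch for is that by \lref{l:not2difftype} a single row may carry more than one type when $n = 20$; however, this does not obstruct the argument since we only need each row to be \emph{of} the designated type, not to be \emph{exclusively} so, and in particular a pair $\{r_3,r_4\}$ that happens to be of type $\gamma$ is perfectly compatible with all four rows being of type $\alpha$. The main conceptual point — and the cleverest piece — is noticing that the incompatibility of $\alpha$ with $\beta$, combined with \lref{lem:3gammarows}, forces the five ``crossing'' pairs through $r_1$ and $r_2$ to be of a single type in $\{\alpha,\beta\}$, at which point we reduce directly to one of the earlier lemmas.
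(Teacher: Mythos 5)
Your proof is correct and follows essentially the same route as the paper's: both arguments use \lref{lem:3gammarows} to limit the type-$\gamma$ rows, the incompatibility of types $\alpha$ and $\beta$ modulo $6$, and then reduce to \lref{lem:4alpharows} or \lref{lem:4betarows}. The only cosmetic difference is that the paper isolates a single row $r'$ having none of the types $\alpha$, $\beta$, $\gamma$ and balances it against a non-$\gamma$ row, whereas you show all four rows share a common type in $\{\alpha,\beta\}$ and invoke the four-row lemma directly.
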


\begin{proof}
  By \lref{lem:3gammarows} we may assume that our four rows are
  $r,r_1,r_2,r_3$ where $r$ does not have type $\gamma$.  Now, either
  $n\not\equiv2\pmod6$ and none of $r,r_1,r_2,r_3$ has type $\alpha$, or
  $n\not\equiv4\pmod6$ and none of $r,r_1,r_2,r_3$ has type $\beta$.  Hence,
  by \lref{lem:4alpharows} and \lref{lem:4betarows}, we may assume
  that there is $r'\in\{r,r_1,r_2,r_3\}$ such that $r'$ does not have type
  $\alpha$ or $\beta$. If $r'\ne r$ then the pair $\{r,r'\}$ is
  balanceable by \lref{lem:exceptions}, since it is not of type
  $\alpha$, $\beta$ or $\gamma$.  For the same reason, if $r'=r$ then
  $\{r,r_i\}$ is balanceable for each $i\in\{1,2,3\}$.
\end{proof}

We will also require a result about balancing particular sets of four
rows.  To prove this result, we need a refined version of
\lref{lem:unbalpairsconfig} in very special cases, as in the lemma
below.  For the lemma and the remainder of the section, we use the
following definition.  Let $r_1,r_2$ be rows of type $T^*_{i_1}$ and
$T^*_{i_2}$ with respect to $r$, respectively.  We say the legitimate
swaps in $r_1$ and $r_2$ are disjoint if the columns involved in each
swap are disjoint.

\begin{lemma}\label{lem:disjswapsTs}
Let $r$, $r_1$ and $r_2$ be the rows of an $S_{1,j}^*$ for some
$j\in\{1,2\}$.  Suppose that $r_1$ and $r_2$ both have legitimate
swaps that are disjoint.  Then the entries of $A=A(r_1,r_2)$ satisfy
\begin{itemize}
\item $a_{33} = 8y$  and $a_{13}=0= a_{23}$ if $j=1$;  
\item $a_{13}+a_{23} = 8y$, $a_{33}=0$ and $a_{13},a_{23} \geq 4y-2$ if $j=2$.
\end{itemize}
\end{lemma}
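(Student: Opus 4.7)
The approach is to apply \lref{lem:unbalpairsconfig} to decompose $A(r_1,r_2) = A_0 + B$, where $A_0$ has the explicit form given there for the relevant $j \in \{1,2\}$ and $B$ is an admissible matrix whose entries sum to $8$. In the proof of \lref{lem:unbalpairsconfig}, $B$ equals $A_C(r_1,r_2)$ for an $8$-column set $C$ that contains all columns involved in the legitimate swaps of $r_1$ and $r_2$ and otherwise has two columns of each of the four $r$-values. Because our legitimate swaps are disjoint, $C$ partitions into $4$ swap columns together with $4$ remaining ``filler'' columns in which both $r_1$ and $r_2$ still follow their underlying $T_i$-patterns.

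The central structural observation is that, by \lref{lem:legswap}, a legitimate swap in $r_1$ alters only the second entries of two cells and leaves $r_2$ untouched in those columns. Hence in each swap column of $r_1$, the entry of $r_2$ still obeys its $T_j$-pattern with respect to $r$, which forces $r_2$'s value there to lie in $\{(0,1),(1,0)\}$; such a column therefore contributes only to cells of $B$ whose column index is $1$ or $2$. By symmetry, each swap column of $r_2$ contributes only to cells of $B$ whose row index is $1$ or $2$. In particular, none of the four swap columns contributes to $a_{33}$, $a_{13}$, or $a_{23}$ of $B$, so these three entries of $B$ are determined entirely by the four filler columns in $C$.

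For $j = 1$, both rows follow the $T_1$-pattern in every filler column, so $r_1[c] = r_2[c]$ and the column contributes to a diagonal cell of $B$. A direct inspection of $T_1$ against the four $r$-blocks shows that all four filler contributions land in $a_{33}$, yielding $b_{33} = 4$ and $b_{13} = b_{23} = 0$; combined with $A_0$ this gives $a_{33} = 8y$ and $a_{13} = a_{23} = 0$. For $j = 2$, the rows in each filler column follow different patterns ($T_1$ and $T_2$), and these patterns disagree by construction: one row lies in $\{(0,0),(1,1)\}$ while the other lies in $\{(0,1),(1,0)\}$. Every filler contribution therefore feeds either $b_{13}$ or $b_{23}$, giving $b_{33} = 0$ and $b_{13} + b_{23} = 4$; combining with the $4y-2$ already in each of $a_{13}$ and $a_{23}$ from $A_0$ produces the stated conclusions $a_{13} + a_{23} = 8y$, $a_{33} = 0$, and $a_{13},a_{23} \ge 4y-2$.

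The main obstacle will be handling the filler-column analysis cleanly, since it requires a precise case-check using the explicit $T_i$-patterns relative to $r$ described in \fref{figgy2}, together with the ``disjointness'' hypothesis to guarantee that the $4$ swap columns and $4$ filler columns really do give $|C| = 8$. Admissibility of $B$ then plays the auxiliary role of confirming that the residual entries (outside the specific cells we track) must vanish or agree in pairs.
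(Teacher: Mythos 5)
Your high-level plan (decompose $A(r_1,r_2)$ via \lref{lem:unbalpairsconfig} and then track, column by column, which cell of $A$ each swap column and each remaining column feeds) is essentially the strategy of the paper, which simply works with the set of $4$ swap columns directly rather than the $8$-column set $C$. However, your central structural step rests on a misreading of how $A'(r_1,r_2)$ is indexed. You treat the row index of $A$ as recording the ordered pair sitting in row $r_1$ and the column index as recording the pair sitting in row $r_2$. The worked example in \sref{ss:eg} shows that the actual convention is different: the row index of $A'$ records the pattern $(F_1[r_1,c],F_1[r_2,c])$ and the column index records $(F_2[r_1,c],F_2[r_2,c])$ (for instance, columns $1$ and $2$ there have $r_1$-entry $(0,0)$ yet are counted by $a'_{11}$). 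Under the correct convention the column index of $A$ equals $3$ precisely when $F_2[r_1,c]=F_2[r_2,c]$, and this is what makes a swap column (where exactly one of the two $F_2$-entries has been flipped away from the common $T$-pattern) leave the third column of $A$.

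Because of this, several of your intermediate claims are false as stated, and your ``direct inspections'', if carried out within your own framework, would not return the answers you quote. For $j=1$ a swap column of $r_2$ contributes to $a_{31}$ or $a_{32}$ (the $F_1$-entries of $r_1$ and $r_2$ agree in \emph{every} column, since legitimate swaps only touch second entries), not to a cell with row index $1$ or $2$; the entry of $r_2$ in a swap column of $r_1$ need not lie in $\{(0,1),(1,0)\}$ --- it is whatever the $T_j$-pattern dictates in that $r$-block; and for $j=2$ your four ``filler'' columns, read with your indexing, would land in $a_{31},a_{32},a_{13},a_{23}$ rather than all in $a_{13},a_{23}$. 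The conclusions you state are all correct, but they are not delivered by the reasoning given. The repair is short once the indexing is fixed: for $j\in\{1,2\}$ the $F_2$-rows of $r_1$ and $r_2$ agree except in the $4$ (disjoint) swap columns, so exactly $8y$ columns have $A$-column-index $3$; the $F_1$-rows of $r_1$ and $r_2$ are identical when $j=1$ and complementary when $j=2$, which forces those $8y$ columns into $a_{33}$, respectively into $a_{13}+a_{23}$; and the bound $a_{13},a_{23}\ge 4y-2$ follows because only $4$ columns are displaced from the configuration $a_{13}=a_{23}=4y+2$ of the unswapped pair.
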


\begin{proof}
Let $S$ be the $S_{1,j}^*$ with rows
$r,r_1,r_2$ and let $r_1'$ and $r_2'$ be second and third rows of $S_{1,j}$, respectively.
By considering
\fref{figgy2}, it is easy to check that $A(r_1',r_2')$ is
\[
\begin{array}{|c|c|c|}
\hline
 0  & 0 &  0  \\
\hline
 0  &0  & 0 \\
\hline
 0 & 0 & 8y+4 \\
\hline
\end{array}
\quad \text{and} \quad
\begin{array}{|c|c|c|}
\hline
 0  & 0 & 4y+2  \\
\hline
 0  &0  & 4y+2 \\
\hline
 0 & 0 & 0 \\
\hline
 \end{array}
\]
when $j=1$, and $j=2$, respectively.  By \lref{lem:legswap}, rows
$r_1$ and $r_2$ differ from, respectively, $r_1'$ and $r_2'$ by a
legitimate swap.
By assumption, these legitimate swaps are disjoint, so
$S$ and $S_{1,j}$ differ by $4$ cells located in different columns; call
this set of $4$ columns $C$ and let $C' = N(n)\setminus C$.
Clearly, $A_{C}(r_1,r_2)$ has entry sum $4$ and $A_{C'}(r_1,r_2)$ has
entry sum $8y$ with all non-zero cells occurring in the last column.
In each column in $C$ the element in $S$ in exactly one of the rows
$r_1$ and $r_2$ has a different second entry to the element in row
$r$, by \lref{lem:legswap}.  It follows that $A_{C}(r_1,r_2)$ has
$0$'s in the last column.  As
$A=A(r_1,r_2)=A_{C}(r_1,r_2)+A_{C'}(r_1,r_2)$, the last column of
$A(r_1,r_2)$ has entry sum $8y$, which, along with the configurations
above, imply that $a_{33} = 8y$ and $a_{13}=0=a_{23}$ when $j=1$ and
$a_{13}+a_{23}=8y$ when $j=2$.  Finally, as $S$ and $S_{1,j}$ differ
in exactly $4$ cells, neither $a_{13}$ nor $a_{23}$ can be less than
$4y-2$ when $j=2$, and the result follows.
\end{proof}

Finally, we also require a result about three rows that do not balance with a given row of type $\gamma_1$.
\begin{lemma}\label{lem:balpart4rows}
Let $r$, $r_1$, $r_2$ and $r_3$ be rows of $F_1 \oplus F_2$ such that
$r$ is of type $\gamma_1$ and $\{r,r_j\}$ has type $T_{i_j}$ or
$T^*_{i_j}$ for $j\in\{1,2,3\}$, where $i_1$, $i_2$, $i_3\in \{1,2,3,4\}$. If either 
\begin{itemize} 
\item[{\rm (i)}] $\{i_1,i_2,i_3\}\cap\{1,2\}\ne\emptyset$
and $\{i_1,i_2,i_3\}\cap\{3,4\}\ne\emptyset$; or 
\item[{\rm (ii)}] $\{r,r_j\}$ has type $T^*_{i_j}$ for $j\in\{1,2,3\}$
such that the legitimate swaps in two of $r_1,r_2,r_3$ are disjoint
\end{itemize}
then $\{ r,r_1,r_2,r_3\}$ is balanceable.
\end{lemma}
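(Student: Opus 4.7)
The plan is to partition $\{r,r_1,r_2,r_3\}$ into two pairs and apply \lref{lem:balsets} by finding integers $p,q$ for which both pairs are $(p,q)$-balanceable. The pair $\{r,r_j\}$ will be controlled via \lref{lem:nearbalE5E6}, and the pair not containing $r$ via \lref{lem:unbalpairsconfig} or \lref{lem:disjswapsTs}, feeding into \lref{lem:balanz}.

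For case (i), by the $F_1\leftrightarrow F_2$ symmetry (which swaps the families $\{T_1,T_2\}$ and $\{T_3,T_4\}$) I may assume, after relabelling, that $i_1,i_2\in\{1,2\}$ and $i_3\in\{3,4\}$. I will use the pairing $\{\{r,r_1\},\{r_2,r_3\}\}$. By \lref{lem:nearbalE5E6}, $\{r,r_1\}$ is $(1,0)$-balanceable. Since $r_2$ is in the first family and $r_3$ in the second family relative to $r$, the triple $(r,r_2,r_3)$ is equivalent to the three rows of $S_{1,3}$ or some $S^*_{1,3}$, so \lref{lem:unbalpairsconfig} with $j=3$ yields $A(r_2,r_3)=A+B$ in which $A$ has entries $2y-1$ in positions $(1,3),(2,1),(3,2),(3,3)$ and $B$ is admissible with entry-sum $8$. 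I will then exhibit via \lref{lem:balanz} an explicit $M$ certifying that $A(r_2,r_3)$ is $(1,0)$-balanceable; for example, $m_{13}=y+1$, $m_{21}=m_{32}=y$, $m_{33}=y+1$ (with any remaining corrections absorbed into entries contributed by $B$). \lref{lem:balsets} will then conclude case (i).

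For case (ii), if condition (i) also holds we reduce to the previous argument, so I may assume all three $i_j$ lie in the same family, which I take to be $\{1,2\}$. Let $r_1,r_2$ be the two rows whose legitimate swaps with respect to $r$ are disjoint. I will use the pairing $\{\{r,r_3\},\{r_1,r_2\}\}$. Since $\{r,r_3\}$ is of starred type, \lref{lem:nearbalE5E6} makes it $(p,q)$-balanceable for every $(p,q)\in\{(0,1),(1,0),(1,1),(1,-1)\}$. The triple $(r,r_1,r_2)$ forms an $S^*_{1,j}$ with $j\in\{1,2\}$, so \lref{lem:disjswapsTs} pins down the last column of $A(r_1,r_2)$: either $a_{33}=8y$ with $a_{13}=a_{23}=0$ (if $j=1$), or $a_{33}=0$ with $a_{13}+a_{23}=8y$ and $a_{13},a_{23}\ge 4y-2$ (if $j=2$). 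The admissibility conditions from \lref{sumz} then force the six remaining entries of $A(r_1,r_2)$ to sum to $4$, and a straightforward case check produces, in each configuration of these subdominant entries, a matrix $M$ with its bulk placed on the large entries and a single unit located in a positive subdominant cell, realising one of the four target $(p,q)$ values. \lref{lem:balsets} then closes the argument.

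The main obstacle is verifying the coordinate bounds $0\le m_{ij}\le a_{ij}$ across all admissible distributions of the subdominant entries. In case (i) the deficit relative to $A$'s fixed entries has to be compensated by choices within $B$; in case (ii) one must check that however the $4$ units of subdominant mass are distributed among the six available positions, at least one of the four candidate $(p,q)$ values is realisable. For small parameters (notably $y=1$, i.e.\ $n=12$) the bounds tighten and a handful of configurations may require direct verification.
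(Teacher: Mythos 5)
Your overall strategy is the same as the paper's: normalise by equivalence, split the four rows into a pair containing $r$ (handled by \lref{lem:nearbalE5E6}) and a pair not containing $r$ (handled by \lref{lem:unbalpairsconfig} or \lref{lem:disjswapsTs} fed into \lref{lem:balanz}), and combine with \lref{lem:balsets}. For $y\ge2$ your explicit matrix in case (i) does work, because the base configuration from \lref{lem:unbalpairsconfig} already has $2y-1\ge y+1$ in each of the four positions you use. The problem is that the lemma must also cover $y=1$, i.e.\ $n=12$, and there your argument has a genuine hole rather than a routine loose end. Your $M$ requires $a_{13}\ge y+1$ and $a_{33}\ge y+1$, which the base configuration supplies only when $y\ge2$; for $n=12$ you would need a $(1,0)$-balancing matrix for $A(r_2,r_3)=A+B$ that works \emph{uniformly over all admissible residuals $B$ of entry sum $8$}, and no single $M$ does. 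This is not "a handful of configurations": the residual is not unique, and organising the check is exactly where the content lies. The paper's resolution is a different idea, not a brute-force check: it peels off a matrix with a single $1$ in each of the four nonzero positions, leaving an admissible residual $B'$ of entry sum $8y$, applies the classification of \lref{lem:exceptions} to $B'$, and in the exceptional case (which forces $n=12$ and $B'$ equivalent to $E_1$ or $E_2$, ruled in or out via \Cref{goodcases}) falls back on the $(1,0)$- and $(0,-1)$-balancings supplied by \lref{lem:nearbalE1E2}. Some argument of this kind, or an explicit enumeration, is needed to make your case (i) complete.

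The same caveat applies to your case (ii) with $j=2$. There \lref{lem:disjswapsTs} only guarantees $a_{13},a_{23}\ge 4y-2$, and when $y=1$ one of these can equal $2<2y+1$, so the "bulk on the large entries plus one unit in a positive subdominant cell" recipe can violate the bound $m_{i3}\le a_{i3}$; the paper needs a separate sub-argument here, using that $a_{13}=4y-2$ forces $a_{23}=4y+2$ and hence one of $a_{11},a_{12}$ to be positive. Your $j=1$ sub-case is fine as described (the first column of the residual sums to $2$, so some entry there is positive, and $\{r,r_3\}$, being starred, matches any of the resulting $(p,q)$). In short: the decomposition and the lemmas invoked are the right ones and match the paper, but the small-parameter cases $n=12$ are asserted rather than proved, and they are precisely the cases that require an extra idea.
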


\begin{proof}
By equivalence, it suffices to consider the case when $i_1=1$ and
either $i_2=3$ or $i_2,i_3 \in \{1,2\}$ and the rows satisfy (ii).  First suppose that $i_2=3$.
By \lref{lem:unbalpairsconfig}, $A(r_1,r_2)$ is
\[
\begin{array}{|c|c|c|}
\hline
 0  & 0 &  2y-1  \\
\hline
 2y-1  &0  & 0 \\
\hline
 0 & 2y-1 & 2y-1 \\
\hline
\end{array}
+B
\]
where $B$ is an admissible matrix with entry sum $8$.
As $y \geq 1$, we can write $A(r_1,r_2)$ as 
\[
\begin{array}{|c|c|c|}
\hline
 0  & 0 &  1  \\
\hline
 1  &0  & 0 \\
\hline
 0 & 1 & 1 \\
\hline
\end{array}+B'
\]
where $B'$ is some admissible matrix with entry sum $n-4=8y$.  If $B'$
is not an exceptional configuration in \lref{lem:exceptions}, then 
there exists a matrix $B''$ which $(0,0)$-balances $B'$.
The 
following then show that $A(r_1,r_2)$ is $(0,1)$-balanceable and
$(1,0)$-balanceable, respectively, by \lref{lem:balanz}:
\[
\begin{array}{|c|c|c|}
\hline
 0  & 0 &  1  \\
\hline
 1  &0  & 0 \\
\hline
 0 & 0 & 0 \\
\hline
\end{array} +B''
\quad \text{and}\quad 
\begin{array}{|c|c|c|}
\hline
 0  & 0 &  1  \\
\hline
 0  &0  & 0 \\
\hline
 0 & 0 & 1 \\
\hline
\end{array}+B''. 
\]
  As $A(r,r_3)$ is equivalent to
$E_5$ or $E_6$, \lref{lem:nearbalE5E6} implies that $A(r,r_3)$ is 
either $(0,1)$-balanceable or $(1,0)$-balanceable. Therefore,
$\{r,r_1,r_2,r_3\}$ is balanceable, by \lref{lem:balsets}.  If
$n\geq20$, then $b'_{33} \geq 2y-2 \geq 2$, so $B'$ cannot be an
exceptional configuration in \lref{lem:exceptions}, by \Cref{goodcases}.  Thus, if $B'$ is
an exceptional configuration in \lref{lem:exceptions}, then $n=12$ and
$B'$ must be equivalent to $E_1$ or $E_2$, as $B'$ has entry sum $8$.
By \lref{lem:nearbalE1E2}, there are matrices $B''$ and $B'''$ that
$(1,0)$-balance and $(0,-1)$-balance $B'$, respectively.  Thus 
\[
\begin{array}{|c|c|c|}
\hline
 0  & 0 &  0  \\
\hline
 1  &0  & 0 \\
\hline
 0 & 0 & 1 \\
\hline
\end{array}+B''
\quad \text{and} \quad 
\begin{array}{|c|c|c|}
\hline
 0  & 0 &  0  \\
\hline
 1  &0  & 0 \\
\hline
 0 & 0 & 1 \\
\hline
\end{array}+B'''
\]
$(0,1)$-balance and $(-1,0)$-balance $A(r_1,r_2)$, respectively.  As
before, \lref{lem:nearbalE5E6} implies $A(r,r_3)$ has to be either
$(0,1)$-balanceable or $(1,0)$-balanceable, so $\{r,r_1,r_2,r_3\}$ is
balanceable by \lref{lem:balsets}.

Now suppose that $i_2,i_3 \in \{1,2\}$ and condition (ii) is satisfied. Without loss of generality,
let rows $r_1$ and $r_2$ have disjoint legitimate swaps.  First
suppose that $i_2=1$. Then by \lref{lem:disjswapsTs}, $A(r_1,r_2)$ is
of the form
\[
\begin{array}{|c|c|c|}
\hline
 0  & 0 &  0  \\
\hline
 0  &0  & 0 \\
\hline
 0 & 0 & 8y \\
\hline
\end{array}+B
\]
for some admissible matrix $B$, with entry sum $4$ and $0$'s in the last column.
In particular, $B$ has at least one non-zero entry in the first column.
Thus at least one of 
\[
\begin{array}{|c|c|c|}
\hline
 1  & 0 &  0  \\
\hline
 0  &0  & 0 \\
\hline
 0 & 0 & 4y +1\\
\hline
\end{array}
\quad \text{ or } \quad 
\begin{array}{|c|c|c|}
\hline
 0  & 0 &  0  \\
\hline
 1  &0  & 0 \\
\hline
 0 & 0 & 4y+1 \\
\hline
\end{array}
\quad \text{ or } \quad 
\begin{array}{|c|c|c|}
\hline
 0  & 0 &  0  \\
\hline
 0  &0  & 0 \\
\hline
 1 & 0 & 4y+1 \\
\hline
\end{array}
\]
$(p,q)$-balances $A(r_1,r_2)$, where $(p,q)$ is $(1,1)$, $(-1,1)$ or
$(0,1)$, respectively.  As $A(r,r_3)$ is equivalent to $E_6$,
\lref{lem:nearbalE5E6} implies that $A(r,r_3)$ is $(1,1)$-balanceable,
$(-1,1)$-balanceable and $(0,1)$-balanceable.  Hence,
\lref{lem:balsets} implies that $\{r,r_1,r_2,r_3\}$ is balanceable.

Finally, suppose that $i_2=2$. Then, by \lref{lem:disjswapsTs},
$A=A(r_1,r_2)$ satisfies $a_{13}+a_{23}=8y$, $a_{33}=0$ and
$a_{13},a_{23} \geq 4y-2$.  In particular, the first two columns of
$A$ each have at least one non-zero entry.  So if $a_{13}$ and
$a_{23}$ are both at least $2y+1$, then at least one of the following
matrices $(p,q)$-balances $A$ for some $(p,q)\in \{(0,1),(1,1)\}$:
\[
\begin{array}{|c|c|c|}
\hline
 1  & 0 &  2y  \\
\hline
 0  &0  & 2y+1 \\
\hline
 0 & 0 & 0 \\
\hline
\end{array}
\quad \text{and} \quad
 \begin{array}{|c|c|c|}
\hline
 0  & 0 &  2y+1  \\
\hline
 1  &0  & 2y\\
\hline
 0 & 0 & 0 \\
\hline
\end{array}
\quad \text{and} \quad
 \begin{array}{|c|c|c|}
\hline
 0  & 0 &  2y+1  \\
\hline
 0  &0  & 2y \\
\hline
 1 & 0 & 0 \\
\hline
\end{array}\,.
\]

Otherwise, as $a_{13},a_{23} \geq 4y-2$ and $y \geq 1$, one of $a_{13}$ and
$a_{23}$ is less than $2y+1$ only if $y=1$ and $a_{13}$ or $a_{23}$ is
$4y-2$.  If $a_{13} = 4y-2$, then $a_{23}=4y+2$ and so one of $a_{11}$
and $a_{12}$ is non-zero. It follows that at least one of the
following $(0,\pm1)$-balances $A$ 
\[
 \begin{array}{|c|c|c|}
\hline
 1  & 0 &  2y  \\
\hline
 0  &0  & 2y+1\\
\hline
 0 & 0 & 0 \\
\hline
\end{array}
\quad \text{and} \quad
 \begin{array}{|c|c|c|}
\hline
 0  & 1 &  2y  \\
\hline
 0  &0  & 2y+1 \\
\hline
 0 & 0 & 0 \\
\hline
\end{array}
\]
and so $A$ is $(0,1)$-balanceable.  Similarly, if $a_{23}=4y-2$, then
$A$ is $(0,1)$-balanceable.  In all cases $A$ is 
$(0,1)$-balanceable or $(1,1)$-balanceable.  As $A(r,r_3)$ has type
$T^*_{i_3}$, it is $(0,1)$-balanceable and $(1,1)$-balanceable,
by \lref{lem:nearbalE5E6}. Hence, $\{r,r_1,r_2,r_3\}$ is balanceable,
by \lref{lem:balsets}.
\end{proof}

We end this subsection with two results that limit the number of rows
that $F_1 \oplus F_2$ can have of a particular type. In both we will need
to use that $F_1$ is orthogonal to $F_2$, an assumption that we have not
needed until now.

\begin{lemma}\label{dubya}
Let $F_1$ and $F_2$ be orthogonal and $n>8$. Also let $w_{\alpha}(i)$
be the number of type $\alpha_i$ rows and $w_{\beta}(i)$ the number of
type $\beta$ rows in $F_1 \oplus F_2$ for each $i\in \{1,2\}$. Then
$$\max\{w_{\alpha}(1),
w_{\alpha}(2),w_{\beta}(1),w_{\beta}(2)\} \leq \frac{3n^2}{4(n-2)}< 3n/4+2.$$
\end{lemma}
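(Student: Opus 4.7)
The key fact that makes this lemma work is the orthogonality of $F_1$ and $F_2$, which forces each of $(0,0),(0,1),(1,0),(1,1)$ to occur exactly $(n/2)^2=n^2/4$ times in $F_1\oplus F_2$. Summing row by row, this yields $\sum_r \zz(r)=n^2/4$, and by \lref{ezy}, also $\sum_r (m-\zz(r))=n^2/4$, since $m-\zz(r)$ equals the number of $(0,1)$ pairs in row $r$.

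My plan is to combine these identities with the lower bounds on $\zz(r)$ implicit in the definitions of the four types, as recorded in \lref{lem:num00alpha} and \lref{lem:num00beta}. A row of type $\alpha_1$ can only exist when $n\equiv 2\pmod 6$; then $x=(n-2)/6$ and $\zz(r)\ge 2x=(n-2)/3$. A row of type $\beta_1$ can only exist when $n\equiv 4\pmod 6$; then $x=(n-4)/6$ and $\zz(r)\ge 2x+1=(n-1)/3$. For type $\alpha_2$ rows the bound $\zz(r)\le x+1$ translates, via $m=n/2$ and $n\equiv 2\pmod 6$, into $m-\zz(r)\ge m-x-1=2x=(n-2)/3$; and for type $\beta_2$ rows one similarly gets $m-\zz(r)\ge m-x-1=2x+1=(n-1)/3$.

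Summing these pointwise bounds over the relevant rows and comparing with $\sum_r \zz(r)=n^2/4$ in the $\alpha_1,\beta_1$ cases (and with $\sum_r(m-\zz(r))=n^2/4$ in the $\alpha_2,\beta_2$ cases) gives $w_\alpha(1)\cdot(n-2)/3\le n^2/4$ together with three analogous inequalities. Each rearranges to a bound of the form $w\le \tfrac{3n^2}{4(n-2)}$, or, in the $\beta$ cases, the strictly smaller $\tfrac{3n^2}{4(n-1)}$, establishing the first inequality. The second inequality $\tfrac{3n^2}{4(n-2)}<\tfrac{3n}{4}+2$ is purely arithmetic: clearing denominators turns it into $2n-16>0$, which holds because $n>8$. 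There is no substantial obstacle in the argument; the only subtlety is remembering to count $(0,1)$-pairs in place of $(0,0)$-pairs for the $\alpha_2$ and $\beta_2$ rows (which is where orthogonality is invoked a second time via \lref{ezy}).
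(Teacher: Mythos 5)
Your proposal is correct and follows essentially the same argument as the paper: each row of the given type contributes at least $(n-2)/3$ (or $(n-1)/3$) occurrences of $(0,0)$ (or of $(0,1)$ for the $\alpha_2,\beta_2$ cases), and orthogonality caps the total at $n^2/4$. The paper's proof only writes out the $\alpha_1$ case and declares the rest similar, so your explicit treatment of the remaining cases, including the switch to counting $(0,1)$-pairs, is just a fuller version of the same idea.
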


\begin{proof}
Suppose that $w_{\alpha}(1)>\frac{3n^2}{4(n-2)}$. Then by
\lref{lem:num00alpha}, there are more than
$\frac{3n^2}{4(n-2)}\times\frac{n-2}{3}= \frac{n^2}{4}$ occurrences of
$(0,0)$ in $F_1 \oplus F_2$, contradicting the assumption that $F_1$
and $F_2$ are orthogonal.  The proofs for the other cases are similar.
\end{proof}

\begin{lemma}\label{lem:numofsimtype}
Let $F_1$ and $F_2$ be orthogonal and $r$ be a row of type $\gamma_1$
in $F_1\oplus F_2$. Then there are at most $\frac{n}{2}+1$ rows $r'$
such that $A(r,r')$ has a type from the set $\{T_1, T^*_1, T_2,
T^*_2\}$.  Furthermore, if there are exactly $\frac{n}{2}+1$ such
rows, then at least $\frac{n}{2}$ of them must be types $T^*_1$ or
$T^*_2$ and every column has at least one row with a legitimate swap
in that column. In particular, there are at least two rows with
legitimate swaps in disjoint pairs of columns. All of the above statements
hold with $T_1, T^*_1, T_2, T^*_2$ replaced respectively by
$T_3, T^*_3, T_4, T^*_4$.
\end{lemma}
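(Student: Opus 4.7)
The plan is to obtain every conclusion via a single double-counting of the $1$-entries of $F_2$ located in the $n/2$ columns where row $r$ of $F_2$ is $0$. Write $W$ for the set of rows $r'$ in the lemma statement and partition $W=W_1\sqcup W_2$ according to whether $A(r,r')$ has type in $\{T_1,T_2\}$ or in $\{T^*_1,T^*_2\}$; let $Z=\{c\in N(n):F_2[r,c]=0\}$, so $|Z|=n/2$. Inspection of \fref{figgy2} shows that for any $r'\in W_1$ the $F_2$-row of $r'$ is the complement of the $F_2$-row of $r$ (hence has $n/2$ ones in $Z$), while \lref{lem:legswap} shows that for $r'\in W_2$ the $F_2$-row of $r'$ differs from that complement in exactly two columns: a column $c_1(r')\in Z$ where a $1$ becomes a $0$, and a column $c_2(r')\in N(n)\setminus Z$ where a $0$ becomes a $1$. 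In particular, a $W_2$-row contains exactly $n/2-1$ ones in $Z$.

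Since every column of $F_2$ has $n/2$ ones, summing the $1$-entries of $F_2$ located in the columns of $Z$ yields $n^2/4$; row $r$ contributes $0$, so
\[
\tfrac{n}{2}|W_1|+\left(\tfrac{n}{2}-1\right)|W_2|\ \leq\ \tfrac{n^2}{4},
\]
which rearranges to $|W|\leq n/2+(2/n)|W_2|$. Combining with $|W_2|\leq|W|$ gives $|W|\leq n^2/(2(n-2))$; since type $\gamma$ rows require $n\equiv 4\pmod 8$ and we have $n>8$, so $n\geq 12$, the right-hand side is strictly less than $n/2+2$ and integrality forces $|W|\leq n/2+1$. If $|W|=n/2+1$ the displayed inequality gives $|W_2|\geq n/2$, hence $|W_1|\leq 1$, so at least $n/2$ rows of $W$ have type $T^*_1$ or $T^*_2$.

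For the remaining claims assume $|W|=n/2+1$ and use the column-sum identity $\sum_{r''}F_2[r'',c]=n/2$ for each $c$. For $c\in Z$, writing $s_1(c)=|\{r'\in W_2:c_1(r')=c\}|$, the left-hand side decomposes as $|W_1|+(|W_2|-s_1(c))+(\text{contribution from rows outside }\{r\}\cup W)$, forcing $s_1(c)\geq|W|-n/2=1$. For $c\in N(n)\setminus Z$, writing $s_2(c)=|\{r'\in W_2:c_2(r')=c\}|$, the identity becomes $1+s_2(c)+(\text{other})=n/2$; since only $n-1-|W|=n/2-2$ rows lie outside $\{r\}\cup W$, the `other' term is at most $n/2-2$, giving $s_2(c)\geq 1$. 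Thus every column hosts a legitimate swap. To get two disjoint swap-pairs, model the swaps as a bipartite multigraph $G$ on $Z\sqcup(N(n)\setminus Z)$ with edges $\{c_1(r'),c_2(r')\}$ for $r'\in W_2$: $G$ has at least $n/2$ edges and minimum degree at least $1$. If no two edges are vertex-disjoint then, since $G$ is bipartite and hence contains no triangle, the edge set must form a star centred at some vertex $v$, leaving every other vertex on the same side of the bipartition as $v$ isolated; this contradicts $n/2\geq 6$. The parallel statement for $\{T_3,T^*_3,T_4,T^*_4\}$ follows by the identical argument applied to $F_1$ in place of $F_2$, since those types force the $F_1$-row of $r'$ to (nearly) coincide with the complement of the $F_1$-row of $r$. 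The only subtle step is pinning down the $F_2$-structure of $W_2$-rows via \lref{lem:legswap}; once that is done, every other step is elementary column-sum bookkeeping.
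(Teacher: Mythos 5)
Your proof is correct and follows essentially the same strategy as the paper's: it exploits the $F_2$-complementarity of rows of type $T_1,T_2$ with $r$, the fact that a $T^*$-row differs from this complement by a single legitimate swap, and then counts occurrences of symbols of $F_2$ in the columns determined by $r$ against the frequency-square column constraint to get the bound $n^2/(2(n-2))$ and the covering/disjointness conclusions. The only differences are cosmetic bookkeeping (you count ones of $F_2$ over the single set $Z$ rather than over the four classes $C_{ab}$ with a pigeonhole on $t_{\min}$, and you make the final disjoint-swaps step explicit via a bipartite star argument), so no changes are needed.
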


\begin{proof}
We only prove the claim about $T_1, T^*_1, T_2, T^*_2$, as the other case is
equivalent.  Let $S$ be the submatrix of $F_1 \oplus F_2$ with $1+t$
rows consisting of $r$ and all other rows that are of types
$T_1,T_2,T^*_1$ or $T^*_2$ with respect to $r$.
Let $S'$ be the $(t+1)\times n$
matrix formed from $S$ by replacing each row $r'$ of type $T^*_1$ or
$T^*_2$ with respect to $r$ with the row $r''$ of type $T_1$ or $T_2$
with respect to $r$, respectively. For each $(a,b) \in \{0,1\}^2$, let
$C_{ab}$ be the submatrix of $S$ with all the rows of $S$, and the
columns for which $r$ contains $(a,b)$.
Define $C'_{ab}$ from $S'$ in the analogous way, for each $(a,b)\in\{0,1\}^2$.

By \lref{lem:legswap}, each row of $S'$ (except the first row)
can be formed from the
corresponding row of $S$ by performing a single legitimate swap.
Let $t_{ab}$ be the number of elements in $C'_{ab}$ that differ from
those in $C_{ab}$. We then have that $t_{00}+t_{01}+t_{10}+t_{11} \leq 2t$.
Let $t_{\min}= \min\{t_{00},t_{01},t_{10},t_{11}\}$. 
By the pigeonhole principle, $t_{\min} \leq t/2$. 
Recall that if $A(r,r')$ is of type $T_1$ or $T_2$, then every column
has a different entry in rows $r$ and $r'$ of $F_2$.  So, the columns
of $C_{a(1-b)}$ contain at least $tn/4-t_{a(1-b)}$ occurrences of
the symbol $b$ in $F_2$.  As each $C_{ab}$ is a subset of $n/4$
columns of $F_1\oplus F_2$, we must have that $tn/4-t_{ab} \leq n^2/8$, for
each $ab$.  Therefore,
\[
n^2/8 \geq \frac{tn}{4}-t_{\min} \geq \frac{tn}{4}-\frac{t}{2}
\]
and rearranging for $t$ implies that
$t\leq\frac{n^2}{2(n-2)}=\frac{n+2}{2}+\frac{2}{n-2}$.
Since we are assuming that $n\ge8$ and $t$ is an integer, it follows that
$t\le(n+2)/2$,
proving the first claim of the lemma.

Finally, suppose that $t = ({n+2})/{2}$, and hence
$t_{ab} \geq tn/4-n^2/8=n/4$ for each $ab$.
So there are at least $n$ elements in
$S'$ that differ from the corresponding element in $S$.  As each row of
$S'$ differs from the corresponding row in $S$ in 0 or 2 places, there
are at least $n/2$ rows in which $S$ and $S'$ differ.  Also, if there were
a column in which $S$ and $S'$ agreed, then the corresponding column
of $F_2$ would contain at least $({n+2})/{2}$ copies of some symbol,
violating the fact that $F_2$ is a frequency square. Hence there are either
$n/2$ or $n/2+1$ legitimate swaps which between them cover all $n$ columns.
It follows that at least two of them must involve disjoint pairs of columns.
\end{proof}

\subsection{Proof of \tref{t:main}}\label{subsec:proofofthm}

We separate the proof of \tref{t:main} into cases depending on the type of
rows present.  We begin with the case when there is a row not of type
$\gamma$.  Here and for the remainder of the paper, we assume that
$F_1$ and $F_2$ are \emph{orthogonal} frequency square of order $n\geq 8$.

\begin{lemma}\label{lem:nogamcanbalrows} 
Let $n\notin\{8,20\}$ and suppose there exists a row which is not of type
$\gamma$ in $F_1 \oplus F_2$.  Then the rows of $F_1\oplus F_2$ can be
partitioned into balanceable sets. 
\end{lemma}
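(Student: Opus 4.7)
The plan is to partition the rows of $F_1 \oplus F_2$ into balanceable sets of size at most $6$, so that \lref{lem:partofbalsetssuff} finishes the proof of \tref{t:main}.

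First, I would classify each row by type. Let $R_\alpha, R_\beta, R_\gamma$ be the sets of rows of the respective types, and let $R_0 = N(n)\setminus (R_\alpha \cup R_\beta \cup R_\gamma)$. By \lref{lem:exceptions}, every row in $R_0$ balances with every other row of $F_1\oplus F_2$. Because $n\notin\{8,20\}$, \lref{l:not2difftype} makes these four classes disjoint, and since $n\equiv 0\pmod 4$, the residue of $n$ mod $6$ and mod $8$ forces at most one of $R_\alpha, R_\beta$ to be nonempty, while $R_\gamma$ can be nonempty only when $n\equiv 4\pmod 8$. This reduces the argument to case analysis on $n\pmod{24}$, with the hypothesis of the lemma guaranteeing that $R_0\cup R_\alpha\cup R_\beta \neq \emptyset$.

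The easy subcases are $n\equiv 0\pmod{24}$ (handled already by \Cref{cy:mod24}) and $n\equiv 8,16\pmod{24}$, in which $R_\gamma=\emptyset$. For the latter, \lref{lem:4alpharows} (resp.\ \lref{lem:4betarows}) states that the non-balanceable graph restricted to $R_\alpha$ (resp.\ $R_\beta$) contains no $K_4$, and \lref{dubya} bounds $|R_\alpha|$ and $|R_\beta|$ by roughly $3n/4+2$. I would greedily pair balanceable rows and, for each non-balanceable pair of type $\alpha$ or $\beta$ that arises, invoke \Cref{cor:setwise} to absorb it together with one or two additional pairs (drawn from $R_0$ if necessary) into a balanceable quadruple or sextuple; there is enough room in $R_0$ because $|R_\alpha|+|R_\beta|$ is strictly less than $n$. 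The case $n\equiv 12\pmod{24}$ is similar but with $R_\alpha=R_\beta=\emptyset$: \lref{lem:3gammarows} implies the non-balanceable graph on $R_\gamma$ is triangle-free, and \lref{lem:numofsimtype} bounds the degrees, so again most of $R_\gamma$ pairs up; the few stubborn triples in $R_\gamma$ rooted at a type $\gamma_1$ row can be absorbed into balanceable quadruples by \lref{lem:balpart4rows}(i) (when both $T_i$-values with $i\in\{1,2\}$ and with $i\in\{3,4\}$ appear as partners) or \lref{lem:balpart4rows}(ii) (using the disjoint-swap conclusion of \lref{lem:numofsimtype}).

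The hardest cases will be $n\equiv 4\pmod{24}$ and $n\equiv 20\pmod{24}$, where $R_\gamma$ coexists with $R_\beta$ or $R_\alpha$, so the argument must process both families simultaneously. My plan is to first deal with $R_\gamma$ by the $n\equiv 12$ procedure, then pair up the remaining non-$\gamma$ rows via the $n\equiv 8$ or $n\equiv 16$ procedure, drawing absorbing rows from $R_0$. The main obstacle is showing that the two stages do not compete for the same vertices of $R_0$: one must confirm that after reserving enough $R_0$ rows to complete any quadruples forced by \lref{lem:balpart4rows}, there are still enough $R_0$ rows left to absorb the non-balanceable pairs in $R_\alpha$ or $R_\beta$ via \Cref{cor:setwise}. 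The bounds provided by \lref{dubya} and \lref{lem:numofsimtype} should give sufficient slack, because each of $|R_\alpha|, |R_\beta|, |R_\gamma|$ is at most a fixed linear fraction of $n$, with strict inequality when more than one of them is nonempty (a consequence of \lref{lem:easyprops4by4} applied to the $(0,0)$-count across all rows).
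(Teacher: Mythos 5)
Your proposal has the right general shape (classify rows by type, pair greedily, then repair the failures), but it diverges from the paper's argument in ways that leave genuine gaps. The most serious one is your plan to ``absorb'' a non-balanceable pair of type $\beta$ into a balanceable quadruple or sextuple via \Cref{cor:setwise}. That corollary only covers pairs of type $\alpha$ (possibly combined with one pair of type $\gamma$); there is no analogue of \lref{lem:nearbalE1E2} for type $\beta$, i.e.\ no result asserting that a $\beta$ pair is $(p,q)$-balanceable for some useful $(p,q)\neq(0,0)$, so \lref{lem:balsets} gives you nothing to cancel against. The same objection applies to drawing the companion pairs from $R_0$: a pair of $R_0$ rows is $(0,0)$-balanceable, but nothing in the paper says it is $(1,0)$- or $(0,1)$-balanceable, which is what you would need to offset the defect of the bad pair. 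The paper avoids absorption entirely for $\alpha$ and $\beta$: it uses \lref{l:any4rows} to get a greedy pairing with \emph{at most one} non-balanceable pair $\{r,r'\}$, then uses \lref{dubya} to find at least two rows $v,w$ not of the offending subtype, and \emph{re-pairs} ($\{r,v\}$, $\{r',w\}$, etc.), exploiting the fact (from \lref{l:not2difftype}) that two rows of different types always form a balanceable pair. You never invoke the ``at most one bad pair'' reduction, which is what keeps the bookkeeping manageable.

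The second gap is the $\gamma$ case. Your sketch (``most of $R_\gamma$ pairs up; the few stubborn triples \dots can be absorbed'') is not an argument, and it never uses the lemma's hypothesis that a non-$\gamma$ row exists --- which is precisely the lever the paper pulls: if the one remaining bad pair $\{r,r'\}$ has type $\gamma$, then any non-$\gamma$ row $v$ balances with both $r$ and $r'$, and \lref{lem:3gammarows} handles $v$'s partner, so a simple swap of two pairs finishes. The machinery you cite for this case (\lref{lem:numofsimtype}, \lref{lem:balpart4rows}) belongs to the proof of \lref{lem:onlygammarows}, where no non-$\gamma$ row is available and where orthogonality of $F_1$ and $F_2$ is genuinely needed; importing it here both overcomplicates the case and leaves the combinatorial details (how exactly the ``stubborn triples'' are matched without colliding) unverified. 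Finally, your claimed interaction bound (``strict inequality when more than one of $R_\alpha,R_\beta,R_\gamma$ is nonempty'') is asserted but not derived from \lref{lem:easyprops4by4}, and it is not needed in the paper's proof.
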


\begin{proof}
By \lref{l:not2difftype}, no row of $F_1\oplus F_2$ can have
two different types from the set 
$\{\alpha_1,\alpha_2,\beta_1,\beta_2,\gamma_1,\gamma_2\}$. 
In particular, any two rows not of the same type
always form a balanceable pair, unless they are of types $\gamma_1$ and $\gamma_2$.   
By \lref{l:any4rows}, we can partition the rows of $F_1 \oplus F_2$
into pairs ${\mathcal R}$ such that at most one pair is not
balanceable, by greedily selecting pairs of rows that balance.  We are
done unless there is a pair $\{r,r'\} \in {\mathcal R}$ that is not
balanceable.  We proceed by showing there is always a way to re-pair
$r$ and $r'$ so that all pairs in ${\mathcal R}$ are balanceable.

Suppose first that $\{r,r'\}$ is of type $\alpha$ or $\beta$.
We assume that the pair is of type $\alpha_1$; the other cases are similar.
Since $n\geq 12$, \lref{dubya} implies that there
exists at least two rows $v$ and $w$ which are not of type
$\alpha_1$. If $\{v,w\}\in {\mathcal R}$, then we can replace $\{r,r'\}$ and
$\{v,w\}$ with $\{r,v\}$ and $\{r',w\}$, both of which must be balanceable.
Otherwise, $\{v,v'\},\{w,w'\}\in {\mathcal R}$ for some rows $v'$ and $w'$.  If
$\{r,v'\}$ is balanceable, we are done, as we can replace pairs
$\{r,r'\}$ and $\{v,v'\}$ in ${\mathcal R}$ with the balanceable pairs
$\{r,v'\}$ and $\{r',v\}$.  We are similarly done if any of $\{r',v'\}$,
$\{r,w'\}$ or $\{r',w'\}$ are balanceable.  By \lref{l:any4rows},
at least one pair from $\{r,r',v',w'\}$ is balanceable.  So we
are done unless $\{v',w'\}$ is balanceable, in which case we can
replace pairs $\{r,r'\}$, $\{v,v'\}$ and $\{w,w'\}$ in ${\mathcal R}$
with balanceable pairs $\{r,v\}$, $\{r',w\}$ and $\{v',w'\}$.

Finally suppose that rows $r$ and $r'$ are of type $\gamma$.  By assumption,
there is a row $v$ that is not of type~$\gamma$.  Then both $\{r,v\}$ and
$\{r',v\}$ are balanceable.  Let $v'$ be the row such that
$\{v,v'\}\in {\mathcal R}$. If $v'$ is not of type $\gamma$, then
$\{r,v'\}$ and $\{r',v'\}$ are both balanceable. If $v'$ is of type
$\gamma$, then at least one of $\{r,v'\}$ and $\{r',v'\}$ is
balanceable, by \lref{lem:3gammarows}.  In any case, the pairs
$\{r,r'\}$ and $\{v,v'\}$ in ${\mathcal R}$ can be replaced with two
balanceable pairs.  This completes the proof.
\end{proof}

The cases when $n=8$ or $n=20$ are dealt with separately in the
following two lemmas.

\begin{lemma}\label{lem:n=8somealpharow}
If $n=8$, then the rows of $F_1\oplus F_2$ can be partitioned into balanceable
sets.
\end{lemma}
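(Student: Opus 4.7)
The plan is to adapt the case analysis of \lref{lem:nogamcanbalrows} (which required $n\notin\{8,20\}$), with additional work to handle a concentrated configuration peculiar to $n=8$. Since $n=8$ gives $x=y=1$ and $n\equiv 2\pmod 6$ but $n\not\equiv 4\pmod 6$ and $n\not\equiv 4\pmod 8$, \lref{lem:exceptions} forces every non-balanceable pair of rows in $F_1\oplus F_2$ to be of type~$\alpha$ (i.e.\ $A(r,r')$ equivalent to $E_1$ or $E_2$). Hence by \Cref{cor:setwise} any two (respectively three) vertex-disjoint non-balanceable pairs merge into a balanceable $4$-set (respectively $6$-set).

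I would choose a partition $\mathcal{R}$ of the $8$ rows into $4$ pairs that minimizes the number $k$ of non-balanceable pairs. The cases $k\in\{0,2,3,4\}$ are immediate: for $k=0$ we are done; for $k=2$, merge the two non-balanceable pairs into a balanceable $4$-set; for $k=3$, merge the three into a balanceable $6$-set (the remaining pair is already balanceable); and for $k=4$, split the four non-balanceable pairs into two disjoint pairs-of-pairs, each forming a balanceable $4$-set.

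The delicate case is $k=1$. Let $H$ be the graph on the $8$ rows whose edges are the non-balanceable ($\alpha$-)pairs. I would split on whether $H$ contains two vertex-disjoint edges. If yes, say $\{a,b\}$ and $\{p,q\}$, then $\{a,b,p,q\}$ is a balanceable $4$-set, and for the remaining $4$ rows $R$ either $R$ contains an $\alpha$-pair $\{x,y\}$ (making $\{a,b\},\{p,q\},\{x,y\}$ three disjoint $\alpha$-pairs giving a balanceable $6$-set, with the last two rows either balanceable or a fourth $\alpha$-pair that reshuffles into two balanceable $4$-sets) or $R$ has no $\alpha$-pair (so $R$ splits trivially into two balanceable $2$-sets). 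If instead $H$ has no two disjoint edges, then $H$ is a single edge, a triangle, or a star; in each such case, except possibly a star of degree~$7$, one can re-pair to avoid every edge of $H$, contradicting the minimality of $k=1$.

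The main obstacle is then excluding the final possibility: a star of degree $7$ at some vertex $v$, i.e.\ $\{v,r\}$ of type~$\alpha$ for every other row $r$. The plan is to rule this out via the orthogonality identity $\sum_r\zz(r)=(n/2)^2=16$ together with \lref{lem:num00alpha}, case-splitting on $\zz(v)$. The values $\zz(v)\in\{0,4\}$ are incompatible with $v$ lying in any $\alpha$-pair. If $\zz(v)=3$ every pair $\{v,r\}$ is of type $\alpha_1$, forcing $\zz(r)\in\{2,3\}$ for $r\ne v$ and hence $\sum_r\zz(r)\ge 3+7\cdot 2=17$. If $\zz(v)=1$ every pair is $\alpha_2$, forcing $\zz(r)\in\{1,2\}$ and $\sum_r\zz(r)\le 1+7\cdot 2=15$. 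Finally if $\zz(v)=2$, each $r\ne v$ satisfies $\zz(r)\in\{1,3\}$; letting $p$ count the rows with $\zz(r)=3$ gives $3p+(7-p)=14$, i.e.\ $2p=7$, an impossibility. Each sub-case contradicts, completing the plan.
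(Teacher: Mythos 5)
Your proposal is correct and follows essentially the same strategy as the paper's proof: observe that for $n=8$ every non-balanceable pair has type $\alpha$, use Corollary~\ref{cor:setwise} and re-pairing arguments to reduce to the case of a single row forming a non-balanceable pair with all seven others, and then rule that out by counting occurrences of $(0,0)$ against the orthogonality total $\sum_r\zz(r)=16$. The only cosmetic differences are that you organise the reduction via the graph of non-balanceable pairs (matching number one forces a triangle or star) rather than the paper's graph of balanceable pairs, and you enumerate $\zz(v)\in\{1,2,3\}$ directly instead of first normalising to type $\alpha_1$.
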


\begin{proof}
Observe that $F_1\oplus F_2$ has rows of neither type $\beta$ nor $\gamma$, 
since $n=8$. 

Let $G$ be the graph whose vertices are the rows of $F_1\oplus F_2$
with an edge between rows $v$ and $v'$ if and only if $\{v,v'\}$ is
balanceable. We may assume that $G$ has no perfect matching since otherwise
the rows of $F_1\oplus F_2$ can be partitioned into balanceable pairs.

Let $\mathcal{R}$ be a partition of the rows of $F_1\oplus F_2$ into
pairs, with as few balanceable pairs as possible.
\Cref{cor:setwise} implies that, if $\mathcal{R}$ contains more than
one unbalanceable pair, then the rows of $F_1\oplus F_2$ can be partitioned
into balanceable sets.  Thus, we may assume that $\mathcal{R}$ contains
a pair of rows $\{r,r'\}$ of type $\alpha$, and the rows other than
$r$ and $r'$ induce a clique in $G$.  If there are two
disjoint edges in $G$ incident to $r$ and $r'$, then there exists a
perfect matching in $G$.  Alternatively, if there exists two disjoint
pairs of rows both of which are not balanceable, then it violates our choice
of $\mathcal{R}$.
It follows that $G$ must be the disjoint union of a $K_7$ and
$K_1$, where the isolated vertex is either $r$ or $r'$.

Finally, we show that $G$ cannot be the disjoint union of $K_7$ and
$K_1$. Assume otherwise and let $r$ be the isolated vertex.  That
is, assume that $\{r,v\}$ is not balanceable for all rows $v \neq r$ of
$F_1 \oplus F_2$.  Without loss of generality, we can assume that $r$
has type $\alpha_1$.  Then $\zz(r)\in\{2,3\}$, by
\lref{lem:num00alpha}.  If $\zz(r)=2$, then the remaining rows must
have $1$ or $3$ occurrences of $(0,0)$ each, by
\lref{lem:num00alpha}. However, we then have the contradiction that
the total number of occurrences of $(0,0)$ in $F_1\oplus F_2$ is odd.
Lastly, if $\zz(r)=3$, then the remaining rows each have at least $2$
occurrences of $(0,0)$, by \lref{lem:num00alpha}. However, this would
mean that $F_1\oplus F_2$ has at least $3+7\times 2 = 17$ occurrences
of $(0,0)$, contradicting the fact that $F_1$ and $F_2$ are orthogonal.
This completes the proof.
\end{proof}

\begin{lemma}\label{lem:n=20somealpharow}
Let $n=20$. If there exists a row of type $\alpha$ in $F_1\oplus F_2$,
then the rows of $F_1\oplus F_2$ can be partitioned into balanceable
sets.
\end{lemma}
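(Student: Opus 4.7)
The plan is to follow the template of \lref{lem:nogamcanbalrows}, with extra care taken around the dual-type phenomenon identified in \lref{l:not2difftype}(ii): when $n=20$ a row with $\zz\in\{4,6\}$ can simultaneously be of type $\gamma_2$ and of type $\alpha_1$ or $\alpha_2$. For $n=20$ we have $x=3$ and $y=2$, and $n\equiv2\pmod6$, so type $\beta$ rows do not arise; every unbalanceable pair is of type $\alpha$ or $\gamma$. A pair of type $\alpha_1$ satisfies $\zz\in\{6,7\}$ for each row, with $\zz=6$ being exactly the value admitting the $\alpha_1$--$\gamma_2$ overlap, while a pair of type $\gamma$ requires $\zz\in\{4,5,6\}$ for each row.

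First I would run the greedy-swap argument from \lref{lem:nogamcanbalrows}: by \lref{l:any4rows}, in any four rows some pair balances, so if a partition $\mathcal{R}$ of the rows of $F_1\oplus F_2$ into pairs contains two unbalanceable pairs we can re-pair to strictly reduce their number. Iterating yields a partition with at most one unbalanceable pair $\{r,r'\}$; if none, we are done.

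If $\{r,r'\}$ has type $\alpha$, WLOG $\alpha_1$, then \lref{lem:num00alpha} gives $\zz(r),\zz(r')\in\{6,7\}$. In the subcase $\zz(r)=\zz(r')=7$, neither row can be of type $\gamma$ (by \lref{lem:num00gamma}), and the argument of \lref{lem:nogamcanbalrows} goes through unchanged: \lref{dubya} leaves at least four non-$\alpha_1$ rows, and we swap $r,r'$ with two such rows taken from other pairs of $\mathcal{R}$. In the subcase where, say, $\zz(r)=6$, the row $r$ may additionally be of type $\gamma_2$, and a naive swap-in $v$ could produce a new $\gamma$-type unbalanceable pair $\{r,v\}$. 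Here $\{r,v\}$ of type $\gamma$ forces $\zz(v)=5$, so $v$ is of type $\gamma_1$; I would then apply \lref{lem:numofsimtype} with $v$ playing the role of the $\gamma_1$ row to bound the number of such $v$, and combine this with the $\alpha_1$-count bound of \lref{dubya} to locate enough rows outside both forbidden sets. The companion side for $r'$ (with $\zz(r')=7$, hence purely $\alpha_1$) is handled by the unmodified argument.

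If $\{r,r'\}$ has type $\gamma$, the hypothesis supplies a type-$\alpha$ row $u$. When $\zz(u)\in\{3,7\}$, \lref{lem:num00gamma} precludes $u$ being of type $\gamma$, and $u$ plays the role of the non-$\gamma$ helper in the argument of \lref{lem:nogamcanbalrows}. Otherwise every type-$\alpha$ row has $\zz\in\{4,6\}$; here I would use the orthogonality constraint (total $(0,0)$-count equal to $n^2/4=100$, averaging $5$ per row) together with \lref{dubya} and \lref{lem:numofsimtype} to restrict the global type distribution tightly enough to locate a second re-pairing candidate outside the $\gamma$-forbidden sets. The main obstacle throughout is the dual-type subcase of Case~A with $\zz(r)=6$: verifying that after excluding the at most $16$ rows of type $\alpha_1$ given by \lref{dubya} and the $\gamma_1$ rows that form a bad $\gamma$-exception with $r$ (controlled by \lref{lem:numofsimtype}), at least two usable rows in distinct pairs of $\mathcal{R}$ remain to effect the re-pairing.
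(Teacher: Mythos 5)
Your overall architecture (reduce to one bad pair, then re\mbox{-}pair) matches the paper's, and you correctly isolate the $n=20$ difficulty --- the $\alpha$/$\gamma$ dual-type rows with $\zz\in\{4,6\}$ --- but the proposal has genuine gaps precisely where that difficulty bites. First, the tool you reach for does not do the job you assign it: \lref{lem:numofsimtype} bounds, for a \emph{fixed row of type $\gamma_1$}, the number of partners whose pair with it has type $T_1,T_1^*,T_2,T_2^*$ (or, separately, $T_3,T_3^*,T_4,T_4^*$); it says nothing about the number of $\gamma_1$ rows forming unbalanceable $\gamma$-pairs with a fixed $\gamma_2$ row $r$ with $\zz(r)=6$, and even for a $\gamma_1$ row the two bounds sum to $n+2>n$, so it gives no control on the total number of $\gamma$-partners. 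Your ``exclude the forbidden rows and count what remains'' plan is therefore not carried out and, as stated, cannot be closed with the cited lemmas. Second, your Case B (the surviving bad pair has type $\gamma$ and every type-$\alpha$ row has $\zz\in\{4,6\}$) is left entirely to ``restrict the global type distribution tightly enough,'' which is not an argument.

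The paper avoids both problems with two ideas you do not use. It anchors the given type-$\alpha$ pair $\{r,r'\}$ inside the partition $\mathcal{R}$ from the outset, so that if a second unbalanceable pair $\{v,v'\}$ survives, \Cref{cor:setwise} (a type-$\alpha$ pair together with a type-$\alpha$-or-$\gamma$ pair is balanceable \emph{as a four-set}, via the $(p,q)$-balanceability cancellation of \lref{lem:balsets}) disposes of it immediately --- this also eliminates your Case B. For the remaining situation (only $\{r,r'\}$ bad, type $\alpha_1$, $\zz(r)\in\{6,7\}$, $\zz(r')=7$), instead of counting forbidden rows it uses the pigeonhole principle on the average $\zz$-value of $5$ to produce rows $v'$, $w'$ in distinct pairs with $\zz(v')\le4$ and $\zz(w')\le5$; the numerical constraints of Lemmas~\ref{lem:num00alpha} and~\ref{lem:num00gamma} then force the relevant cross-pairs to be balanceable, and one further application of \lref{l:any4rows} to $\{r,r',v,w\}$ completes the re-pairing. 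You would need to either import \Cref{cor:setwise} and this pigeonhole step, or supply a genuinely new counting argument; as written the proof does not go through.
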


\begin{proof}
Let $\{r,r'\}$ be a pair of rows of type $\alpha$.  As $n=20\equiv2\pmod 6$,
every row that is in a pair that is not balanceable is of type $\alpha$ or
$\gamma$.  Within any four rows of $F_1\oplus F_2$, at least one pair
is balanceable by \lref{l:any4rows}. So, we can partition the rows of
$F_1 \oplus F_2$ into pairs $\mathcal{R}$, such that $\{r,r'\} \in
\mathcal{R}$ and at most one pair other than $\{r,r'\}$ is not
balanceable.  If $\mathcal{R}$ contains a pair $\{v,v'\}$ distinct
from $\{r,r'\}$ that is not balanceable, then $\{v,v'\}$ must be of
type $\alpha$ or $\gamma$.  Therefore by \Cref{cor:setwise}, 
$\{r,r',v,v'\}$ is balanceable and it
follows that the rows of $F_1\oplus F_2$ can be partitioned into
balanceable sets. 
So suppose that $\{r,r'\}$ is the only pair in $\mathcal{R}$ that is
not balanceable.  We show that there is always a way to re-pair the
pairs in $\mathcal{R}$ so that all pairs are balanceable.  Without
loss of generality, we can assume that $\{r,r'\}$ has type $\alpha_1$
and that $\zz(r)\in\{6,7\}$ and $\zz(r')=7$ by \lref{lem:num00alpha}.

The average value for $\zz(\cdot)$ across all rows is 5, so
by the pigeonhole principle there exists distinct pairs
$\{v,v'\},\{w,w'\}\in\mathcal{R}$ such that $\zz(v')\le4$ and
$\zz(w')\le5$.
It follows from \lref{lem:num00alpha} and \lref{lem:num00gamma}
that $\{r,v'\}$, $\{r',v'\}$ and
$\{r',w'\}$ are all balanceable pairs.
If either $\{r,v\}$ or $\{r',v\}$ is balanceable then we can replace
$\{r,r'\}$ and $\{v,v'\}$ by two balanceable pairs and we are done.
So assume that is not the case. It then follows from
\lref{lem:num00alpha} and \lref{lem:num00gamma} that
$\zz(r)=7$ or $\zz(v)=7$. By interchanging $r$ and $v$ if necessary,
we may assume that $\zz(r)=7$. It then follows that $\{r,w'\}$ is balanceable.

Finally, we apply \lref{l:any4rows} to find that there must
be a balanceable pair among $\{r,r',v,w\}$. This pair, together with
two of the balanceable pairs 
$\{r,v'\}$, $\{r,w'\}$, $\{r',v'\}$, $\{r',w'\}$, $\{v,v'\}$ and
$\{w,w'\}$, can be used to replace the pairs $\{r,r'\}$,
$\{v,v'\}$ and $\{w,w'\}$.
\end{proof}

Next, we consider the case where every pair of rows is either
balanceable or of type $\gamma$.  To do so we require the following
simple graph theoretical result.

\begin{lemma}\label{lem:graphwithprop}
Let $G$ be a simple graph with an even number of vertices such that
each subset of three vertices induces at least one edge. Then either
$G$ has a perfect matching or $G$ is the disjoint union of
two odd cliques.
\end{lemma}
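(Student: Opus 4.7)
The condition that every three vertices induce at least one edge is equivalent to saying $G$ has independence number $\alpha(G) \le 2$. My plan is to split on the number of connected components of $G$ and use Tutte's theorem in the connected case.

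First I would observe that $G$ has at most two connected components, since a transversal of three distinct components would be an independent set of size three. If $G$ has exactly two components $C_1$ and $C_2$, then each $C_i$ must be a clique: for any $u,v \in C_i$ and any $w\in C_{3-i}$, the triple $\{u,v,w\}$ must contain an edge, and since $w$ is non-adjacent to every vertex of $C_i$, we must have $uv\in E(G)$. Since $|V(G)|$ is even, $|C_1|$ and $|C_2|$ have the same parity; if both are even, $G$ trivially has a perfect matching, and if both are odd we obtain the second conclusion of the lemma.

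It remains to handle the case where $G$ is connected, for which I would invoke Tutte's theorem: suppose for contradiction that $G$ has no perfect matching, so some $S\subseteq V(G)$ satisfies $o(G-S) > |S|$, where $o(\cdot)$ denotes the number of odd components. By the parity identity $o(G-S) \equiv |V(G)| - |S| \pmod 2$ together with the even parity of $|V(G)|$, we in fact get $o(G-S) \ge |S| + 2$. On the other hand, $\alpha(G-S)\le\alpha(G)\le 2$ forces $G-S$ to have at most two connected components (else three vertices chosen from distinct components would be independent), so $o(G-S)\le 2$. Hence $|S| = 0$ and $o(G)\ge 2$, contradicting the fact that a connected graph of even order has $o(G)=0$.

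The argument is essentially routine once the component analysis is in place; the only mild subtlety is remembering the parity step that strengthens $o(G-S) > |S|$ to $o(G-S) \ge |S| + 2$, which is what makes the bound $\alpha(G-S) \le 2$ bite. I do not anticipate any genuine obstacle — the hypothesis $\alpha(G)\le 2$ is strong enough that every piece of Tutte's criterion collapses immediately.
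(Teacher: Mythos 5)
Your proof is correct and takes essentially the same route as the paper: Tutte's criterion combined with the observation that $\alpha(G)\le 2$ forces every induced subgraph (in particular $G-S$) to have at most two components, plus the parity strengthening of $o(G-S)>|S|$ to $o(G-S)\ge|S|+2$. The only cosmetic difference is that you split off the disconnected case first, whereas the paper runs a single application of Tutte on all of $G$ and extracts the two-odd-cliques conclusion from the case $S=\emptyset$.
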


\begin{proof}
  Suppose that $G$ has no perfect matching. Then by Tutte's criterion there
  exists a set $S$ of vertices whose removal leaves at least $|S|+1$ components
  of odd order. But the given condition means that no induced subgraph of $G$
  has more than $2$ components. Given that
  $G$ has an even number of vertices, the only possibility is that
  $S=\emptyset$ and that $G$ has two components, both of odd order.
  Considering each set of 3 vertices from
  2 different components then shows that each component is a clique. 
\end{proof}

\begin{lemma}\label{lem:onlygammarows}
If every pair of rows of $F_1\oplus F_2$ is either balanceable or of
type $\gamma$, then the rows of $F_1\oplus F_2$ can be partitioned
into balanceable sets.
\end{lemma}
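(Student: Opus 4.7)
The plan is to analyse the balanceability graph $G$ on the rows of $F_1\oplus F_2$, with $\{v,v'\}$ an edge exactly when $\{v,v'\}$ is balanceable. Under the hypothesis every non-edge is a pair of type $\gamma$, and \lref{lem:3gammarows} then guarantees that every triple of rows contains a balanceable pair, so $G$ satisfies the hypothesis of \lref{lem:graphwithprop}. If $G$ has a perfect matching, the required partition into balanceable pairs is immediate. Otherwise $G$ is the disjoint union of two odd cliques $A$ and $B$, every cross-pair is of type $\gamma$, and I may assume $|A|\le|B|$; since both sizes are odd with even sum $n$, this forces $|A|\le n/2-1$.

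Next I would show that each of $A$ and $B$ contains a row of type $\gamma_1$. Suppose for contradiction that every row of $A$ is of type $\gamma_2$. Then by \lref{lem:num00gamma} every cross-pair $\{a,b\}$ forces $b$ to be of type $\gamma_1$, so every row of $B$ is $\gamma_1$. Writing $a_+$ for the number of $a\in A$ with $\zz(a)=2y+2$ and evaluating the total number of $(0,0)$ cells two different ways using orthogonality, one gets $2y|A|+2a_++(2y+1)|B|=n(2y+1)$, which simplifies to $|A|=2a_+$, contradicting the parity of $|A|$. A symmetric argument handles $B$.

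The core task is to produce a balanceable four-row bridge set $S$ containing one row from one clique and three from the other; after removing $S$ both cliques retain an even number of rows and can then be paired internally into balanceable pairs. Choose a $\gamma_1$ row $r\in A$. By \lref{lem:numofsimtype} the rows of $B$ whose type with respect to $r$ lies in $\{T_1,T_1^*,T_2,T_2^*\}$ number at most $n/2+1$, and similarly for the complementary side $\{T_3,T_3^*,T_4,T_4^*\}$. If $|A|=1$ then $|B|=n-1>n/2+1$; if $3\le|A|\le n/2-3$ then $|B|\ge n/2+3>n/2+1$; in either case both sides are represented in $B$, so a suitable $r_1,r_2,r_3\in B$ satisfies condition~(i) of \lref{lem:balpart4rows}. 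In the remaining case $|A|=n/2-1$, $|B|=n/2+1$: if some $\gamma_1$ row $r\in A$ still sees both sides in $B$, condition~(i) again applies; otherwise for every $\gamma_1$ row $r\in A$ all of $B$ lies on one side, so that side contains exactly $n/2+1$ rows, and the ``furthermore'' clause of \lref{lem:numofsimtype} provides at least $n/2$ rows of $T^*$-type, including two with disjoint legitimate swaps, giving a triple that fulfils condition~(ii) of \lref{lem:balpart4rows}.

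The main obstacle is the tight boundary case $|A|=n/2-1$, $|B|=n/2+1$: here the bound of \lref{lem:numofsimtype} is attained by $B$ alone, so condition~(i) of \lref{lem:balpart4rows} can fail; fortunately, it is exactly in this extremal situation that the ``furthermore'' clause of \lref{lem:numofsimtype} supplies enough $T^*$-type rows with disjoint legitimate swaps to invoke condition~(ii) instead.
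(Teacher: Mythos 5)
Your proposal is correct and follows essentially the same route as the paper: the balanceability graph together with Lemmas~$\ref{lem:3gammarows}$ and~$\ref{lem:graphwithprop}$ reduces to two odd cliques, a parity count of $(0,0)$ occurrences (using orthogonality) produces a $\gamma_1$ row in the smaller clique, and Lemmas~$\ref{lem:numofsimtype}$ and~$\ref{lem:balpart4rows}$ then yield a balanceable four-row bridge, with the extremal case $|B|=n/2+1$ handled via the ``furthermore'' clause and condition~(ii). The only differences are cosmetic: your refined count $2a_+=|A|$ replaces the paper's coarser mod-$2$ argument, and your explicit case split on $|A|$ replaces the paper's ``without loss of generality all of $K_{n-a}$ lies on one side'' step.
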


\begin{proof}
There is nothing to prove unless some rows have type $\gamma$, so we
may assume that $ n \equiv 4 \pmod 8$.
Let $G$ be a graph with rows in $F_1\oplus F_2$ as vertices and an
edge between $r$ and $r'$ if and only if $\{r,r'\}$ is balanceable.
A perfect matching in $G$ corresponds to a partition of the
rows of $F_1\oplus F_2$ into balanceable pairs. So, by
Lemmas~\ref{lem:3gammarows} and~\ref{lem:graphwithprop}, we are done
unless $G$ is the union of two disjoint odd cliques.  So, without loss
of generality, let $K_a$ and $K_{n-a}$ be the connected components of
$G$, with $a < n-a$ and $a$ odd.  Suppose there exists $4$ rows
$r,r_1,r_2,r_3$, with $r$ in $K_a$ and $r_1,r_2,r_3 $ in $K_{n-a}$,
such that $\{r,r_1,r_2,r_3\}$ is balanceable. Then the induced
subgraph of $G$ on the remaining rows forms two disjoint even cliques
and so the remaining rows can be partitioned into balanceable pairs.
It then follows that the rows of $F_1\oplus F_2$ can be
partitioned into balanceable sets.

So it suffices to show that such a set of four rows exists. Note that
as $G$ is the disjoint union of two cliques, every row is in some pair
that is not balanceable. So, by assumption, every row is of type $\gamma$.
We claim that there is at least one row in $K_a$ that is of
type $\gamma_1$. If there were no such row, then every row in $K_a$ is
of type $\gamma_2$. Inspecting \lref{lem:num00gamma}, we see that
any two $\gamma_2$ rows
form a balanceable pair, so $K_a$ contains every $\gamma_2$ row.
Also, every $\gamma_2$ row contains an odd number of occurrences of $(0,0)$,
while any $\gamma_1$ row contains an even number of occurrences of
$(0,0)$.  In total, there are $n^2/4 \equiv 0 \pmod{2}$ occurrences of
$(0,0)$ in the rows of $F_1\oplus F_2$.  We conclude
that $K_a$ contains a row $r$ of type $\gamma_1$.

If there are rows $r_1,r_2$ and $r_3$ in $K_{n-a}$ such that
$A(r,r_1)$ and $A(r,r_2)$ are of types $T_i$ or $T^*_i$ and $T_j$ or
$T^*_j$ with $i\in\{1,2\}$ and $j \in \{3,4\}$, then by
\lref{lem:balpart4rows},
$\{r,r_1,r_2,r_3\}$ is balanceable.  So we can assume that, without
loss of generality, every row in $K_{n-a}$ is of type $T_1,T^*_1,T_2$
or $T^*_2$, with respect to $r$.  By \lref{lem:numofsimtype}, if
follows that $n-a=n/2+1$ and there are at least two rows $r_1$ and
$r_2$, such that $r_1$ and $r_2$ are of types $T^*_1$ or $T^*_2$ with
respect to $r$ and $r_1$ and $r_2$ have disjoint legitimate
swaps. Choose any $r_3$ in $K_{n-a}$ distinct from $r_1$ and $r_2$
that is of type $T^*_1$ or $T^*_2$ with respect to $r$; such a row
exists since at least $n/2\geq 6$ rows in $K_{n-a}$ are type $T^*_1$
or $T^*_2$ with respect to $r$, by \lref{lem:numofsimtype}. Then by
\lref{lem:balpart4rows}, $\{r,r_1,r_2,r_3\}$ is balanceable. This
completes the proof.
\end{proof}

We can now prove \tref{t:main}.

\begin{proof}[Proof of \tref{t:main}]
Let $R$ be the set of rows of $F_1\oplus F_2$.  By
\lref{lem:partofbalsetssuff}, it suffices to show that $R$ can be
partitioned into balanceable sets.  If all non-balanceable pairs from
$R$ are of type $\gamma$ then we are done, by
\lref{lem:onlygammarows}. So, assume that $\{r,r'\}\subset R$ is a
non-balanceable pair not of type $\gamma$, from which it follows,
without loss of generality, that $r$ is not of type $\gamma$.  If
$n\notin\{8,20\}$ then \lref{lem:nogamcanbalrows} implies that we can
partition $R$ into balanceable sets. Meanwhile, if $n=8$, then we can
partition $R$ into balanceable sets, by \lref{lem:n=8somealpharow}.
Finally, if $n=20$, then $n\equiv2\pmod 6$, so $r$ must be of type
$\alpha$, by Lemma~\ref{lem:exceptions}. Hence,
\lref{lem:n=20somealpharow} completes the proof.
\end{proof}

It may be possible to prove the analogue of \tref{t:main} for
$n\equiv2\pmod4$ by similar methods. However, new configurations arise
in (the analogue of) \lref{lem:exceptions}, making the subsequent
analysis substantially more complicated.  It was important for our
proof that only certain rows can be in non-balanceable pairs (as shown
by Lemmas \ref{lem:num00alpha}, \ref{lem:num00beta} and
\ref{lem:num00gamma}). However, if we assume that $n\equiv2\pmod4$,
then \emph{any} pair of rows in $F_1 \oplus F_2$ that are
complementary in at least one of $F_1$ and $F_2$ is not balanceable.
Let $r$ be a row in $F_1 \oplus F_2$ and let $r'$ be the row which
agrees with $r$ in $F_1$ and is complementary to $r$ in $F_2$.  Let
$s$ and $s'$ be the complementary rows to $r$ and $r'$,
respectively. Then no pair of $\{r,r',s,s'\}$ is balanceable.  This
means that the analogue of \lref{l:any4rows} fails for
$n\equiv2\pmod4$.

\subsection*{Acknowledgements}
This work was supported in part by Australian Research Council grant
DP150100506.

  \let\oldthebibliography=\thebibliography
  \let\endoldthebibliography=\endthebibliography
  \renewenvironment{thebibliography}[1]{%
    \begin{oldthebibliography}{#1}%
      \setlength{\parskip}{0.4ex plus 0.1ex minus 0.1ex}%
      \setlength{\itemsep}{0.4ex plus 0.1ex minus 0.1ex}%
  }%
  {%
    \end{oldthebibliography}%
  }

\end{document}